\newcommand{\assign}{:=}
\newcommand{\backassign}{=:}
\newcommand{\cdummy}{\cdot}
\newcommand{\nobracket}{}
\newcommand{\nosymbol}{}
\newcommand{\point}{.}
\newcommand{\tmmathbf}[1]{\ensuremath{\boldsymbol{#1}}}
\newcommand{\tmop}[1]{\ensuremath{\operatorname{#1}}}
\newcommand{\tmtextbf}[1]{\text{{\bfseries{#1}}}}
\newcommand{\tmtextit}[1]{\text{{\itshape{#1}}}}
\newenvironment{proof}{\noindent\textbf{Proof\ }}{\hspace*{\fill}$\Box$\medskip}
\newtheorem{theorem}{Theorem}[section]
\newtheorem{corollary}[theorem]{Corollary}
\newtheorem{definition}[theorem]{Definition}
\newtheorem{proposition}[theorem]{Proposition}
{\theorembodyfont{\rmfamily}\newtheorem{remark}[theorem]{Remark}}
\def\div{\mathord{{\rm div}}}
\newcommand{\rmb}[1]{\textcolor{black}{#1}}
\numberwithin{equation}{section}
\begin{document}

\title{Rough analysis of two scale systems}

\author[1]{Arnaud Debussche}
\author[2]{Martina Hofmanov{\'a}}
\affil[1]{\small Univ Rennes, CNRS, IRMAR - UMR 6625, F-35000 Rennes, France. \href{mailto:arnaud.debussche@ens-rennes.fr}{arnaud.debussche@ens-rennes.fr}}
  \affil[2]{\small Fakult\"at f\"ur Mathematik, Universit\"at Bielefeld, Postfach 10 01 31, D-33501 Bielefeld, Germany. \href{mailto:hofmanova@math.uni-bielefeld.de}{hofmanova@math.uni-bielefeld.de}}
\maketitle

\begin{abstract}
 We address  a slow-fast system of coupled three dimensional
  Navier--Stokes equations where the fast component is perturbed by an
  additive Brownian noise. By means of the rough path theory, we establish the
  convergence in law of the slow component towards a Navier--Stokes system
  with an It{\^o}--Stokes drift and a rough path driven transport noise. This
  gives an alternative, more general and direct proof to {\cite{DP22}}. Notably, the limiting rough path is identified as a geometric rough
  path, which does not necessarily coincide with the Stratonovich lift of the
  Brownian motion.
  
  \tmtextbf{Keywords:} Navier--Stokes equations, transport noise,
  It{\^o}--Stokes drift, rough paths
\end{abstract}

\

{\tableofcontents}

\section{Introduction}

Fluid models have numerous applications, in particular in the modeling of climate, oceans, atmosphere etc. These systems are often described mathematically by a system of partial differential equations. Increasingly,  we
recognize the importance of random effects influencing these phenomena and the necessity of incorporating such effects for both qualitative and quantitative predictions. 

In \cite{BCCLM20,M14}, in a similar spirit as \cite{MR04}, the classical derivation of fluid models from Reynold Transport Theorem has been revisited. A stochastic component was added to the velocity field to represent small scales that are challenging to account for in numerical simulations, leading to the derivation of a Stochastic Reynolds Transport Theorem.  Applying this to  basic physical balance laws yields stochastic versions of fluid equations, known as Location Uncertainty (LU) models. Interestingly, these equations contain a transport noise and an additional advection term induced by  the noise, referred to as the It\^o-Stokes drift. Other derivations of stochastic fluid equations have also been proposed. For instance, in \cite{H15}, a stochastic velocity component was introduced, and a variational principle was employed to  derive a stochastic version of the Euler equations as critical points of a stochastic energy. This approach, known as the SALT model -- Stochastic Advection by Lie Transport --  also incorporates  transport noise but differs from LU models in that it preserves more geometric quantities. Importantly, in both models, the noise is interpreted in the Stratonovich sense. 

From a mathematical perspective, the stochastic Navier--Stokes equations with transport noise represent a timely and active area of research. These equations were first studied in the seminal work \cite{BrCaFl92} and have recently gained further attention. A significant discovery was made in \cite{Ga20}, where it was shown that one can construct sequences of noise covariance operators such that the noise vanishes in the limit, while the It\^o--Stratonovich correction persists. The limiting deterministic equation retains a trace of the vanishing noise, manifesting as an additional elliptic operator termed enhanced viscosity. This phenomenon has been further explored in \cite{FlGaLu21c,FlLu21}.

On another front, multiscale approaches have been developed to model   small and large scales differently. For instance, in \cite{MTE01}, small-scale terms in the equations were neglected and replaced by noise, though this analysis was restricted to finite-dimensional models. Inspired by this, \cite{FP21, FP22} examined a multiscale system of Euler equations, proving convergence to the Euler equations with transport noise. Similarly, in this work, we study a slow-fast system parameterized by a small scaling parameter $\varepsilon \in (0, 1)$:
\begin{equation}
  \partial_t u^{\varepsilon} = A u^{\varepsilon} + b (u^{\varepsilon} +
  v^{\varepsilon}, u^{\varepsilon}), \label{eq:uintro}
\end{equation}
\begin{equation}
  d v^{\varepsilon} = A v^{\varepsilon} d t + b (u^{\varepsilon} +
  v^{\varepsilon}, v^{\varepsilon}) d t + \varepsilon^{- 1} C v^{\varepsilon}
  d t + \varepsilon^{- 1} Q^{1 / 2} d W, \label{eq:v}
\end{equation}
where $A$, $C$ are linear operators, $b$ a nonlinear term, and the  second equation is driven by a cylindrical Wiener process $W$ on some probability space $(\Omega, \mathcal{F},
\tmmathbf{P})$, with a  covariance operator $Q$.  Additional assumptions on the operators and parameters will be presented in Section \ref{s:21}.

Our primary goal is to rigorously analyze the asymptotic behavior of system \eqref{eq:uintro},
	\eqref{eq:v} as $\varepsilon \to 0$, focusing on the interactions between the large- and small-scale components under stochastic perturbations. In particular, we derive the limit of \eqref{eq:uintro}
as a stochastic dynamics driven by a certain transport noise, along with the It\^o--Stokes drift. The limiting behavior we obtain emphasizes the role of stochastic transport noise as an effective model reduction for systems where large-scale dynamics are influenced by fast, highly oscillatory components.  

A key example  is the coupled 
Navier--Stokes system:
\begin{equation}\label{eq:i1}
  \partial_t u^{\varepsilon} + (u^{\varepsilon} +
  v^{\varepsilon})\cdot\nabla u^{\varepsilon} +\nabla p^{\varepsilon}= \nu\Delta u^{\varepsilon} ,\qquad \mbox{div}\, u^\varepsilon = 0,
\end{equation}
\begin{equation}\label{eq:i2}
  d v^{\varepsilon} + (u^{\varepsilon} +
  v^{\varepsilon})\cdot\nabla v^{\varepsilon} d t +\nabla q^{\varepsilon} dt = \nu\Delta v^{\varepsilon} d t + \varepsilon^{- 1} C v^{\varepsilon}
  d t + \varepsilon^{- 1} d \mathcal{W},\qquad \mbox{div}\, v^\varepsilon = 0,
\end{equation}
where $t \in [0,T]$, $u^\varepsilon$ and $ v^\varepsilon$ represent velocity fields on a bounded domain $\mathcal O\subset \mathbb R^3$. The pressure fields $p^\varepsilon$ and $q^\varepsilon$ ensure the validity of  the divergence-free conditions $\mbox{div}\, u^\epsilon=\mbox{div}\, v^\epsilon =0$, and $\mathcal{W}$ is a Wiener process on $[L^2(\mathcal O)]^3$. The parameter $\varepsilon \in (0, 1)$ denotes a small scaling, and  $\nu>0$ is the viscosity. These equations are supplemented with boundary conditions which can be Dirichlet or, if $\mathcal O$ is the torus, periodic. For simplicity,   we consider  periodic boundary conditions.
Therefore, \eqref{eq:i1}, \eqref{eq:i2} is of the form \eqref{eq:uintro}, \eqref{eq:v} provided  $A$ is the Stokes operator, $b$ is the Navier--Stokes nonlinearity, and $Q^{1 / 2}$ is the covariance operator of the divergence-free part of $\mathcal{W}$. We refer to Section \ref{s:21} for further details.

This system  represents a decomposition of the
Navier--Stokes equations into a slow-varying, large-scale component
$u^{\varepsilon}$ and a fast-varying, small-scale component
$v^{\varepsilon}$. The fast component is driven by a  large additive stochastic
noise and, for the particularly relevant case of $C = - \tmop{Id}$, it experiences a large friction, which can also be interpreted as a scale separation. The noise amplitude is chosen such  that, formally, as $\varepsilon\to0$, the fast component $v^\varepsilon$ converges to the white noise $Q^{1/2}dW$. Consequently,  the term $(v^\varepsilon \cdot \nabla) u^\varepsilon$ in the large-scale equation \eqref{eq:i1} is expected to converge to a transport noise. However, as we shall demonstrate, this formal reasoning is incomplete due to the emergence of  the It\^o--Stokes drift which arises in the limiting system.

In \cite{FP21, FP22}, the non viscous case -- $\nu=0$ -- was studied and the limit $\varepsilon\to 0$  established thanks to a Wong-Zakai type argument.   It has to be noted that in \cite{FP21, FP22}, the It\^o--Stokes drift is not present due to an extra assumption on the noise needed in their argument.

The asymptotic behavior of the system \eqref{eq:uintro}, \eqref{eq:v} as $\varepsilon \rightarrow 0$
was recently studied  in {\cite{DP22}}.
The analysis in 
{\cite{DP22}} is based  on the perturbed test function method
introduced in {\cite{PaStVa77}}, which has since been widely used in the literature. The approach of {\cite{DP22}} relies on several assumptions: certain
regularity of the operator $Q$, symmetry and certain smoothing properties of $C$ (as e.g. $C = - (- \Delta)^{\gamma / 2}$ for some $\gamma > 1
/ 4$), and commutativity of $C$ and $Q$. Under these conditions, they prove that 
$u^{\varepsilon}$ converges in law to a solution of
\[ d u = A u d t + b (u, u) d t + b \left( \overline{r}, u \right) d t + b ((-
   C)^{- 1} Q^{1 / 2} \circ d W, u), \]
where $\bar{r}$ denotes  the It{\^o}--Stokes velocity as given in
\eqref{eq:itostokes} below. Particularly, the stochastic integral is
understood in Stratonovich's sense.

In contrast to the approach in \cite{DP22}, our work introduces a more direct method to address the limiting behavior as $\varepsilon\to0$. This approach allows us to significantly relax the assumptions on the operator $C$, encompassing physically relevant cases such as $C=-\tmop{Id}$.
 Moreover, we do not require $C$ to be symmetric, nor do we assume it commutes with $Q$. Rather surprisingly, in such situations, the stochastic
integral in the limit equation is no longer Stratonovich; instead, a non-trivial finite-variation perturbation arises. It is defined explicitly in terms of the operators $C$ and $Q$. We interpret the resulting stochastic integral within the framework of rough paths.
More specifically, our analysis is grounded in the theory of unbounded rough drivers, introduced in \cite{BG17} and further developed in \cite{DGHT19, FHLN20, MR3918521, HLN21}. We extend this theory to accommodate infinite-dimensional rough paths, which may have broader applications. As is typical in rough path theory, our approach imposes stricter regularity conditions on $Q$ compared to purely probabilistic methods such as those in \cite{DP22}.

In a parallel development, certain concepts from the present article were integrated with those from \cite{DP22} and further advanced to establish anomalous and total dissipation in passive scalar equations and Navier--Stokes equations advected by solutions to the randomly forced Navier--Stokes equations, as presented in \cite{HPZZ23b}.

\subsection{Summary of main  results and ideas}

We now provide a brief outline of our method. Unlike the approach taken in {\cite{DP22}}, we introduce a decomposition of the form $v^{\varepsilon} = \varepsilon^{- 1 / 2}
w^{\varepsilon} + r^{\varepsilon}$ where the components evolve according to the following system of equations:
\begin{equation}
  \partial_t u^{\varepsilon} = A u^{\varepsilon} + b (u^{\varepsilon} +
  \varepsilon^{- 1 / 2} w^{\varepsilon} + r^{\varepsilon}, u^{\varepsilon}),
  \label{eq:u}
\end{equation}
\begin{equation}
  d w^{\varepsilon} = \varepsilon^{- 1} C w^{\varepsilon} d t + \varepsilon^{-
  1 / 2} Q^{1 / 2} d W, \qquad w^{\varepsilon}_0 = 0, \label{eq:w}
\end{equation}
\begin{equation}
  \partial_t r^{\varepsilon} = \varepsilon^{- 1} C r^{\varepsilon} + A
  (\varepsilon^{- 1 / 2} w^{\varepsilon} + r^{\varepsilon}) + b
  (u^{\varepsilon} + \varepsilon^{- 1 / 2} w^{\varepsilon} + r^{\varepsilon},
  \varepsilon^{- 1 / 2} w^{\varepsilon} + r^{\varepsilon}), \qquad
  r^{\varepsilon}_0 = v^{\varepsilon}_0 . \label{eq:r}
\end{equation}

We do not present the details of the construction of solutions for this system at each fixed level of  $\varepsilon \in (0, 1)$, as these can be derived using classical techniques similar to those in \cite{DP22}. Specifically, the rescaled Ornstein--Uhlenbeck process  $w^{\varepsilon}$ exists on any
probability space that includes a $Q$-Wiener process $Q^{1 / 2} W$, and is uniquely determined and measurable with respect to $Q^{1 / 2} W$. Moreover, it possesses a unique invariant measure $\mu$, which is  independent of $\varepsilon$. See \cite{DPZ96,DPZa14} for further details.

Additionally, for each fixed $\varepsilon \in (0, 1)$ there exists a probabilistically and analytically weak solution to the  system \eqref{eq:u}, \eqref{eq:w}, \eqref{eq:r}. In other words, for every $\varepsilon \in (0, 1)$
there exists a probability space $(\Omega^{\varepsilon},
\mathcal{F}^{\varepsilon}, \tmmathbf{P}^{\varepsilon})$, a $Q$-Wiener process
$Q^{1 / 2} W^{\varepsilon}$ and processes $u^{\varepsilon}, w^{\varepsilon},
r^{\varepsilon}$ that solve \eqref{eq:u}, \eqref{eq:w}, \eqref{eq:r} in the
analytically weak sense. The proof of existence is based on a Galerkin approximation combined with the method of stochastic compactness, as was employed, for instance, in \cite{FG95} for the stochastic Navier--Stokes equations.

It is important to note that in the framework of probabilistically weak solutions, both the probability space and the Wiener process form part of the solution itself. This distinguishes probabilistically weak solutions from probabilistically strong solutions, where the probabilistic elements are specified in advance. A typical example of the latter is the Ornstein--Uhlenbeck process defined by \eqref{eq:w}, as discussed above. In general, the existence of probabilistically strong solutions can only be ensured for problems where uniqueness holds. One notable exception is the method of convex integration, which allows for the construction of non-unique solutions in various fluid dynamics models (see, for instance, \cite{HZZ19}). However, for the system \eqref{eq:u}, \eqref{eq:w}, \eqref{eq:r}, the question of uniqueness remains unresolved and may not hold, as suggested by recent studies \cite{ABC22,BV19a,HZZ19}. This uncertainty regarding uniqueness is the primary reason we focus our analysis on probabilistically weak solutions.

With the existence of solutions for each fixed level  $\varepsilon$, our next objective is to reformulate the system \eqref{eq:u}, \eqref{eq:w}, \eqref{eq:r} in the framework of rough path theory, enabling us to derive estimates that are uniform in $\varepsilon$. This step is detailed in Section~\ref{s:rr}, following the introduction of the basic concepts of rough path theory in Section~\ref{s:RP}. For a detailed introduction to the theory of rough paths, we also refer the reader to the monograph \cite{FH14}.

Our ultimate aim is to handle the singularity in \eqref{eq:u} caused by the term of order $\varepsilon^{-1/2}$.
 Specifically, this term is expected to converge to a stochastic integral, and we aim to quantify this convergence within the framework of rough paths. To this end, we introduce the process 
  $y^{\varepsilon}$, defined by
\begin{equation}
  \frac{d y^{\varepsilon}}{ d t} = \varepsilon^{- 1 / 2} w^{\varepsilon}, \qquad
  y^{\varepsilon}_0 = 0, \label{eq:y}
\end{equation}
so that the critical term in \eqref{eq:u} can be expressed as
$
b(\varepsilon^{- 1 / 2} w^{\varepsilon},u^{\varepsilon})=b\left({d y^{\varepsilon}}/{ d t},u^{\varepsilon}\right).
$
This is where rough path theory becomes crucial: although the time derivatives ${d y^{\varepsilon}}/{ d t}$	
  exhibit singular behavior and blow up as $\varepsilon \to 0$, for instance in finite variation, it is still possible to establish their convergence in a suitably weaker sense. This convergence is nonetheless strong enough to allow us to pass to the limit in the entire system \eqref{eq:u}, \eqref{eq:w}, \eqref{eq:r}.

To this end, we define $(Y^{\varepsilon, 1},Y^{\varepsilon, 2})$ as the canonical rough path lift of $y^{\varepsilon}$ corresponding to the first- and second-order  iterated integrals:
\[ Y^{\varepsilon, 1}_{s t} \assign  
    \int_s^{t} \varepsilon^{- 1 / 2} w_r^{\varepsilon} d r
   , \qquad Y^{\varepsilon, 2}_{s t} \assign \int_s^t (
   y^{\varepsilon}_{r}-y^{\varepsilon}_{s}) \otimes d y_r^{\varepsilon}, \qquad 0 \leqslant s< t
   \leqslant T. \]
This object is well-defined since $y^{\varepsilon}$ exhibits sufficient regularity in time. Our goal is to establish that, in an appropriate rough path framework, 
$(Y^{\varepsilon, 1},Y^{\varepsilon, 2})$ converges to a well-defined limit.
More specifically, the convergence occurs in the space of  $\alpha$-H\"older continuous rough paths, with values in a suitable Sobolev space, for every $\alpha\in (0,1/2)$. Implicitly, the first component $Y^{\varepsilon, 1}$ is $\alpha$-H\"older continuous in time, while the second component $Y^{\varepsilon, 2}$ enjoys $2\alpha$-H\"older continuity in time, uniformly with respect to $\varepsilon$. 
Furthermore, we  identify the limit rough path, which, rather surprisingly, deviates from the classical Stratonovich lift of a Wiener process.

Our first main result can be stated as  follows, with the precise formulation -- including the function spaces -- presented  in Theorem~\ref{l:11}. The detailed proof can be found in  Section~\ref{s:2}.

\begin{theorem}\label{thm:main1}
\begin{enumerate}
\item The canonical rough path lift $(Y^{\varepsilon, 1},Y^{\varepsilon, 2})$ converges as $\varepsilon\to 0$ a.s.  in the sense of rough paths to a
 rough path lift $(B^1, B^2)$ of the Wiener process $B\assign (- C)^{- 1} Q^{1 / 2} W$. The second component is given in terms of
  It{\^o}'s stochastic integration as
  \begin{equation*}
    B^2_{s t} \assign \int_s^t \delta B_{s r} \otimes d B_r + (t - s) \int w
    \otimes (- C)^{- 1} w d \mu (w), 
  \end{equation*}
  with $\mu$ being the unique invariant measure of \eqref{eq:w}. Alternatively, $B^2$ is given in terms of Stratonovich's
  stochastic integration as
  \begin{equation*}
    B^2_{s t} = \int_s^t \delta B_{s r} \otimes \circ d B_r + (t - s) M,
  \end{equation*}
  where $M$ is antisymmetric and explicitly defined in terms of operators $C$ and $Q$.
  \item If the operators $C$ and $Q$ commute and $C$ is symmetric
then $M=0$ and $(B^1, B^2)$  is the Stratonovich lift of $B$.
  \end{enumerate}

\end{theorem}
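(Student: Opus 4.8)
The plan is to exploit the explicit Ornstein--Uhlenbeck structure of $w^{\varepsilon}$ together with the scaling \eqref{eq:y}, thereby reducing the whole statement to a quantitative ergodic theorem for the fast variable. Rearranging \eqref{eq:w} and integrating in time gives the exact identity
\[
 \int_s^t \varepsilon^{-1/2} w^{\varepsilon}_r\, d r \;=\; \varepsilon^{1/2} C^{-1} \delta w^{\varepsilon}_{st} + \delta B_{st}, \qquad B_t \assign (-C)^{-1} Q^{1/2} W_t,
\]
so that, using $w^{\varepsilon}_0 = 0$, one gets $y^{\varepsilon}_t = a^{\varepsilon}_t + B_t$ with $a^{\varepsilon}_t \assign \varepsilon^{1/2} C^{-1} w^{\varepsilon}_t$; equivalently $d y^{\varepsilon} = d a^{\varepsilon} + d B$ with $d a^{\varepsilon} = \varepsilon^{1/2} C^{-1} d w^{\varepsilon}$, the martingale parts of $a^{\varepsilon}$ and $B$ cancelling. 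Since the invariant measure $\mu$ is independent of $\varepsilon$, the Gaussian process $w^{\varepsilon}$ has all moments bounded uniformly in $\varepsilon$ and $t$ in the relevant Sobolev norm, whence $a^{\varepsilon} \to 0$ in $C^{\alpha}$ for every $\alpha < 1/2$ with a polynomial rate in $\varepsilon$; this already gives $Y^{\varepsilon, 1} = \delta y^{\varepsilon} = \delta a^{\varepsilon} + \delta B \to \delta B$, which we take as $B^1$.

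For the second level, $y^{\varepsilon}$ is $C^1$ in time, so $Y^{\varepsilon, 2}_{st} = \int_s^t \delta y^{\varepsilon}_{sr} \otimes d y^{\varepsilon}_r$ is an ordinary Riemann--Stieltjes integral; rewriting $d y^{\varepsilon} = d a^{\varepsilon} + d B$ as It\^o differentials and splitting,
\[
 Y^{\varepsilon, 2}_{st} \;=\; \int_s^t \delta B_{sr} \otimes d B_r \;+\; \int_s^t \delta a^{\varepsilon}_{sr} \otimes d B_r \;+\; \int_s^t \delta y^{\varepsilon}_{sr} \otimes d a^{\varepsilon}_r .
\]
The first term is the desired It\^o iterated integral. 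The second is an It\^o integral whose integrand is $O(\varepsilon^{1/2})$, hence negligible in $L^p$ with rate by the It\^o isometry and Burkholder--Davis--Gundy. For the third, It\^o integration by parts against $w^{\varepsilon}$ (with no bracket correction, $y^{\varepsilon}$ being of finite variation) gives
\[
 \int_s^t \delta y^{\varepsilon}_{sr} \otimes d a^{\varepsilon}_r \;=\; \varepsilon^{1/2} (\tmop{Id} \otimes C^{-1}) \bigl( \delta y^{\varepsilon}_{st} \otimes w^{\varepsilon}_t \bigr) \;-\; (\tmop{Id} \otimes C^{-1}) \int_s^t w^{\varepsilon}_r \otimes w^{\varepsilon}_r\, d r,
\]
where the boundary term is $O(\varepsilon^{1/2})$, while $\int_s^t w^{\varepsilon}_r \otimes w^{\varepsilon}_r\, d r \to (t - s)\, \Sigma$ with $\Sigma \assign \int w \otimes w\, d \mu(w)$ the invariant covariance, by the ergodic theorem for the fast process; since $(\tmop{Id} \otimes C^{-1}) \Sigma = \int w \otimes C^{-1} w\, d \mu(w)$, this last term converges to $-(t-s)\int w\otimes C^{-1}w\,d\mu(w) = (t - s) \int w \otimes (-C)^{-1} w\, d \mu(w)$. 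Collecting the three contributions identifies the limit with $B^2_{st}$ in the stated It\^o form. All estimates above are quantitative in $\varepsilon$ with the sharp H\"older exponents in $|t - s|$ ($\alpha$ at the first level, $2\alpha$ at the second); feeding them into a Kolmogorov-type continuity criterion for rough paths valued in the relevant Sobolev space (Section~\ref{s:RP}) together with a Borel--Cantelli argument upgrades the convergence to almost sure convergence in the $\alpha$-H\"older rough path topology, which is part~(1) in its precise form Theorem~\ref{l:11}.

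For the It\^o--Stratonovich conversion, $\int_s^t \delta B_{sr} \otimes d B_r = \int_s^t \delta B_{sr} \otimes \circ\, d B_r - \tfrac{1}{2}(t - s)\, C^{-1} Q (C^*)^{-1}$, the last being half the bracket of $B$; hence $B^2_{st} = \int_s^t \delta B_{sr} \otimes \circ\, d B_r + (t - s) M$ with
\[
 M \;\assign\; \int w \otimes (-C)^{-1} w\, d \mu(w) - \tfrac{1}{2} C^{-1} Q (C^*)^{-1} \;=\; -\Sigma (C^*)^{-1} - \tfrac{1}{2} C^{-1} Q (C^*)^{-1} .
\]
Using that $\Sigma$ solves the Lyapunov equation $C \Sigma + \Sigma C^* = -Q$, substitution reduces this to $M = \tfrac{1}{2} \bigl( C^{-1} \Sigma - \Sigma (C^*)^{-1} \bigr)$, which, since $\Sigma = \Sigma^*$, satisfies $M^* = -M$; this is the Stratonovich form and the antisymmetry of $M$. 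If moreover $C^* = C$ and $C Q = Q C$, then $\Sigma = \int_0^{\infty} e^{2 s C} Q\, d s$ commutes with $C$, hence with $C^{-1}$, so $M = \tfrac{1}{2}(C^{-1} \Sigma - \Sigma C^{-1}) = 0$ and $(B^1, B^2)$ is the Stratonovich lift of $B$, proving part~(2).

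The main obstacle is the quantitative ergodic step: one needs $L^p$ bounds, with a genuine rate in $\varepsilon$ and the sharp power of $|t - s|$ \emph{uniformly} over $0 \leqslant s < t \leqslant T$, on the fluctuations of $\int_s^t w^{\varepsilon}_r \otimes w^{\varepsilon}_r\, d r$ around $(t - s) \Sigma$ in the appropriate tensor--Sobolev norm, since this is precisely what drives the Kolmogorov criterion and the passage from convergence in probability to almost sure rough path convergence. This rests on the exponential mixing of the rescaled Ornstein--Uhlenbeck semigroup (effective rate $\varepsilon^{-1}$), on Gaussian hypercontractivity to reach high moments, and on a careful treatment of the boundary layer near $s = 0$ where $w^{\varepsilon}$ has not yet equilibrated. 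A more routine but still substantial layer is the setup of H\"older rough paths and of the associated Kolmogorov criterion for infinite-dimensional, Sobolev-valued processes, with the tensor norm dictated by the unbounded rough driver framework of Section~\ref{s:RP}.
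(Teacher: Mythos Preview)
Your proposal is correct and follows essentially the same line as the paper: the key identity $y^{\varepsilon}=B+\varepsilon^{1/2}C^{-1}w^{\varepsilon}$, the integration-by-parts decomposition of $Y^{\varepsilon,2}$ into an It\^o integral, a boundary term of order $\varepsilon^{1/2}$, and the ergodic average $\int_s^t w^{\varepsilon}_r\otimes (-C)^{-1}w^{\varepsilon}_r\,dr$, the quantitative ergodic theorem for the rescaled Ornstein--Uhlenbeck process, and the Kolmogorov criterion for rough paths are all exactly what the paper does in Section~\ref{s:2}. Your Stratonovich computation via the Lyapunov identity $C\Sigma+\Sigma C^{\ast}=-Q$ is equivalent to the paper's integration by parts on $Q_{\infty}=\int_0^{\infty}e^{Ct}Qe^{C^{\ast}t}\,dt$; both yield $M=\tfrac12\bigl((-C)^{-1}Q_{\infty}-Q_{\infty}((-C)^{-1})^{\ast}\bigr)$ and hence $M^{\ast}=-M$.

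The one substantive difference is how the almost sure convergence is obtained. You propose Borel--Cantelli on the $L^{p}$-rates in $\varepsilon$; the paper instead runs a \emph{second} Kolmogorov argument, this time in the variable $\varepsilon\in[0,1]$, proving that $\varepsilon\mapsto (Y^{\varepsilon,1},Y^{\varepsilon,2})$ is a.s.\ H\"older continuous as a map into $\mathcal{C}^{\alpha}([0,T];H^{\sigma})$ (Step~3 of the proof of Theorem~\ref{l:11}, relying on the extra regularity assumptions \eqref{eq:new}, \eqref{eq:new2}). Both give a.s.\ rough path convergence, but the paper's route additionally delivers the pathwise uniform bound $\sup_{\varepsilon\in(0,1)}\|(Y^{\varepsilon,1},Y^{\varepsilon,2})\|_{\mathcal{C}^{\alpha}}\in L^{q}(\Omega)$ of \eqref{eq:new5}, which is later used (Remark~\ref{r:11}) to control the unbounded rough drivers $\mathbb{A}^{\varepsilon}$ uniformly in $\varepsilon$ and $\omega$; a plain Borel--Cantelli argument on a subsequence does not directly yield this supremum-in-$\varepsilon$ control.
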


We observe  that the rough path lift of $B$ obtained in the limit generally differs from the Stratonovich lift via
an antisymmetric finite variation part, denoted by $M$. Nevertheless, due to the antisymmetry of $M$, the lift $(B^{1},B^{2})$ remains a geometric rough path, as expected for limits of canonical lifts of smooth paths. 
Under the assumptions of point \emph{2.}, we  recover the result of \cite{DP22}, albeit via  a completely different approach. Moreover, if $Q$ is the identity on $\mathbb{R}^{d}$ and $-C$ is a $d\times d$ matrix with strictly positive real parts of all eigenvalues, we not only  recover but also extend the result of Theorem 3.8 in \cite{FH14}. This earlier work examined the behavior of  the zero-mass  limit of  physical Brownian motion  in a magnetic field, where a nontrivial  antisymmetric part $M$ (in our notation) also emerged in the  limit rough path. However, in that setting, the
  non-triviality of $M$ stemmed from the non-symmetry of
  $C$. In contrast, our result demonstrates that $M$ can still be nontrivial even when 
  $C$ is symmetric, provided $C$ does not commute with $Q$.

The proof of Theorem~\ref{thm:main1} is based on the Kolmogorov continuity criterion for rough paths, specifically Theorem~3.3 in \cite{FH14}. In particular, the convergence of  the second component of the
rough path is derived from  ergodic properties of the Ornstein--Uhlenbeck process 
through a suitable variant of an ergodic theorem, which we develop  in
Section~\ref{s:erg}.

\medskip

The result of Theorem~\ref{thm:main1} serves as the first essential  building block for passing  to the limit in the system \eqref{eq:u}, \eqref{eq:w}, \eqref{eq:r}. The second crucial aspect is identifying a suitable  notion of solution to this system that remains  stable under the convergence provided by Theorem~\ref{thm:main1}. This notion must also be compatible with the uniform   estimates on $u^{\varepsilon}, w^{\varepsilon}, r^{\varepsilon}$ as $\varepsilon\to 0$, ensuring that it is possible   to pass to the limit as $\varepsilon\to 0$ and identify the limit equation.

At this stage, we prefer not to discuss the precise definition of such a solution, as it requires a more sophisticated framework involving the theory of unbounded rough drivers,  which can be understood as  unbounded operator-valued rough paths. For further background on this formalism, we refer to \cite{BG17}, where unbounded rough drivers were originally introduced. Details related to our specific setting can be found  in Section~\ref{s:RP}, and the derivation of the rough path formulation of \eqref{eq:u}, \eqref{eq:w}, \eqref{eq:r} is discussed in Section~\ref{s:rr}. Specifically,  in Definition~\ref{d:sol}, we introduce the concept of a probabilistically weak rough path solution to \eqref{eq:u}, \eqref{eq:w}, \eqref{eq:r}. This  is a solution that is weak in both analytical and probabilistic sense, and it satisfies the equations in the sense of rough paths.

 Finally, we arrive to our second main result, which is formulated  in Theorem~\ref{thm:main}. The proof of this result is presented in  Section~\ref{s:IS} and Section~\ref{s:tight1}.

\begin{theorem}
  \label{thm:main2}
  Let the initial values $(u_0^{\varepsilon})_{\varepsilon \in
  (0, 1)}$ and $(v_0^{\varepsilon})_{\varepsilon \in (0, 1)}$ be given
  so that both $(u_0^{\varepsilon})_{\varepsilon \in (0, 1)}$
  and $(\varepsilon^{1 / 2} v^{\varepsilon}_0)_{\varepsilon \in (0, 1)}$ are
  bounded in $L^{2}$ uniformly in $\varepsilon$. There exist
 probabilistically weak rough path solutions $(u^{\varepsilon}, (- C)^{- 1} Q^{1 / 2} W^{\varepsilon})$\footnote{{Recall that, for probabilistically weak solutions, the Brownian motion in \eqref{eq:w} is part of the solution and hence it a priori depends on $\varepsilon$, leading to the replacement of $W$ in \eqref{eq:w} by the Brownian motion $W^{\varepsilon}$.}} to \eqref{eq:u},
  \eqref{eq:w}, \eqref{eq:r}
  that   converge in
  law to a probabilistically weak rough path solution to 
  \begin{equation}
  d u = A u d t + b (u, u) d t + b \left( \overline{r}, u \right) d t + b
  (\ast d B, u) , \label{eq:ulim}
\end{equation}
where $\overline{r}$ is the so-called It\^o--Stokes velocity
\begin{equation}
  \overline{r} \assign \int (- C)^{- 1} b (w, w) d \mu (w)
  \label{eq:itostokes}
\end{equation}
with $\mu$ being the unique invariant measure of \eqref{eq:w},
and the stochastic integral with respect to $B$ corresponds to the
integration with respect to the rough path $(B^1, B^2)$ obtained in Theorem~\ref{thm:main1}.
\end{theorem}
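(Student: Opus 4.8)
The plan is to establish tightness of the laws of the solution triple together with its rough path data, extract a converging subsequence via Skorokhod representation, and then pass to the limit in the rough path formulation of the system using the stability results for unbounded rough drivers. First I would derive the uniform-in-$\varepsilon$ a priori estimates: testing \eqref{eq:u} against $u^\varepsilon$ gives control of $u^\varepsilon$ in $L^\infty_t L^2_x \cap L^2_t H^1_x$, but because the singular term $b(\varepsilon^{-1/2}w^\varepsilon, u^\varepsilon)$ appears one must work at the level of the rough path formulation from Section~\ref{s:rr}, using the $a$ priori bounds for unbounded rough drivers (the rough Gronwall lemma of \cite{DGHT19}) driven by the rough path norm of $(Y^{\varepsilon,1}, Y^{\varepsilon,2})$, which by Theorem~\ref{thm:main1} is bounded uniformly in $\varepsilon$ almost surely (indeed in every $L^p(\Omega)$ by the moment bounds behind the Kolmogorov criterion). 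In parallel I would estimate $w^\varepsilon$ via its explicit Ornstein--Uhlenbeck form and stationarity under $\mu$, and $r^\varepsilon$ via \eqref{eq:r}, exploiting the smoothing from $\varepsilon^{-1}C$ to absorb the $\varepsilon^{-1/2}Aw^\varepsilon$ term and to show $r^\varepsilon$ stays bounded in a suitable space (this is where the It\^o--Stokes drift $\overline r$ emerges: $r^\varepsilon \rightharpoonup \overline r$ through the ergodic theorem of Section~\ref{s:erg} applied to $(-C)^{-1}b(w^\varepsilon, w^\varepsilon)$).

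Next I would set up tightness. The natural state space is a product of a path space for $u^\varepsilon$ (e.g. $C_t H^{-\delta}_x \cap L^2_{t,x}$ weak, obtained from the uniform bounds plus a fractional-in-time equicontinuity estimate read off the rough equation), the rough path space $\mathscr C^\alpha$ for $(Y^{\varepsilon,1}, Y^{\varepsilon,2})$ valued in the relevant Sobolev space (tight by Theorem~\ref{thm:main1}, since a.s. convergence gives tightness), and a path space for $r^\varepsilon$. Aldous-type or direct moment criteria give tightness of each marginal; since the limit of the rough path component is deterministic-law (it is the law of the fixed lift $(B^1,B^2)$ of $B=(-C)^{-1}Q^{1/2}W$), Prokhorov plus Skorokhod representation yields, on a new probability space, a.s.-converging copies $(\tilde u^\varepsilon, \tilde Y^\varepsilon, \tilde r^\varepsilon) \to (\tilde u, (B^1,B^2), \overline r)$.

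Then I would pass to the limit in the rough path formulation of \eqref{eq:u}, \eqref{eq:w}, \eqref{eq:r}. Each $\tilde u^\varepsilon$ satisfies the rough equation driven by the unbounded rough driver built from $\tilde Y^\varepsilon$, with drift $b(\tilde u^\varepsilon,\tilde u^\varepsilon) + b(\tilde r^\varepsilon, \tilde u^\varepsilon)$; the uniform rough-driver estimates give enough compactness (in particular control of the remainder $u^{\natural,\varepsilon}$) that the defining identity is stable under the convergence, so $\tilde u$ solves \eqref{eq:ulim} with the transport rough integral $b(\ast dB, u)$ against $(B^1,B^2)$ and with drift $b(\tilde u,\tilde u)+b(\overline r,\tilde u)$. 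The nonlinear terms pass to the limit by the strong $L^2_{t,x}$ convergence of $\tilde u^\varepsilon$ (from the $L^2_t H^1_x$ bound plus time-regularity, via Aubin--Lions); the term $b(\tilde r^\varepsilon,\tilde u^\varepsilon) \to b(\overline r,\tilde u)$ using that $\tilde r^\varepsilon \to \overline r$ weakly in time and $\tilde u^\varepsilon \to \tilde u$ strongly. Finally one checks that $(\tilde u,(-C)^{-1}Q^{1/2}\tilde W)$ is a probabilistically weak rough path solution in the sense of Definition~\ref{d:sol}, in particular that the limiting Brownian motion $\tilde W$ together with the lift $(B^1,B^2)$ has the correct joint law (identified via a martingale characterization of $\tilde W$ and Theorem~\ref{thm:main1}).

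The main obstacle, I expect, is the passage to the limit in the singular term $b(\varepsilon^{-1/2}\tilde w^\varepsilon, \tilde u^\varepsilon)$: this is precisely where the two effects must be separated and correctly identified --- the oscillatory part producing the rough transport noise $b(\ast dB,u)$ against the \emph{non-Stratonovich} lift of Theorem~\ref{thm:main1}, and the ``diagonal'' self-interaction of the fast mode producing, after the ergodic averaging, the It\^o--Stokes drift $\overline r$. Making this rigorous requires the uniform rough-driver a priori estimates to be robust enough that the remainder terms vanish in the limit, and requires the ergodic theorem of Section~\ref{s:erg} to control the time-averaged fast-fast interaction along the (random, $\varepsilon$-dependent) trajectories; the interplay of these two limiting mechanisms within a single stability argument for the rough equation is the delicate point.
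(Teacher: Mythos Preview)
Your overall architecture---uniform a priori estimates, tightness, Skorokhod representation, passage to the limit in the rough formulation---matches the paper's. Two points, however, are off.

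First, the energy estimate for $u^\varepsilon$ does \emph{not} require rough Gronwall. The singular term $b(\varepsilon^{-1/2}w^\varepsilon,u^\varepsilon)$ vanishes when tested against $u^\varepsilon$ by the cancellation $\langle b(v,u),u\rangle=0$, so the classical energy inequality gives the $L^\infty_t H\cap L^2_t H^1$ bound \emph{pathwise and deterministically}, depending only on $\|u_0^\varepsilon\|_H$. Rough path machinery enters only later, for the remainder $u^{\varepsilon,\natural}$ and for the $C^{p\text{-var}}$ time regularity of $u^\varepsilon$ needed for tightness in $C_{\mathrm{weak}}([0,T];H)\cap L^2(0,T;H)$.

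Second, and more importantly, you have not isolated the actual delicate point. The bound on $r^\varepsilon$ in $L^2(0,T;H^\gamma)$ is only uniform \emph{in expectation} (this is unavoidable: the equation for $r^\varepsilon$ contains $b(v^\varepsilon,\varepsilon^{-1/2}w^\varepsilon)$ and one controls $\varepsilon^2\mathbb E\|v^\varepsilon\|_H^4$, not a pathwise quantity). Consequently the control $\omega_{\mu^\varepsilon}$ for the drift, and hence the remainder estimate and the time regularity estimate, are uniform only after taking expectation. This is what makes the limit passage nontrivial: on the original probability space you cannot simply invoke lower semicontinuity to conclude that the limit remainder lies in $C^{p/3\text{-var}}_{2,\mathrm{loc}}$. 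The paper resolves this by using Jakubowski--Skorokhod with $r^\varepsilon$ included as a variable in $(L^2(0,T;H^\gamma),w)$; on the new space the a.s.\ weak convergence of $\bar r^\varepsilon$ yields an a.s.\ (random but $\varepsilon$-independent) bound $\int_0^T\|\bar r^\varepsilon\|_{H^\gamma}^2\,dt\le N(\bar\omega)$, which in turn makes the remainder control pathwise uniform and allows the lower semicontinuity argument to go through. Your sketch treats the remainder stability as routine; it is not, and this expectation-to-pathwise upgrade via Skorokhod is the mechanism you are missing. (Relatedly, your ``main obstacle'' paragraph conflates two things: the It\^o--Stokes drift comes entirely from $b(r^\varepsilon,u^\varepsilon)$ via the ergodic limit of $r^\varepsilon$, not from any self-interaction hidden inside $b(\varepsilon^{-1/2}w^\varepsilon,u^\varepsilon)$; the decomposition $v^\varepsilon=\varepsilon^{-1/2}w^\varepsilon+r^\varepsilon$ separates the two effects from the outset.)
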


As demonstrated, the rough path theory serves as a valuable tool for directly identifying the limit stochastic integral. This approach may hold potential interest for other problems in the context of approximation--diffusion.

Overall, our construction is implicitly probabilistic,  and the
convergence of the system \eqref{eq:u}, \eqref{eq:w}, \eqref{eq:r} to \eqref{eq:ulim}
is established in law. On the one hand, the convergence of $r^{\varepsilon}$ to the It\^o--Stokes velocity, as proved in Section~\ref{s:av}, leverages the ergodic properties of the Ornstein--Uhlenbeck process discussed in Section~\ref{s:erg}.
On the other hand,  the final passage necessitates a combination of  probabilistic and
rough path arguments. To this end,   we develop a rough path variant of the stochastic compactness argument, which is  based on the Skorokhod
representation theorem. A purely rough path approach is hindered among others
by the fact that the a priori uniform estimate for $r^{\varepsilon}$ 
 holds only in expectation (cf. Section~\ref{s:bdr}). Consequently,  the analysis of the rough path
formulation of \eqref{eq:u}, particularly regarding the remainder estimates in
Section~\ref{s:rem} and the time regularity of $u^{\varepsilon}$ in
Section~\ref{s:timereg}, becomes uniform only after taking expectation. This issue complicates  the passage to the limit in Section~\ref{s:tight} and requires careful consideration.

\subsection*{Acknowledgment} The research of M.H. was funded by the European Research Council
  (ERC) under the European Union's Horizon 2020 research and innovation
  programme (grant agreement No. 949981). A.D. benefits from the support of the French government “Investissements d’Avenir” program integrated to France 2030, bearing the following reference ANR-11-LABX-0020-01 and is partially funded by the ANR project ADA.

\section{Preliminaries}\label{s:2222}

\subsection{Function spaces, operators and noise}\label{s:21}

\rmb{We first introduce the basic functional setting related to the Navier--Stokes equations. The reader is referred to \cite[Chapter 2]{T83} for further details.} Let $H$ denote the $L^2 (\mathbb{T}^3 ; \mathbb{R}^3)$ space of vector fields
of zero mean and divergence and let $\langle \cdummy, \cdummy \rangle$ be the
associated inner product. By $(e_k)_{k \in \mathbb{N}}$ we denote an orthonormal basis of $H$. $\mathbb{P}$ denotes the Leray projection onto $H$ \rmb{which is defined by $\mathbb{P}u=u-\nabla\Delta^{-1}\div u$.}
$A$ is the Stokes operator $A u = \nu \Delta \mathbb{P}$ for a viscosity
constant $\nu > 0$. \rmb{Here $\Delta$ is the Laplace operator on $\mathbb T^3$, {\it i.e.} with periodic boundary conditions.}
By
${H^n} $, $n \in \mathbb{R}$, we denote the domains of fractional powers of $-
A$ equipped with the graph norm
\[ \| u \|_{H^n} \assign \| (- A)^{n / 2} u \|_H . \]
$b$ is the Navier--Stokes nonlinearity
\[ b (u, v) = -\mathbb{P} (u \cdummy \nabla v) . \]
It is a continuous operator $H \times H \rightarrow H^{- \theta_0}$
as well as $H \times H^1 \rightarrow H^{- \theta_0 + 1}$, where
$\theta_0 > 1 + 3 / 2$, \rmb{as can be seen from its bilinearity combined with the estimates
\[ | \langle b (u, v), w \rangle | \leqslant \| u \|_H \| v \|_{H} \| \nabla w
   \|_{L^{\infty}}\lesssim \| u \|_H \| v \|_{H} \|  w
   \|_{H^{\theta_{0}}},\]
   \[ | \langle b (u, v), w \rangle | \leqslant \| u \|_H \| \nabla v \|_{H} \| w
   \|_{L^{\infty}}\lesssim  \| u \|_H \| v \|_{H^1} \| w
   \|_{H^{\theta_{0}-1}},\]
which hold true for smooth divergence free vector fields $u, v, w$ by the Sobolev embedding.}
We fix a $\theta_0$ close to $1 + 3 / 2$ throughout
the paper.
Moreover, the symmetry property $\langle b (u, v), w \rangle = - \langle b (u,
w), v \rangle$ leads to the cancellation $\langle b (u, v), v \rangle = 0$.

\rmb{Next, we are concerned with assumptions on the operator $C$ and we refer the reader to \cite{EN06} for an introduction to semigroup theory.} Let $\sigma > 2 + 3 / 2$ be fixed throughout the paper, close to the given
lower bound. The operator $C$ is the infinitesimal generator of a strongly
continuous semigroup $(e^{C t})_{t \geqslant 0}$ on
${H^{\sigma}}$, $- C$ is invertible and the adjoint $C^{\ast}$
generates the adjoint semigroup $(e^{C^{\ast} t})_{t \geqslant 0} = ((e^{C
t})^{\ast})_{t \geqslant 0}$. We assume that there exist $\iota > 0$, $\gamma
\geqslant 0$ \ such that 
\begin{equation}
  \| e^{C t} \|_{\mathcal{L} (H^{\sigma})} \lesssim e^{- \iota t},
  \label{eq:6}
\end{equation}
\begin{equation}
  \| (- C)^{- 1} (e^{C (t - s)} - \tmop{Id}) \|_{\mathcal{L} (H^{\sigma} ;
  H^{\sigma})} \lesssim | t - s |^{1 / 2}, \label{eq:77}
\end{equation}
\begin{equation}
  \| (- C)^{- 1} \|_{\mathcal{L} (H^{\sigma} ; H^{\sigma})} + \| (- C)^{- 1}
  \|_{\mathcal{L} (H^{- \theta_0} ; H^{- \theta_0})} \lesssim 1, \label{eq:89}
\end{equation}
\begin{equation}
  - \langle w, C w \rangle \gtrsim \| w \|_{H^{\gamma}}^{{2}} . \label{eq:999}
\end{equation}

An example of such an operator is given by $C = - \rho (- A)^{\varsigma} + K$
for some $\rho > 0$, $\varsigma \geqslant 0$ and a suitable (lower order)
perturbation $K$.  \rmb{For $\gamma = 0$, we can take $\varsigma =0$ and $K$ a bounded operator on $H$ of norm less than $\rho$.  For $\gamma>0$, we can take $\varsigma\ge \gamma$  and $K$ a bounded operator from $H^{\tilde\gamma+\sigma}$ to $H^\sigma$ with $\tilde\gamma<\gamma$ so that $C$ is a sectorial orperator in $H^\sigma$ and \eqref{eq:77} holds (see \cite{Henry}). Under the additional assumption that $K(-A)^{-\varsigma}$ is bounded from $H^\sigma$ to itself and from $H^{-\theta_0}$ to itself with sufficiently small norm, the other assumptions hold by perturbation.  }

\rmb{We proceed with the assumptions on the noise. For foundations of infinite-dimensional stochastic analysis we refer to \cite{DPZa14}.}
The driving process $W$ is a cylindrical Wiener process on $H$ and the
covariance operator $Q$ is symmetric, nonnegative and trace class and
\begin{equation}
  \| Q^{1 / 2} \|_{L_2 (H ; H^{\sigma})} \lesssim 1, \label{eq:tr}
\end{equation}
where $L_2 (H ; H^{\sigma})$ is the space of Hilbert--Schmidt operators from $H$ to $H^\sigma$. 
Additionally, we make the slightly stronger assumption that there exists $\vartheta \in (0, 1]$ so that
\begin{equation}
  \| (- C)^{\vartheta} Q^{1 / 2} \|_{L_2 (H ; H^{\sigma})} \lesssim 1.
  \label{eq:new}
\end{equation}
Finally, we have:
\begin{equation}
  \| (- C)^{- \vartheta} (e^{C (t - s)} - \tmop{Id}) \|_{\mathcal{L} (H^{\sigma}
  ; H^{\sigma})} \lesssim | t - s |^{\vartheta / 2} . \label{eq:new2}
\end{equation}
Indeed, this is clearly true for $\vartheta=0$ so that this follows by interpolation with \eqref{eq:77}.

\subsection{Rough path theory}\label{s:RP}

\rmb{In this section, we introduce the formalism of the rough path theory and particularly the concept of unbounded rough driver introduced in \cite{BG17}. For a thorough introduction to the theory of rough paths,  we refer to \cite{FH14}.}

Throughout the paper, $T > 0$ is given. For a path $g : [0, T] \rightarrow E$
to a Banach space $E$ we denote its increment by $\delta g_{s t} = g_t - g_s$,
$0 \leqslant s \leqslant t \leqslant T$. Let $\Delta_T \assign \{ (s, t) \in
[0, T]^2 ; s \leqslant t \}$. For a two-index map $g : \Delta_T \rightarrow E$
we define its increment by $\delta g_{s r t} = g_{s t} - g_{s r} - g_{r t}$.

For a Hilbert space $E$, $E \otimes E$ denotes the completion of the algebraic
tensor $E \otimes_{\tmop{alg}} E$ with respect to the Hilbert--Schmidt norm.
More precisely, we define the inner product on $E \otimes_{\tmop{alg}} E$ by
\[ \langle a \otimes_{\tmop{alg}} b, c \otimes_{\tmop{alg}} d \rangle_{E
   \otimes E} = \langle a, c \rangle_E \langle b, d \rangle_E \]
and $E \otimes E$ is then the completion of $E \otimes_{\tmop{alg}} E$ with
respect to the norm $\|\cdummy\|_{E \otimes E} = \langle \cdummy, \cdummy
\rangle_{E \otimes E}^{1 / 2}$. This space can be idendified with the space of
Hilbert--Schmidt operators $L_2 (E) \assign L_2 (E ; E)$, see \cite{RS80}.

A two-index map $\omega : \Delta_T \rightarrow [0, \infty)$ is called control
if it is continuous, superadditive, i.e. for all $s \leqslant r \leqslant t$
\[ \omega (s, r) + \omega (r, t) \leqslant \omega (s, t), \]
and $\omega (s, s) = 0$ for all $s \in [0, T]$.

Let $\alpha \in (0, 1]$. We denote by $C^{\alpha}_2 ([0, T] ; E)$ the closure
of the set of smooth two-index maps $g : \Delta_T \rightarrow E$ with respect
to the seminorm
\[ \| g \|_{C^{\alpha}_2 ([0, T] ; E)} \assign \sup_{s, t \in \Delta_T, s \neq
   t} \frac{\| g_{s t} \|_E}{| t - s |^{\alpha}} . \]
\rmb{Taking closure of smooth functions guarantees separability and, consequently,} the space $C^{\alpha}_2 ([0, T] ; E)$ is
Polish.

Let $p \geqslant 1$. By $C^{p - \tmop{var}}_2 ([0, T] ; E)$ we denote the
space of continuous two-index maps $g : \Delta_T \rightarrow E$ with finite
$p$-variation, i.e.
\[ \| g \|_{C^{p - \tmop{var}}_2 ([0, T] ; E)} \assign \sup_{\pi \in
   \mathcal{P} ([0, T])} \left( \sum_{(s, t) \in \pi} \| g_{s t} \|_E^p
   \right)^{1 / p} < \infty, \]
where the supremum is taken over all partitions $\pi$ of $[0, T]$.

By $C^{p - \tmop{var}}_{2, \tmop{loc}} ([0, T] ; E)$ we denote the space of
two-index maps $g : \Delta_T \rightarrow E$ such that there exists a covering
$\{ I_k \}_k$ of $[0, T]$ so that $g \in C^{p - \tmop{var}}_2 (I_k ; E)$ \rmb{for all $k$}. We
denote by $C^{p - \tmop{var}} ([0, T] ; E)$ the set of all paths $g : [0, T]
\rightarrow E$ so that $\delta g \in C^{p - \tmop{var}}_2 ([0, T] ; E)$. We
note that $C^{\alpha}_2 ([0, T] ; E) \subset C^{p - \tmop{var}}_2 ([0, T] ;
E)$ provided $p = 1 / \alpha$. Also, if $g \in C^{p - \tmop{var}}_2 ([0, T] ;
E)$ then
\[ \omega_q (s, t) \assign \| g \|_{C^{p - \tmop{var}}_2 ([\rmb{s,t}] ; E)}^p \]
is a control.

We proceed with a definition of a Hilbert space-valued rough path.

\begin{definition}
  \label{d:rp}Let $\alpha \in (1 / 3, 1 / 2)$. A continuous $E$-valued
  $\alpha$-rough path is a pair
  \[ Y = (Y^1, Y^2) \in C^{\alpha}_2 ([0, T] ; E) \times C^{2 \alpha}_2 ([0,
     T] ; E \otimes E) \]
  which satisfies Chen's relation
  \[ \delta Y^1_{s r t} = 0, \qquad \delta Y^2_{s r t} = Y^1_{s r} \otimes
     Y^1_{r t}, \qquad 0 \leqslant s \leqslant r \leqslant t \leqslant T. \]
\end{definition}

The space of continuous $E$-valued $\alpha$-rough paths is denoted by
$\mathcal{C}^{\alpha} ([0, T] ; E)$.

For the trace class covariance
operator $Q$ as defined above, the $Q$-Wiener process $Q^{1 / 2} W$ is given
by the formula $Q^{1 / 2} W = \sum_{k \in \mathbb{N}} Q^{1 / 2} e_k W_k$ for a
sequence $(W_k)_{k \in \mathbb{N}}$ of mutually independent real-valued Wiener
processes. Its Stratonovich rough path lift reads as
\[ W^1_{s t} \assign (\delta Q^{1 / 2} W)_{s t} = \sum_{k \in \mathbb{N}} Q^{1
   / 2} e_k (\delta W_k)_{s t}, \]
\[ W^2_{s t} \assign \int_s^t (\delta Q^{1 / 2} W)_{s r} \otimes \circ d Q^{1
   / 2} W_r = \sum_{k, \ell \in \mathbb{N}} \int_s^t (\delta W_k)_{s r} \circ
   d W_{\ell, r} (Q^{1 / 2} e_k \otimes Q^{1 / 2} e_{\ell}) \]
and is a continuous {$H^{\sigma}$-valued} $\alpha$-rough path,
see e.g. Exercise 3.16 in {\cite{FH14}}. In particular, the regularity of the
second component follows via Kolmogorov's continuity criterion (see e.g.
Theorem~3.1 in {\cite{FH14}}) together with Nelson's estimate for $p \in [2,
\infty)$ and It{\^o}'s isometry: rewriting the stochastic integral in
It{\^o}'s form gives
\[ W^2_{s t} = \int_s^t (\delta Q^{1 / 2} W)_{s r} \otimes d Q^{1 / 2} W_r +
   \frac{1}{2} \sum_{k \in \mathbb{N}} (t - s) (Q^{1 / 2} e_k \otimes Q^{1 /
   2} e_k) . \]
Here the It{\^o}--Stratonovich correction is of finite variation
{in $H^{\sigma} \otimes H^{\sigma}$ by \eqref{eq:tr}}, whereas
for the It{\^o} stochastic integral it holds by the trace class assumption
together with It{\^o}'s isometry
\[ \mathbb{E} \left[ \left\| \int_s^t (\delta Q^{1 / 2} W)_{s r} \otimes d
   Q^{1 / 2} W_r \right\|_{H^{\sigma} \otimes H^{\sigma}}^2 \right]
   =\mathbb{E} \left[ \int_s^t \| (\delta Q^{1 / 2} W)_{s r} \otimes Q^{1 / 2}
   \cdummy \|_{L_2 (H ; H^{\sigma} \otimes H^{\sigma})}^2 d r \right], \]
where
\[ \| (\delta Q^{1 / 2} W)_{s r} \otimes Q^{1 / 2} \cdummy \|_{L_2 (H ;
   H^{\sigma} \otimes H^{\sigma})}^2 = \sum_{\ell \in \mathbb{N}} \| (\delta
   Q^{1 / 2} W)_{s r} \otimes Q^{1 / 2} e_{\ell} \|_{H^{\sigma} \otimes
   H^{\sigma}}^2 \]
\[ = \| (\delta Q^{1 / 2} W)_{s r} \|^2_{H^{\sigma}} \sum_{\ell \in
   \mathbb{N}} \| Q^{1 / 2} e_{\ell} \|_{H^{\sigma}}^2 = \| Q^{1 / 2} \|_{L_2
   (H ; H^{\sigma})}^4 | (\delta W_k)_{s r} |^2, \]
and therefore by \eqref{eq:tr}
\[ \mathbb{E} \left[ \left\| \int_s^t (\delta Q^{1 / 2} W)_{s r} \otimes d
   Q^{1 / 2} W_r \right\|_{H^{\sigma} \otimes H^{\sigma}}^2 \right] \lesssim
   \int_s^t \mathbb{E} | (\delta W_k)_{s r} |^2 d r \lesssim \int_s^t | r - s
   | d r \lesssim | t - s |^2 . \]
Accordingly,
\[ (\mathbb{E} [\| W^2_{s t} \|_{H^{\sigma} \otimes H^{\sigma}}^p])^{1 / p}
   \lesssim (\mathbb{E} [\| W^2_{s t} \|_{H^{\sigma} \otimes
   H^{\sigma}}^2])^{1 / 2} \lesssim | t - s | . \]
Unbounded rough drivers can be seen as operator valued rough paths with values
in the space of unbounded operators. In what follows, we work with a scale of
Banach spaces $(E^m, \| \cdummy \|_m)_{m \in \mathbb{R}_+}$ such that $E^{m +
l}$ is continuously embedded in $E^m$ for every $m, l \in \mathbb{R}_+$. We
denote by $E^{- m}$ the topological dual of $E^m$.

\begin{definition}
  \label{d:urd}Let $\alpha \in (1 / 3, 1 / 2)$. A continuous unbounded
  $\alpha$-rough driver with respect to the scale $(E^m, \| \cdummy \|_m)_{m
  \in \mathbb{R}_+}$ is a pair $\mathbb{A}= (\mathbb{A}^1, \mathbb{A}^2)$ of
  two-index maps such that
  \begin{equation}
    \| \mathbb{A}^1_{s t} \|_{\mathcal{L} (E^{- m}, E^{- (m + 1)})} \lesssim |
    t - s |^{\alpha}, \qquad m \in [0, 2], \label{eq:a1a1}
  \end{equation}
  \begin{equation}
    \| \mathbb{A}^2_{s t} \|_{\mathcal{L} (E^{- m}, E^{- (m + 2)})} \lesssim |
    t - s |^{2 \alpha}, \qquad m \in [0, 1], \label{eq:a2a2}
  \end{equation}
  and Chen's relation holds true, i.e.
  \begin{equation}
    \delta \mathbb{A}^1_{s r t} = 0, \qquad \delta \mathbb{A}^2_{s r t}
    =\mathbb{A}^1_{r t} \mathbb{A}^1_{s r}, \qquad 0 \leqslant s \leqslant r
    \leqslant t \leqslant T. \label{eq:chenA}
  \end{equation}
\end{definition}

The above definition appeared already in a number of works, see e.g.
{\cite{DGHT19}}, {\cite{FHLN20}}, {\cite{MR3918521}}, {\cite{HLN21}}. In our
present setting we let $E^m = H^m$, $m \in \mathbb{R}$. In order to control
the term leading to the It{\^o}--Stokes drift in Section~\ref{s:rem} below, we
will make use of the bounds
\begin{equation}
  \| \mathbb{A}^{1, \ast}_{s t} \varphi \|_{L^{\infty}} \lesssim \| \varphi
  \|_{H^3} | t - s |^{\alpha}, \label{eq:a1a11}
\end{equation}
\begin{equation}
  \| \mathbb{A}^{2, \ast}_{s t} \varphi \|_{L^{\infty}} \lesssim \| \varphi
  \|_{H^{\theta_0 + 1}} | t - s |^{2 \alpha} . \label{eq:a2a22}
\end{equation}
By Sobolev imbedding \eqref{eq:a1a11} follows from \eqref{eq:a1a1} with $m \in
(3 / 2, 2]$ but \eqref{eq:a2a22} does not follow from \eqref{eq:a2a2} and
needs to be verified. We also note that it would suffice if \eqref{eq:a1a11},
\eqref{eq:a2a22} were true with arbitrary positive exponents on their right
hand sides, not necessarily $\alpha$ and $2 \alpha$, respectively.

\section{Rough formulation of the problem}\label{s:rr}

We intend to use rough path theory to formulate the equation \eqref{eq:u} for
$u^{\varepsilon}$, to obtain uniform estimates and to pass to the limit. In
order to apply the theory of {\cite{DGHT19}} and {\cite{MR3918521}}, we
understand the noise term $\varepsilon^{- 1 / 2} b (w^{\varepsilon},
u^{\varepsilon}) d t$ as an unbounded rough driver term of the form
$d\mathbb{A}^{\varepsilon} u^{\varepsilon}$. More precisely, we define
$y^{\varepsilon}$ via \eqref{eq:y} so that we obtain formally
\begin{equation}
  \varepsilon^{- 1 / 2} b (w^{\varepsilon}, u^{\varepsilon}) d t = -\mathbb{P}
  [d y^{\varepsilon} \cdummy \nabla u^{\varepsilon}], \label{eq:2}
\end{equation}
where $\mathbb{P}$ denotes the Leray projection. We show in Theorem~\ref{l:11}
below, that the so-defined process $y^{\varepsilon}$, lifted canonically to a
rough path, converges to a certain rough path lift of the Brownian motion $B =
(- C)^{- 1} Q^{1 / 2} W$ in the sense of rough paths $\mathcal{C}^{\alpha}
([0, T] ; H^{\sigma})$. In other words, letting
\[ Y^{\varepsilon, 1}_{s t} \assign \delta y_{s t}^{\varepsilon} = \delta
   \left( \int_0^{\point} \varepsilon^{- 1 / 2} w_r^{\varepsilon} d r
   \right)_{s t}, \qquad Y^{\varepsilon, 2}_{s t} \assign \int_s^t \delta
   y^{\varepsilon}_{s r} \otimes d y_r^{\varepsilon}, \qquad 0 \leqslant s, t
   \leqslant T, \]
defines a rough path in the sense of Definition~\ref{d:rp}. In particular,
under our assumptions on $C$, $Q$, it is an $\mathcal{C}^{\alpha} ([0, T] ;
H^{\sigma})$-valued rough path.

As the next step, we shall compose it additionally with the gradient and the
Leray projection as required in \eqref{eq:2}, in order to define the
corresponding unbounded rough driver $\mathbb{A}^{\varepsilon}$. In order to
guess the form for its second component, we first work formally and iterate
the equation \eqref{eq:u} into itself. Focusing only on the key terms
containing $w^{\varepsilon}$, we obtain
\[ \delta u_{s t}^{\varepsilon} = \cdots - \int_s^t \mathbb{P} [\varepsilon^{-
   1 / 2} w^{\varepsilon}_r \cdummy \nabla u^{\varepsilon}_r] d r \]
\[ = \cdots - \int_s^t \mathbb{P} [\varepsilon^{- 1 / 2} w^{\varepsilon}_r
   \cdummy \nabla u^{\varepsilon}_s] d r + \int_s^t \int_s^r \mathbb{P}
   [\varepsilon^{- 1 / 2} w^{\varepsilon}_r \cdummy \nabla \mathbb{P}
   [\varepsilon^{- 1 / 2} w^{\varepsilon}_{\theta} \cdummy \nabla
   u^{\varepsilon}_{\theta}]] d \theta d r + \cdots \]
\begin{equation}
  = \cdots - \int_s^t \mathbb{P} [\varepsilon^{- 1 / 2} w^{\varepsilon}_r
  \cdummy \nabla u^{\varepsilon}_s] d r + \int_s^t \int_s^r \mathbb{P}
  [\varepsilon^{- 1 / 2} w^{\varepsilon}_r \cdummy \nabla \mathbb{P}
  [\varepsilon^{- 1 / 2} w^{\varepsilon}_{\theta} \cdummy \nabla
  u^{\varepsilon}_s]] d \theta d r + u^{\varepsilon, \natural}_{s t},
  \label{eq:A22}
\end{equation}
where we expect the remainder $u^{\varepsilon, \natural}_{s t}$ to be of order
$o (| t - s |)$ as a distribution with respect to the spatial variable. In the
first term on the right hand side of \eqref{eq:A22} we recognize the first
component of the unbounded rough driver and we define
\begin{equation}
  \qquad \mathbb{A}^{\varepsilon, 1}_{s t} \varphi \assign -\mathbb{P}
  [Y^{\varepsilon, 1}_{s t} \cdummy \nabla \varphi] . \label{eq:A1}
\end{equation}

To identify the second component of the rough driver from the second term on
the right hand side of \eqref{eq:A22}, we first recall that by definition
\[ Y^{\varepsilon, 2}_{s t} = \int_s^t \delta \left( \int_0^. \varepsilon^{- 1
   / 2} w_{\theta}^{\varepsilon} d \theta \right)_{s r} \otimes d \left(
   \int_0^. \varepsilon^{- 1 / 2} w_{\theta}^{\varepsilon} d \theta \right)_r
   = \int_s^t \int_s^r \varepsilon^{- 1 / 2} w_{\theta}^{\varepsilon} d \theta
   \otimes \varepsilon^{- 1 / 2} w^{\varepsilon}_r d r. \]
Consequently, we define for $\varphi \in C^{\infty} (\mathbb{T}^3)$
\begin{equation}
  (\mathbb{A}^{\varepsilon, 2}_{s t} \varphi) (x) \assign A^2 (Y^{\varepsilon,
  2}_{s t}, \varphi) (x), \label{eq:A2}
\end{equation}
where the operator $A^2$ does not depend on $\varepsilon$, is bilinear and
acts on $(H^{\sigma} \otimes H^{\sigma}) \times C^{\infty} (\mathbb{T}^3)$ as
\begin{equation}
  A^2 (f \otimes g, \varphi) (x) =\mathbb{P} [g (x) \cdummy \nabla \mathbb{P}
  [f (x) \nosymbol \cdummy \nabla \varphi (x)]] . \label{eq:2a2}
\end{equation}
With this definition, the second term on the right hand side of \eqref{eq:A22}
equals to $\mathbb{A}^{\varepsilon, 2}_{s t} u^{\varepsilon}_s$.

In addition, using the Chen's relation for rough paths (which in particular
holds true for canonical lifts given by Lebesgue--Stieltjes integration),
namely,
\[ \delta Y^{\varepsilon, 2}_{s r t} = Y^{\varepsilon, 1}_{s r} \otimes
   Y^{\varepsilon, 1}_{r t}, \]
together with the bilinearity of the operator $A^2$, we obtain
\[ \delta (\mathbb{A}^{\varepsilon, 2} \varphi)_{s r t} = (\delta
   \mathbb{A}^{\varepsilon, 2})_{s r t} \varphi = A^2 (\delta Y^{\varepsilon,
   2}_{s r t}, \varphi) \]
\[ = A^2 (Y^{\varepsilon, 1}_{s r} \otimes Y^{\varepsilon, 1}_{r t}, \varphi)
   =\mathbb{P} [Y^{\varepsilon, 1}_{r t} \cdummy \nabla \mathbb{P}
   [Y^{\varepsilon, 1}_{s r} \cdummy \nabla \varphi]]
   =\mathbb{A}^{\varepsilon, 1}_{r t} \mathbb{A}^{\varepsilon, 1}_{s r}
   \varphi . \]
Hence $(\mathbb{A}^{\varepsilon, 1}, \mathbb{A}^{\varepsilon, 2})$ satisfies
the Chen's relation \eqref{eq:chenA}.

It remains to verify the corresponding analytic estimates \eqref{eq:a1a1},
\eqref{eq:a2a2} as well as \eqref{eq:a2a22}. For our application we work with
the $L^2$-scale of Sobolev spaces defined in Section~\ref{s:21}, namely, we
let $E^m \assign H^m$, $m \in \mathbb{R}$. From the continuity of the Leray
projection on $H^m$, $m \in \mathbb{R}$, together with the fact that
$\tmop{div} Y^{\varepsilon, 1}_{s t} = 0$ we obtain for $m \in [0, 2]$ and
$\varphi \in H^{- m}$, $\| \varphi \|_{H^{- m}} \leqslant 1$
\[ \| \mathbb{A}_{s t}^{\varepsilon, 1} \varphi \|_{H^{- (m + 1)}} =
   \sup_{\psi \in H^{m + 1}, \| \psi \|_{H^{m + 1}} \leqslant 1} \langle
   Y^{\varepsilon, 1}_{s t} \cdummy \nabla \varphi, \psi \rangle = \sup_{\psi
   \in H^{m + 1}, \| \psi \|_{H^{m + 1}} \leqslant 1} \langle Y^{\varepsilon,
   1}_{s t} \cdummy \nabla \psi, \varphi \rangle \]
\[ \leqslant \sup_{\psi \in H^{m + 1}, \| \psi \|_{H^{m + 1}} \leqslant 1} \|
   Y^{\varepsilon, 1}_{s t} \cdummy \nabla \psi \|_{H^m} \leqslant \|
   Y^{\varepsilon, 1}_{s t} \|_{W^{m, \infty}} \lesssim \| Y^{\varepsilon,
   1}_{s t} \|_{H^{\sigma}}, \]
provided $H^{\sigma} \subset W^{m, \infty}$.

Similarly, we use $\tmop{div} y^{\varepsilon}_s = \tmop{div} \psi = 0$ and
integration by parts to obtain
\[ \langle \mathbb{A}^{\varepsilon, 2}_{s t} \varphi, \psi \rangle =
   \left\langle \int_s^t d y^{\varepsilon}_r \cdummy \nabla \mathbb{P} [\delta
   y^{\varepsilon}_{s r} \cdummy \nabla \varphi], \psi \right\rangle =
   \left\langle \varphi, \int_s^t \delta y^{\varepsilon}_{s r} \cdummy \nabla
   \mathbb{P} [d y^{\varepsilon}_r \cdummy \nabla \psi] \right\rangle \]
\[ \  \]
Accordingly, it holds for $m \in [0, 1]$ and $\varphi \in H^{- m}$, $\|
\varphi \|_{H^{- m}} \leqslant 1$
\[ \| \mathbb{A}^{\varepsilon, 2}_{s t} \varphi \|_{H^{- (m + 2)}} =
   \sup_{\psi \in H^{m + 2}, \| \psi \|_{H^{m + 2}} \leqslant 1} \left\langle
   \varphi, \int_s^t \delta y^{\varepsilon}_{s r} \cdummy \nabla \mathbb{P} [d
   y^{\varepsilon}_r \cdummy \nabla \psi] \right\rangle \]
\[ \leqslant \sup_{\psi \in H^{m + 2}, \| \psi \|_{H^{m + 2}} \leqslant 1}
   \left\| \int_s^t \delta y^{\varepsilon}_{s r} \cdummy \nabla \mathbb{P} [d
   y^{\varepsilon}_r \cdummy \nabla \psi] \right\|_{H^m} \leqslant \|
   Y^{\varepsilon, 2}_{s t} \|_{H^{\sigma} \otimes H^{\sigma}}, \]
provided $H^{\sigma} \subset W^{m + 1, \infty}$.

To summarize, for \eqref{eq:a1a1} and \eqref{eq:a2a2} we require $\sigma$ to
be such that by Sobolev embedding $H^{\sigma} \subset W^{2, \infty}$, that is,
$\sigma > 2 + 3 / 2$. Regarding \eqref{eq:a2a22}, we use the embeddings
$H^{\theta_0 + 1} \subset W^{2, \infty}$, $H^{\sigma} \subset W^{1, \infty}$
to obtain
\[ \| \mathbb{A}^{\varepsilon, 2, \ast}_{s t} \varphi \|_{L^{\infty}} =
   \left\| \int_s^t \delta y^{\varepsilon}_{s r} \cdummy \nabla \mathbb{P} [d
   y^{\varepsilon}_r \cdummy \nabla \varphi] \right\|_{L^{\infty}} \lesssim \|
   \varphi \|_{H^{\theta_0 + 1}} \| Y^{2, \varepsilon}_{s t} \|_{H^{\sigma}
   \otimes H^{\sigma}} . \]
It will be seen in Theorem~\ref{l:11} below that the desired bounds and even
convergence of $(Y^{\varepsilon, 1}, Y^{\varepsilon, 2})$ as an
$H^{\sigma}$-valued rough path in the sense of Definition~\ref{d:rp} hold
true.

Finally, in order to complete the derivation of the rough path formulation of
\eqref{eq:u}, we denote the remaining drift part in \eqref{eq:u} of finite
variation as
\begin{equation}
  \mu^{\varepsilon}_t \assign \mu^{\varepsilon, 1}_t + \mu^{\varepsilon, 2}_t
  \assign \int_0^t [A u^{\varepsilon}_s + b (u_s^{\varepsilon},
  u_s^{\varepsilon})] d s + \int_0^t b (r^{\varepsilon}_s, u_s^{\varepsilon})
  d s. \label{eq:drift}
\end{equation}
Consequently, \eqref{eq:u} rewrites as
\begin{equation}
  \delta  u^{\varepsilon}_{s t} = \delta \mu^{\varepsilon}_{s t}
  +\mathbb{A}^{\varepsilon, 1}_{s t} u^{\varepsilon}_s
  +\mathbb{A}^{\varepsilon, 2}_{s t} u^{\varepsilon}_s + u^{\varepsilon,
  \natural}_{s t}, \label{eq:urp}
\end{equation}
where $\delta f_{s t} = f_t - f_s$ and the equation is understood in the sense
of distributions with a remainder $u^{\varepsilon, \natural}_{s t}$ of order
$o (| t - s |)$. More precisely, we have the following definition.

\begin{definition}
  \label{d:sol}Let $\varepsilon \in (0, 1)$.We say that
  $((\Omega^{\varepsilon}, \mathcal{F}^{\varepsilon},
  (\mathcal{F}^{\varepsilon}_t)_{t \geqslant 0}, \tmmathbf{P}^{\varepsilon}),
  u^{\varepsilon}, r^{\varepsilon}, W^{\varepsilon})$ is a probabilistically
  weak rough path solution to \eqref{eq:u}, \eqref{eq:w}, \eqref{eq:r}
  provided
  \begin{enumerate}
    \item $(\Omega^{\varepsilon}, \mathcal{F}^{\varepsilon},
    (\mathcal{F}^{\varepsilon}_t)_{t \geqslant 0},
    \tmmathbf{P}^{\varepsilon})$ is a stochastic basis with a complete
    right-continuous filtration;
    
    \item $W^{\varepsilon}$ is a cylindrical $(\mathcal{F}^{\varepsilon}_t)_{t
    \geqslant 0}$-Wiener process on $H$;
    
    \item $r^{\varepsilon} \in L^2 (0, T ; H^{\gamma})$
    $\tmmathbf{P}^{\varepsilon}$-a.s. is $(\mathcal{F}^{\varepsilon}_t)_{t
    \geqslant 0}$-adapted and \eqref{eq:r} holds true in the analytically weak
    sense $\tmmathbf{P}^{\varepsilon}$-a.s.;
    
    \item $u^{\varepsilon} \in C_{\tmop{weak}} (0, T ; H) \cap L^2 (0, T ;
    H^1)$ $\tmmathbf{P}^{\varepsilon}$-a.s. is
    $(\mathcal{F}^{\varepsilon}_t)_{t \geqslant 0}$-adapted;
    
    \item $\mathbb{A}^{\varepsilon} = (\mathbb{A}^{\varepsilon, 1},
    \mathbb{A}^{\varepsilon, 2})$ defined via \eqref{eq:A1} and \eqref{eq:A2}
    is an unbounded $\alpha$-rough driver on $(H^m)_{m \in \mathbb{R}}$ in the
    sense of Definition~\ref{d:urd} and satisfying \eqref{eq:a2a22};
    
    \item the remainder given $\tmmathbf{P}^{\varepsilon}$-a.s. by
    \[ u^{\varepsilon, \natural}_{s t} \assign \delta  u^{\varepsilon}_{s t} -
       \delta \mu^{\varepsilon}_{s t} -\mathbb{A}^{\varepsilon, 1}_{s t}
       u^{\varepsilon}_s -\mathbb{A}^{\varepsilon, 2}_{s t} u^{\varepsilon}_s,
       \qquad 0 \leqslant s \leqslant t \leqslant T, \]
    belongs to $C^{p / 3 - \tmop{var}}_{2, \tmop{loc}} ([0, T] ; H^{- 3})$
    $\tmmathbf{P}^{\varepsilon}$-a.s. with $p = 1 / \alpha$, where the drift
    $\mu^{\varepsilon}$ was defined in \eqref{eq:drift}.
  \end{enumerate}
\end{definition}

The above definition is designed with the aim of the passage to the limit
$\varepsilon \rightarrow 0$. In particular, we did not make use of any
regularity claims which cannot be proved uniformly in $\varepsilon$.

\section{Detailed presentation of  main results}\label{s:main}

The key to our strategy lies in the following convergence result, which
consequently also implies the convergence of the unbounded rough drivers
$\mathbb{A}^{\varepsilon} = (\mathbb{A}^{\varepsilon, 1},
\mathbb{A}^{\varepsilon, 2})$ in an appropriate sense. The~proof can be found
in Section~\ref{s:2}.

\begin{theorem}
  \label{l:11}Let $y^{\varepsilon}$ be given by \eqref{eq:y}. Then the
  canonical rough path lift $(Y^{\varepsilon, 1}, Y^{\varepsilon, 2})$ of
  $y^{\varepsilon}$ given by
  \[ (Y^{\varepsilon, 1}, Y^{\varepsilon, 2}) \assign \left( \delta
     y^{\varepsilon}_{s t}, \int_s^t \delta y_{s r}^{\varepsilon} \otimes d
     y_r^{\varepsilon} \right)_{s, t \in [0, T]} \]
  converges as $\varepsilon\to 0$ to a rough path lift $(B^1, B^2)$ of the Brownian motion $B = (-
  C)^{- 1} Q^{1 / 2} W$. {The convergence holds true in $L^q (\Omega ; \mathcal{C}^{\alpha} ([0, T] ;
  H^{\sigma}))$ for all $\alpha \in (1 / 3, 1 / 2)$ and $q \in [1, \infty)$}. The second component $B^2$ is given in terms of
  It{\^o}'s stochastic integration as
  \begin{equation}
    B^2_{s t} \assign \int_s^t \delta B_{s r} \otimes d B_r + (t - s) \int w
    \otimes (- C)^{- 1} w d \mu (w), \label{eq:B2ito}
  \end{equation}
  with $\mu$ being the unique invariant measure of \eqref{eq:w}. Alternatively, $B^2$ is given in terms of Stratonovich's
  stochastic integration as
  \begin{equation}
    B^2_{s t} = \int_s^t \delta B_{s r} \otimes \circ d B_r + (t - s) M,
    \label{eq:B2str}
  \end{equation}
  where for all $k, \ell \in \mathbb{N}$
  \[ \langle M, e_k \otimes e_{\ell} \rangle = \left\langle \frac{1}{2} [(-
     C)^{- 1} Q_{\infty} - Q_{\infty} ((- C)^{- 1})^{\ast}] e_k, e_{\ell}
     \right\rangle \]
  and
  \begin{equation}
    Q_{\infty} \assign \int_0^{\infty} e^{C t} Q (e^{C t})^{\ast} d t.
    \label{eq:mu}
  \end{equation}
  In other words, $M$ is antisymmetric and $(B^1, B^2)$ is a geometric rough
  path.
  
  {Furthermore, with the convention $(Y^{0,1},Y^{0,2}):=(B^{1},B^{2})$,  the mapping
  \[ [0, 1] \rightarrow \mathcal{C}^{\alpha} ([0, T] ; H^{\sigma}), \qquad \varepsilon
     \mapsto (Y^{\varepsilon, 1},Y^{\varepsilon,2})\]
 is H{\"o}lder continuous $\tmmathbf{P}$-a.s. and for all $q\in[1,\infty)$
  \begin{equation}
    \mathbb{E} \left[\sup_{\varepsilon \in (0, 1)} \| Y^{\varepsilon, 1}
    \|_{C^{\alpha}_2 ([0, T] ; H^{\sigma})}^q\right] +\mathbb{E} \left[\sup_{\varepsilon
    \in (0, 1)} \| Y^{\varepsilon, 2} \|_{C^{2 \alpha}_2 ([0, T] ; H^{\sigma}
    \otimes H^{\sigma})}^q\right] \lesssim 1. \label{eq:new5}
  \end{equation}
 In particular, the convergence $(Y^{\varepsilon,1},Y^{\varepsilon,2})\to (B^{1},B^{2})$ as $\varepsilon\to 0$ holds true in $\mathcal{C}^{\alpha} ([0,
  T] ; H^{\sigma})$ $\tmmathbf{P}$-a.s.}
\end{theorem}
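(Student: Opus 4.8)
The plan is to establish the convergence of the canonical rough path lift $(Y^{\varepsilon,1}, Y^{\varepsilon,2})$ via the Kolmogorov criterion for rough paths (Theorem 3.3 in \cite{FH14}), adapted to the infinite-dimensional Hilbert space setting, and to identify the limit through the ergodic theory of the Ornstein--Uhlenbeck process $w^{\varepsilon}$. First I would record the explicit solution formula for $w^{\varepsilon}$, namely $w^{\varepsilon}_t = \varepsilon^{-1/2}\int_0^t e^{C(t-r)/\varepsilon} Q^{1/2} dW_r$, and observe that the rescaled process $\tilde w^{\varepsilon}_t := \varepsilon^{-1/2} w^{\varepsilon}_t$ agrees in law, after time rescaling $t \mapsto t/\varepsilon$, with a stationary-type OU process; the assumption \eqref{eq:6} gives exponential decay of $e^{Ct}$ on $H^{\sigma}$ and hence the existence of the invariant measure $\mu$ and the finiteness of the covariance $Q_{\infty}$ in \eqref{eq:mu}. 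The first component is easy: $Y^{\varepsilon,1}_{st} = \int_s^t \varepsilon^{-1/2} w^{\varepsilon}_r\, dr = (-C)^{-1}\big(\delta(Q^{1/2}W)_{st} - \varepsilon^{1/2}\delta w^{\varepsilon}_{st}\big)$ after integrating the SDE \eqref{eq:w}, so $Y^{\varepsilon,1} \to (-C)^{-1}\delta(Q^{1/2}W)_{st} = \delta B_{st}$ once one controls $\varepsilon^{1/2} w^{\varepsilon}$ in $H^{\sigma}$, which follows from the uniform-in-$\varepsilon$ moment bounds on the stationary OU process together with \eqref{eq:tr}, \eqref{eq:new}.

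The heart of the matter is the second component. Writing $Y^{\varepsilon,2}_{st} = \int_s^t \int_s^r \varepsilon^{-1/2} w^{\varepsilon}_{\theta}\, d\theta \otimes \varepsilon^{-1/2} w^{\varepsilon}_r\, dr$ and substituting $\varepsilon^{-1/2}w^{\varepsilon}_r\, dr = (-C)^{-1}(Q^{1/2}dW_r - \varepsilon^{1/2}dw^{\varepsilon}_r)$ and $\varepsilon^{-1/2}\int_s^r w^{\varepsilon}_{\theta}\, d\theta = \delta B_{sr} + \varepsilon^{1/2}(-C)^{-1}\delta w^{\varepsilon}_{sr}$, I would expand into a main Itô-integral term $\int_s^t \delta B_{sr}\otimes (-C)^{-1}Q^{1/2}dW_r$ plus cross terms and a term of the form $\varepsilon \int_s^t \big((-C)^{-1}w^{\varepsilon}_r\big)\otimes dw^{\varepsilon}_r$. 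The drift-producing term is this last one: using stationarity and the ergodic theorem for the OU process developed in Section~\ref{s:erg}, $\varepsilon \int_s^t (-C)^{-1}w^{\varepsilon}_r \otimes dw^{\varepsilon}_r$ — more precisely its symmetrization coming from the Itô correction in the product rule — converges to $(t-s)\int w \otimes (-C)^{-1}w\, d\mu(w)$, and a change-of-variables argument shows this equals $(t-s)\,(\mathrm{sym})$ of $(-C)^{-1}Q_{\infty}$. Combining with the Stratonovich-to-Itô correction for $\int \delta B \otimes \circ dB$ yields the identifications \eqref{eq:B2ito} and \eqref{eq:B2str}, and the antisymmetry of $M$ — hence the geometricity of $(B^1,B^2)$ — comes from the symmetric part of the finite-variation correction cancelling exactly against the Itô--Stratonovich correction, leaving only the antisymmetric piece $\tfrac12[(-C)^{-1}Q_{\infty} - Q_{\infty}((-C)^{-1})^{\ast}]$; when $C$ is symmetric and commutes with $Q$ one checks directly that $(-C)^{-1}Q_{\infty}$ is symmetric, so $M=0$.

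To upgrade these limits to convergence in $L^q(\Omega; \mathcal{C}^{\alpha}([0,T];H^{\sigma}))$ with the uniform bound \eqref{eq:new5} and a.s.\ Hölder dependence on $\varepsilon$, I would prove moment estimates of the form $\mathbb{E}\|Y^{\varepsilon,1}_{st}\|_{H^{\sigma}}^q \lesssim |t-s|^{q/2}$ and $\mathbb{E}\|Y^{\varepsilon,2}_{st} - Y^{\varepsilon',2}_{st}\|_{H^{\sigma}\otimes H^{\sigma}}^q \lesssim |\varepsilon-\varepsilon'|^{\kappa q}|t-s|^{q}$ (and likewise in the single variable), uniformly in $\varepsilon,\varepsilon' \in [0,1]$, by combining Itô's isometry / Burkholder--Davis--Gundy, Gaussian hypercontractivity (Nelson's estimate), the Hilbert--Schmidt bounds \eqref{eq:tr}, \eqref{eq:new}, and the decay \eqref{eq:6}; then a two-parameter Kolmogorov continuity argument on $\Delta_T \times [0,1]$ gives joint Hölder regularity, from which \eqref{eq:new5} and the pathwise convergence follow. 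The main obstacle I anticipate is the double-integral term in $Y^{\varepsilon,2}$: obtaining the $|t-s|^{2\alpha}$ regularity \emph{uniformly in $\varepsilon$} requires a careful decomposition isolating the singular-in-$\varepsilon$ but finite-variation contribution (which carries the drift) from a genuinely rough-in-time martingale part, and proving a quantitative ($L^q$, rate-in-$\varepsilon$) version of the ergodic theorem for $\int_s^t f(w^{\varepsilon}_r)\,dr$ with $f$ quadratic — this is precisely the content deferred to Section~\ref{s:erg}, and it is where the assumption \eqref{eq:new} (slightly more than trace class) is needed, since one must control the time-oscillations of the quadratic functional at the correct scale.
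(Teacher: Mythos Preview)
Your plan is essentially correct and matches the paper's approach: Kolmogorov's criterion for rough paths (Theorem~3.3 in \cite{FH14}), the identity $y^{\varepsilon}=B-\varepsilon^{1/2}(-C)^{-1}w^{\varepsilon}$ for the first level, a decomposition of $Y^{\varepsilon,2}$ whose finite-variation part is handled by the ergodic theorem of Section~\ref{s:erg}, the It\^o--Stratonovich comparison to identify $M$, and finally a Kolmogorov argument in the parameter $\varepsilon$ for \eqref{eq:new5}. Two small corrections are worth recording: (i) the drift term $\int_s^t w^{\varepsilon}_r\otimes(-C)^{-1}w^{\varepsilon}_r\,dr$ arises in the paper via \emph{pathwise} integration by parts of $\varepsilon^{1/2}\int_s^t \delta y^{\varepsilon}_{sr}\otimes(-C)^{-1}dw^{\varepsilon}_r$ (using $dy^{\varepsilon}_r=\varepsilon^{-1/2}w^{\varepsilon}_r\,dr$), not via an It\^o product-rule correction---your double substitution works too but is less direct; (ii) assumption~\eqref{eq:new} is \emph{not} used in the ergodic theorem (Proposition~\ref{p:erg1} only needs \eqref{eq:6}) but rather in Step~3 to control $\|w^{\varepsilon}_t-w^{\eta}_t\|_{H^{\sigma}}$ via \eqref{eq:new2}, which is what yields the H\"older dependence on $\varepsilon$.
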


\begin{remark}
  \label{r:1}Note that $M$ is antisymmetric so the limit $(B^1, B^2)$ is
  indeed a geometric rough path, as it was expected anyway since it is a limit
  of canonical lifts of smooth paths.
  
  Moreover, if $C$ and $Q$ commute and $C$ is symmetric then $Q_{\infty} =
  \frac{1}{2} (- C)^{- 1} Q$ and $M = 0$. Hence, we recover the result of
  {\cite{DP22}} obtained by a completely different method.
  
  Finally, if $Q$ is an identity on $\mathbb{R}^d$ and $- C$ is a $d \times d$
  matrix so that all its eigenvalues have strictly positive real part, we
  recover the result of Theorem 3.8 {\cite{FH14}}. Our result is in fact stronger since we prove that the convergence holds almost surely and we have the uniform bound \eqref{eq:new5}, whereas in \cite{FH14} the supremum with respect to $\varepsilon$ is outside the expectation. In \cite{FH14}, the
  non-triviality of $M$ (in our notation) originated in the non-symmetry of
  $C$. However, our result shows that $M$ can be nontrivial even for symmetric
  $C$ provided $C$ does not commute with $Q$.
\end{remark}

Based on Theorem~\ref{l:11}, we are able to identify the limit of the rough
path formulation of \eqref{eq:u}, namely \eqref{eq:urp}, as the corresponding
rough path formulation of \eqref{eq:ulim}. As the next step, we define the
notion of probabilistically weak rough path solution to \eqref{eq:ulim}
analogously to Definition~\ref{d:sol}. To this end, we define the limit
unbounded rough driver $\mathbb{A}= (\mathbb{A}^1, \mathbb{A}^2)$ as in
\eqref{eq:A1}, \eqref{eq:A2} but using the limit rough path $(B^1, B^2)$ from
Theorem~\ref{l:11} instead of $(Y^{\varepsilon, 1}, Y^{\varepsilon, 2})$. More
precisely, we let
\begin{equation}
  \mathbb{A}^1_{s t} \varphi \assign -\mathbb{P} [B^1_{s t} \cdummy \nabla
  \varphi], \qquad \mathbb{A}^2_{s t} \varphi \assign A^2 (B^2_{s t},
  \varphi), \label{eq:ab}
\end{equation}
where $A^2$ was defined in \eqref{eq:2a2}. The limit drift is given
analogously to \eqref{eq:drift} as
\begin{equation}
  \mu_t \assign \int_0^t [A u_s + b (u_s, u_s)] d s + \int_0^t b \left(
  \overline{r}, u_s \right) d s, \label{eq:drift2}
\end{equation}
with the It{\^o}--Stokes velocity $\bar{r}$ defined in \eqref{eq:itostokes}.

\begin{definition}
  \label{d:sol2}We say that $((\Omega, \mathcal{F}, (\mathcal{F}_t)_{t
  \geqslant 0}, \tmmathbf{P}), u, B)$ is a probabilistically weak rough path
  solution to \eqref{eq:ulim} provided
  \begin{enumerate}
    \item $(\Omega, \mathcal{F}, (\mathcal{F}_t)_{t \geqslant 0},
    \tmmathbf{P})$ is a stochastic basis with a complete right-continuous
    filtration;
    
    \item $B$ is an $(\mathcal{F}_t)_{t \geqslant 0}$-Wiener process on $H$
    with covariance $(- C)^{- 1} Q ((- C)^{- 1})^{\ast}$;
    
    \item $u \in C_{\tmop{weak}} (0, T ; H) \cap L^2 (0, T ; H^1)$
    $\tmmathbf{P}$-a.s. is $(\mathcal{F}_t)_{t \geqslant 0}$-adapted;
    
    \item the remainder given by
    \[ u^{\natural}_{s t} \assign \delta  u_{s t} - \delta \mu_{s t}
       -\mathbb{A}^1_{s t} u_s -\mathbb{A}^2_{s t} u_s, \qquad 0 \leqslant s
       \leqslant t \leqslant T, \]
    belongs to $C^{p / 3 - \tmop{var}}_{2, \tmop{loc}} ([0, T] ; H^{- 3})$
    $\tmmathbf{P}$-a.s. with $p = 1 / \alpha$. Here, the drift $\mu$ was
    defined in \eqref{eq:drift2} whereas the unbounded rough driver
    $\mathbb{A}= (\mathbb{A}^1, \mathbb{A}^2)$ in \eqref{eq:ab}.
  \end{enumerate}
\end{definition}

Our main result reads as follows, the proof spreads over Section~\ref{s:2},
Section~\ref{s:IS} and Section~\ref{s:tight1}.

\begin{theorem}
  \label{thm:main}Let the initial values $(u_0^{\varepsilon})_{\varepsilon \in
  (0, 1)}$ and $(v_0^{\varepsilon})_{\varepsilon \in (0, 1)}$ be given
  {so that both $(u_0^{\varepsilon})_{\varepsilon \in (0, 1)}$
  and $(\varepsilon^{1 / 2} v^{\varepsilon}_0)_{\varepsilon \in (0, 1)}$ are}
  bounded in $H$ uniformly in $\varepsilon$. There exists a family
  $((\Omega^{\varepsilon}, \mathcal{F}^{\varepsilon},
  (\mathcal{F}^{\varepsilon}_t)_{t \geqslant 0}, \tmmathbf{P}^{\varepsilon}),
  u^{\varepsilon}, r^{\varepsilon}, W^{\varepsilon})$, $\varepsilon \in (0,
  1),$ of probabilistically weak rough path solutions to \eqref{eq:u},
  \eqref{eq:w}, \eqref{eq:r}, such that
  \begin{equation}
    \| u^{\varepsilon} \|_{L^{\infty}_{T} H} + \| u^{\varepsilon} \|_{L^2_{T} H^1}
    \lesssim 1 \quad \tmmathbf{P}^{\varepsilon} \text{-a.s.\quad and} \qquad
    \mathbb{E}^{\varepsilon} [\| r^{\varepsilon} \|_{L^2_{T} H^{\gamma}}^2]
    \lesssim 1 \label{eq:bd}
  \end{equation}
  with implicit constants independent of $\varepsilon$.
  
  For every such family of probabilistically weak rough path solutions there
  exists a subsequence, still denoted by $\varepsilon \rightarrow 0$, such
  that $(u^{\varepsilon}, (- C)^{- 1} Q^{1 / 2} W^{\varepsilon})$ converges in
  law to a probabilistically weak rough path solution to \eqref{eq:ulim}.
\end{theorem}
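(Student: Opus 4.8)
The plan is to prove Theorem~\ref{thm:main} in two stages: first establish the uniform-in-$\varepsilon$ estimates \eqref{eq:bd} together with the compactness (tightness) of the relevant laws, and then pass to the limit using a Skorokhod representation combined with the rough path stability provided by Theorem~\ref{l:11}. The existence of the family of probabilistically weak rough path solutions for each fixed $\varepsilon$ has already been reduced to classical stochastic compactness arguments (Galerkin plus tightness, as in \cite{FG95}); what remains is to make the estimates quantitative and uniform.

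\textbf{Step 1: uniform bounds.} I would first test \eqref{eq:u} against $u^{\varepsilon}$. Since $\langle b(u^{\varepsilon}+\varepsilon^{-1/2}w^{\varepsilon}+r^{\varepsilon}, u^{\varepsilon}), u^{\varepsilon}\rangle = 0$ by the cancellation property, the nonlinear and the singular $\varepsilon^{-1/2}$ terms drop out entirely, and a Gr\"onwall argument using the dissipativity of $A$ yields the pathwise bound $\|u^{\varepsilon}\|_{L^{\infty}_T H} + \|u^{\varepsilon}\|_{L^2_T H^1}\lesssim \|u_0^{\varepsilon}\|_H\lesssim 1$, $\tmmathbf{P}^{\varepsilon}$-a.s. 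For $r^{\varepsilon}$ I would test \eqref{eq:r} against $r^{\varepsilon}$, using \eqref{eq:999} to get the coercive term $\varepsilon^{-1}\|r^{\varepsilon}\|_{H^{\gamma}}^2$; the terms involving $w^{\varepsilon}$ (which are $O(\varepsilon^{-1/2})$) and the bilinear terms are controlled by the moment bounds on $w^{\varepsilon}$ (stationary Ornstein--Uhlenbeck, all $L^q$ moments finite, uniform in $\varepsilon$) and the already-established bound on $u^{\varepsilon}$; after taking expectation the singular $\varepsilon^{-1}$ prefactor beats the $\varepsilon^{-1/2}$ source, giving $\mathbb{E}^{\varepsilon}[\|r^{\varepsilon}\|_{L^2_T H^{\gamma}}^2]\lesssim 1$. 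This matches exactly the statement that the bound for $r^{\varepsilon}$ holds only in expectation, which is flagged in the introduction.

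\textbf{Step 2: tightness and rough path compactness.} I would consider the joint law of the tuple $(u^{\varepsilon}, r^{\varepsilon}, w^{\varepsilon}, Y^{\varepsilon,1}, Y^{\varepsilon,2}, W^{\varepsilon})$ on a suitable product Polish space. For $u^{\varepsilon}$: the uniform energy bound gives tightness in $C_{\mathrm{weak}}(0,T;H)\cap L^2(0,T;H^1_{\mathrm{loc}})$ once time regularity is available, and this time regularity comes from the rough path formulation \eqref{eq:urp} — the drift $\mu^{\varepsilon}$ is controlled in $L^2_T H^{-\theta_0}$ by Step~1, the rough driver terms $\mathbb{A}^{\varepsilon,i}u^{\varepsilon}_s$ are controlled by \eqref{eq:new5} of Theorem~\ref{l:11} together with the energy bound, and the remainder $u^{\varepsilon,\natural}$ lies in a $p/3$-variation space (again after expectation). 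For the rough paths $(Y^{\varepsilon,1},Y^{\varepsilon,2})$: Theorem~\ref{l:11} directly provides the uniform moment bound \eqref{eq:new5} in $\mathcal{C}^{\alpha'}$ for $\alpha'$ slightly larger than $\alpha$, hence tightness in $\mathcal{C}^{\alpha}([0,T];H^{\sigma})$ by compact embedding, and in fact a.s.\ convergence. The process $w^{\varepsilon}$ is stationary and converges to white noise in negative Sobolev spaces (and, crucially, the time-integrated quantities $\int(-C)^{-1}b(w^{\varepsilon},w^{\varepsilon})\,ds$ converge to $t\bar r$ by the ergodic theorem of Section~\ref{s:erg}); $W^{\varepsilon}$ is tight trivially. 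Prokhorov then gives a weakly convergent subsequence.

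\textbf{Step 3: Skorokhod and identification of the limit.} By the Jakubowski--Skorokhod representation theorem (needed because the spaces are not metrizable in the weak topology) I would pass to a new probability space carrying random variables with the same laws and converging a.s.\ in the product topology: $u^{\varepsilon}\to u$, $r^{\varepsilon}\to$ (something whose time-average is $\bar r\, t$), $Y^{\varepsilon,i}\to B^i$ with $(B^1,B^2)$ the lift of Theorem~\ref{l:11}, $W^{\varepsilon}\to W$. One then checks each term of \eqref{eq:urp} converges to the corresponding term of the rough path formulation of \eqref{eq:ulim}: the energy terms pass by lower semicontinuity and standard nonlinear-compactness arguments for Navier--Stokes; the rough driver terms pass because $\mathbb{A}^{\varepsilon}$ depends continuously on $(Y^{\varepsilon,1},Y^{\varepsilon,2})$ through the explicit formulas \eqref{eq:A1}--\eqref{eq:A2}; the drift term $\int_0^t b(r^{\varepsilon}_s,u^{\varepsilon}_s)\,ds$ converges to $\int_0^t b(\bar r, u_s)\,ds$ via the averaging/ergodic result of Section~\ref{s:av} — this is the step that produces the It\^o--Stokes drift. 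The remainder $u^{\natural}$ is shown to lie in the right $p/3$-variation space by a uniform (in expectation) estimate passing through Fatou. Finally one verifies $B$ is a Wiener process with the stated covariance and that the filtration conditions of Definition~\ref{d:sol2} hold.

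\textbf{Main obstacle.} The hardest part is handling the fact that the a priori control on $r^{\varepsilon}$, and consequently on the remainder $u^{\varepsilon,\natural}$ and on the time regularity of $u^{\varepsilon}$, is available \emph{only in expectation} rather than pathwise. This breaks a purely pathwise rough path stability argument: one cannot simply invoke continuity of the solution map on a full-measure set of driving rough paths. The resolution is the rough path variant of stochastic compactness indicated in the introduction — one must carefully arrange the Skorokhod construction so that the uniform-in-expectation bounds survive the passage to the limit (via uniform integrability / Vitali), and identify the limiting remainder as genuinely of order $o(|t-s|)$ in the appropriate local $p/3$-variation sense on the new probability space. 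Coupling this probabilistic compactness with the deterministic a.s.\ rough path convergence of Theorem~\ref{l:11}, and making sure the two are realized on the same new probability space, is the delicate bookkeeping that the proof in Sections~\ref{s:IS}--\ref{s:tight1} must carry out.
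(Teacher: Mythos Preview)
Your proposal is correct and follows essentially the same architecture as the paper's proof (Sections~\ref{s:IS}--\ref{s:tight1}): energy estimate for $u^\varepsilon$, an $L^2$-in-expectation bound for $r^\varepsilon$, remainder and time-regularity estimates via the rough formulation, then Jakubowski--Skorokhod and identification of each term. Two refinements to be aware of. First, in the $r^\varepsilon$ estimate the bilinear term $\langle b(v^\varepsilon,\varepsilon^{-1/2}w^\varepsilon),r^\varepsilon\rangle$ contains $r^\varepsilon$ itself through $v^\varepsilon$, and the paper closes this not just from the $u^\varepsilon,w^\varepsilon$ bounds but by separately controlling $\varepsilon^2\mathbb E\|v^\varepsilon\|_H^4$ directly from \eqref{eq:v}. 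Second, and more importantly, the mechanism that upgrades the in-expectation control on $r^\varepsilon$ to something usable pathwise is \emph{not} uniform integrability/Vitali: the paper includes $r^\varepsilon$ in the Skorokhod tuple equipped with the weak topology on $L^2(0,T;H^\gamma)$, so that on the new space $\bar r^\varepsilon$ converges weakly $\overline{\mathbf P}$-a.s.\ and is therefore a.s.\ bounded, yielding a random but $\varepsilon$-uniform constant $N(\bar\omega)$ with $\sup_\varepsilon\|\bar r^\varepsilon\|_{L^2_T H^\gamma}^2\le N$; this pathwise bound is what makes the remainder estimate \eqref{eq:3.3} uniform a.s.\ on the new space and allows the lower-semicontinuity/Fatou argument for $\bar u^\natural$ to run $\omega$-by-$\omega$.
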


In view of the above discussion, the existence of a probabilistically weak
rough path solution to \eqref{eq:u}, \eqref{eq:w}, \eqref{eq:r} for every
$\varepsilon \in (0, 1)$ can be proved by classical arguments via Galerkin
approximation together with the stochastic compactness method. We omit the
details of the construction but we derive the uniform estimate \eqref{eq:bd}
in Section~\ref{s:bdu} and Section~\ref{s:bdr}.

\begin{remark}
  \label{r:pw}Furthermore, we note that by Skorokhod representation theorem,
  for every family $$((\Omega^{\varepsilon}, \mathcal{F}^{\varepsilon},
  (\mathcal{F}^{\varepsilon}_t)_{t \geqslant 0}, \tmmathbf{P}^{\varepsilon}),
  u^{\varepsilon}, r^{\varepsilon}, W^{\varepsilon}),\qquad \varepsilon \in (0,
  1),$$  of probabilistically weak rough path solutions to \eqref{eq:u},
  \eqref{eq:w}, \eqref{eq:r}, there exists a sequence $\varepsilon \rightarrow
  0$ and a family $((\bar{\Omega}, \bar{\mathcal{F}},
  (\bar{\mathcal{F}}^{\varepsilon}_t)_{t \geqslant 0},
  \overline{\tmmathbf{P}}), \bar{u}^{\varepsilon}, \bar{r}^{\varepsilon},
  \bar{W}^{\varepsilon})$ of probabilistically weak rough path solutions such
  that the laws of $(u^{\varepsilon}, r^{\varepsilon}, W^{\varepsilon})$ and
  $(\bar{u}^{\varepsilon}, \bar{r}^{\varepsilon}, \bar{W}^{\varepsilon})$
  coincide. Thus, without loss of generality, we assume from now on that the
  approximate solutions $(u^{\varepsilon}, r^{\varepsilon}, W^{\varepsilon})$
  in Theorem~\ref{thm:main} are defined on a common probability space
  $(\Omega, \mathcal{F}, \tmmathbf{P})$.
\end{remark}

\section{Ergodicity of the Ornstein--Uhlenbeck process}\label{s:erg}

In Section~\ref{s:2} as well as in Section~\ref{s:av}, we will need a suitable
ergodic theorem in order to deduce the convergence of the second component of
the rough path $Y^{\varepsilon, 2}$ and the convergence towards the
It{\^o}--Stokes velocity $\bar{r}$ given by \eqref{eq:itostokes}. More
precisely, letting $\tilde{w}_t = w^{\varepsilon}_{\varepsilon t}$ both
problems reduce to the convergence of the ergodic average
\[ \lim_{t \rightarrow \infty} \frac{1}{t} \int_0^t F (\tilde{w}_s) d s = \int
   F (w) d \mu (w), \]
where $\mu$ is the (unique) invariant measure for $\tilde{w}$ and $F$ is a
certain quadratic function. We observe that $\tilde{w}$ solves
\begin{equation}
  d \tilde{w} = C \tilde{w} d t + Q^{1 / 2} d \tilde{W} \label{eq:tildew}
\end{equation}
with some cylindrical Wiener process $\tilde{W}$. As a matter of fact, since
in view of Remark~\ref{r:pw}, $w^{\varepsilon}$ actually satisfies
\eqref{eq:w} with a Wiener process $W^{\varepsilon}$, the rescaling
$\tilde{W}$ also depends on $\varepsilon$. However, this dependence is
irrelevant for the sequel and we drop it for notational simplicity, as we only
use the law of $\tilde{W}$ in our arguments.

We denote by $\tilde{w} (w)$ the unique solution to \eqref{eq:tildew} with the
initial condition $w \in H$. It is Gaussian and if $w \in H^{\sigma}$ then in
view of \eqref{eq:6} and \eqref{eq:tr} it satisfies the global-in-time
estimate
\begin{equation}
  \sup_{t \in [0, \infty)} \mathbb{E} [\| \tilde{w}_t (w) \|^2_{H^{\sigma}}]
  \lesssim \| w \|^2_{H^{\sigma}} + \int_0^{\infty} \| (- \Delta)^{\sigma / 2}
  e^{C t} Q^{1 / 2} \|_{L_2 (H)}^2 d t < \infty . \label{est:tilew2}
\end{equation}
This can be obtained directly from the mild formulation. Moreover,
\eqref{eq:tildew} generates a Markov process with the unique invariant measure
$\mu =\mathcal{N} (0, Q_{\infty})$ with $Q_{\infty}$ defined in \eqref{eq:mu},
see e.g. Theorem~11.17, Theorem 11.20 {\cite{DPZa14}}.

The desired ergodic theorem needed for Section~\ref{s:2} reads as follows.

\begin{proposition}
  \label{p:erg1}Let $F : H^{\sigma} \rightarrow H^{\sigma} \otimes H^{\sigma}$
  be quadratic in the sense that $F (w) = a_1 (w) \otimes a_2 (w)$ for some
  bounded linear operators $a_1, a_2 : H^{\sigma} \rightarrow H^{\sigma}
  \otimes H^{\sigma}$. Let $\tilde{w}$ be a solution to \eqref{eq:tildew} with
  an initial condition $\tilde{w}_0 \in H^{\sigma}$. Then for all $t \in [0,
  \infty)$
  \begin{equation}
    \left\| \mathbb{E} [F (\tilde{w}_t)] - \int F (w) d \mu (w)
    \right\|_{H^{\sigma} \otimes H^{\sigma}} \lesssim e^{- \iota t} (1 + \|
    \tilde{w}_0 \|_{H^{\sigma}}^2), \qquad t \geqslant 0, \label{eq:expmix}
  \end{equation}
\rmb{where $\iota$ is the parameter from assumption \eqref{eq:6},} and
  \begin{equation}
    \mathbb{E} \left[ \left\| \frac{1}{t} \int_0^t F (\tilde{w}_s) d s - \int
    F (w) d \mu (w) \right\|^2_{H^{\sigma} \otimes H^{\sigma}} \right]
    \lesssim \frac{1}{t} (1 + \| \tilde{w}_0 \|_{H^{\sigma}}^4) .
    \label{eq:erg}
  \end{equation}
\end{proposition}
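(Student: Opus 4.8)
The plan is to exploit the explicit Gaussian structure of the Ornstein--Uhlenbeck process $\tilde w$ together with the exponential decay \eqref{eq:6} of the semigroup. The first bound \eqref{eq:expmix} is essentially a statement about convergence of second moments. Writing the mild solution $\tilde w_t(w) = e^{Ct}w + \int_0^t e^{C(t-s)}Q^{1/2}\,d\tilde W_s$, bilinearity of $F$ reduces the computation to controlling the covariance structure: $\mathbb E[F(\tilde w_t)] = \mathbb E[a_1(\tilde w_t)\otimes a_2(\tilde w_t)]$ involves $\mathbb E[\tilde w_t \otimes \tilde w_t]$ (after pulling out the bounded operators $a_1,a_2$), which splits into the deterministic part $e^{Ct}w\otimes e^{Ct}w$ and the stochastic covariance $\int_0^t e^{Cs}Q(e^{Cs})^\ast\,ds \to Q_\infty$. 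The deterministic part is bounded by $e^{-2\iota t}\|w\|_{H^\sigma}^2$ using \eqref{eq:6}; the tail $\int_t^\infty e^{Cs}Q(e^{Cs})^\ast\,ds$ is bounded in $H^\sigma\otimes H^\sigma$ by $e^{-\iota t}$ using \eqref{eq:6} and \eqref{eq:tr} (and Hilbert--Schmidt/trace estimates). There is a cross term, which is estimated by Cauchy--Schwarz to be of order $e^{-\iota t}(1+\|w\|_{H^\sigma})$. Since $\int F\,d\mu(w) = \mathbb E[a_1(Z)\otimes a_2(Z)]$ with $Z\sim\mathcal N(0,Q_\infty)$, combining these gives \eqref{eq:expmix}, with the quadratic dependence on $\|\tilde w_0\|_{H^\sigma}$ coming from the deterministic term.

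For the quantitative ergodic average \eqref{eq:erg}, I would expand the square and use Fubini:
\[
\mathbb E\left[\left\|\tfrac1t\int_0^t \big(F(\tilde w_s) - \textstyle\int F\,d\mu\big)\,ds\right\|^2\right] = \frac{1}{t^2}\int_0^t\int_0^t \mathbb E\big\langle F(\tilde w_s)-\textstyle\int F\,d\mu,\ F(\tilde w_r)-\textstyle\int F\,d\mu\big\rangle\,dr\,ds.
\]
The integrand should decay like $e^{-\iota|s-r|}$ times a polynomial in $1+\|\tilde w_0\|_{H^\sigma}^2$. To see this, condition on $\mathcal F_{s\wedge r}$, say $r \le s$: by the Markov property $\mathbb E[F(\tilde w_s)\mid\mathcal F_r]$ is of the form handled in \eqref{eq:expmix} but with random initial datum $\tilde w_r$, so it equals $\int F\,d\mu + O(e^{-\iota(s-r)}(1+\|\tilde w_r\|_{H^\sigma}^2))$ almost surely. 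Pairing against $F(\tilde w_r)-\int F\,d\mu$ (which is $O(1+\|\tilde w_r\|_{H^\sigma}^2)$ in norm) and taking expectations gives a bound $\lesssim e^{-\iota(s-r)}\,\mathbb E[(1+\|\tilde w_r\|_{H^\sigma}^2)^2] \lesssim e^{-\iota(s-r)}(1+\|\tilde w_0\|_{H^\sigma}^4)$, where the last step uses the uniform-in-time fourth-moment bound — itself following from \eqref{est:tilew2} and Gaussianity (Gaussian hypercontractivity turns the second-moment estimate into a fourth-moment estimate). Integrating $e^{-\iota|s-r|}$ over $[0,t]^2$ yields a factor of order $t$, and dividing by $t^2$ gives the $1/t$ rate in \eqref{eq:erg}.

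The main technical obstacle is the careful bookkeeping in the covariance-level estimates: one must correctly propagate the bounded operators $a_1, a_2$ through the tensor products and track that all the relevant operator norms and Hilbert--Schmidt norms are controlled by the standing assumptions \eqref{eq:6}, \eqref{eq:tr} in the $H^\sigma\otimes H^\sigma$ topology rather than merely in $H\otimes H$. A secondary subtlety is making the Markov-property conditioning argument in \eqref{eq:erg} rigorous when the "initial condition" $\tilde w_r$ is random — this requires that the estimate \eqref{eq:expmix} hold pathwise in the initial datum (which it does, as stated) and that $\tilde w_r$ be independent of the driving noise on $[r,s]$, which is the Markov property for the OU semigroup. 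Everything else is routine Gaussian computation.
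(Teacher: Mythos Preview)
Your argument is correct. For \eqref{eq:erg} it coincides with the paper's proof almost verbatim: expand the square, use Fubini and the Markov property to reduce to $\mathbb E\big[\langle F(\tilde w_r)-\bar F,\ P_{s-r}(F-\bar F)(\tilde w_r)\rangle\big]$, apply \eqref{eq:expmix} with the random initial datum $\tilde w_r$, control fourth moments via Gaussianity and \eqref{est:tilew2}, then integrate $e^{-\iota(s-r)}$ over the simplex to extract the factor $1/t$.

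For \eqref{eq:expmix}, however, you take a different route from the paper. You compute $\mathbb E[F(\tilde w_t)]$ directly from the mild formula, splitting $\tilde w_t$ into its mean $e^{Ct}\tilde w_0$ and a centered Gaussian with covariance $Q_t=\int_0^t e^{Cs}Q(e^{Cs})^\ast ds$, then comparing $Q_t$ with $Q_\infty$ via the tail integral; incidentally, the cross terms you worry about actually vanish because the stochastic part is centered. The paper instead exploits the invariance of $\mu$: writing $\int F\,d\mu=\int P_tF\,d\mu$ and subtracting under the integral, the key identity is $\tilde w_t(\tilde w_0)-\tilde w_t(w)=e^{Ct}(\tilde w_0-w)$, so that $F(\tilde w_t)-F(\tilde w_t(w))$ factors through this exponentially contracting difference by bilinearity. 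Your approach is more explicitly Gaussian and in fact yields the slightly sharper rate $e^{-2\iota t}$; the paper's coupling argument is somewhat more robust, as it would extend to any Markov process whose trajectories with common noise contract exponentially, without needing to identify $\mu$ or compute covariances.
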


\begin{proof}
  {Let $(f_k)_{k \in \mathbb{N}}$ be an orthonormal basis of
  $H^{\sigma}$.} Recall that $\tilde{w}$ is Markov and denote its Markov
  semigroup by $(P_t)_{t \geqslant 0}$. It acts generally on a function
  \[ H^{\sigma} \rightarrow H^{\sigma} \otimes H^{\sigma}, \qquad w \mapsto
     \varphi (w) = \sum_{k, \ell \in \mathbb{N}} \langle \varphi (w), f_k
     \otimes f_{\ell} \rangle_{H^{\sigma} \otimes H^{\sigma}} f_k \otimes
     f_{\ell} \]
  component-wise as
  \[ (P_t \varphi) (w) \assign \mathbb{E} [\varphi (\tilde{w}_t (w))] =
     \sum_{k, \ell \in \mathbb{N}} \mathbb{E} [\langle \varphi (\tilde{w}_t
     (w)), f_k \otimes f_{\ell} \rangle_{H^{\sigma} \otimes H^{\sigma}}] f_k
     \otimes f_{\ell}, \]
  where $\tilde{w}_t (w)$ denotes the solution to \eqref{eq:tildew} with the
  initial condition $w \in H^{\sigma}$. In order to prove \eqref{eq:expmix} we
  now write
  \[ \mathbb{E} [F (\tilde{w}_t)] - \int F (w) d \mu (w) =\mathbb{E} [F
     (\tilde{w}_t)] - \int (P_t F) (w) d \mu (w) = \int \mathbb{E} [F
     (\tilde{w}_t) - F (\tilde{w}_t (w))] d \mu (w) \]
  \[ = \int \mathbb{E} [a_1 (\tilde{w}_t) \otimes a_2 (\tilde{w}_t -
     \tilde{w}_t (w)) + a_1 (\tilde{w}_t - \tilde{w}_t (w)) \otimes a_2
     (\tilde{w}_t (w))] d \mu (w) \]
  and using the fact that $\tilde{w}_t - \tilde{w}_t (w) = e^{C t}
  (\tilde{w}_0 - w)$ we further obtain
  \[ = e^{- \iota t} \int \mathbb{E} [a_1 (\tilde{w}_t) \otimes a_2 (e^{\iota
     t} e^{C t} (\tilde{w}_0 - w)) + a_1 (e^{\iota t} e^{C t} (\tilde{w}_0 -
     w)) \otimes a_2 (\tilde{w}_t (w))] d \mu (w) . \]
  Since the above integral is controlled by the assumption on $a_1, a_2$,
  \eqref{eq:6} and the integrability of the invariant measure $\mu$ as
  \[ \left\| \int \mathbb{E} \left[ a_1 (\tilde{w}_t) \otimes a_2 (e^{\iota t}
     e^{C t} (\tilde{w}_0 - w)) + a_1 (e^{\iota t} e^{C t} (\tilde{w}_0 - w))
     \otimes a_2 (\tilde{w}_t (w)) \right] d \mu (w) \right\|_{H^{\sigma}
     \otimes H^{\sigma}} \]
  \[ \lesssim \int \mathbb{E} [\| \tilde{w}_t \|_{H^{\sigma}} \| \tilde{w}_0 -
     w \|_{H^{\sigma}} + \| \tilde{w}_0 - w \|_{H^{\sigma}} \| \tilde{w}_t (w)
     \|_{H^{\sigma}}] d \mu (w) \lesssim (1 + \| \tilde{w}_0
     \|_{H^{\sigma}}^2), \]
  \eqref{eq:expmix} follows.
  
  For \eqref{eq:erg}, we first define $\bar{F} \assign \int F (w) d \mu (w)$
  and write
  \[ \mathbb{E} \left[ \left\| \frac{1}{t} \int_0^t F (\tilde{w}_s) d s -
     \bar{F} \right\|_{H^{\sigma} \otimes H^{\sigma}}^2 \right] =
     \frac{2}{t^2} \int_0^t \int_r^t \mathbb{E} [\langle F (\tilde{w}_s) -
     \bar{F}, F (\tilde{w}_r) - \bar{F} \rangle_{H^{\sigma} \otimes
     H^{\sigma}}] d s d r. \]
  Since $r \leqslant s$, it holds by the Markov property
  \[ \mathbb{E} [\langle F (\tilde{w}_s) - \bar{F}, F (\tilde{w}_r) - \bar{F}
     \rangle_{H^{\sigma} \otimes H^{\sigma}}] =\mathbb{E} [\langle F
     (\tilde{w}_r) - \bar{F}, \mathbb{E} [F (\tilde{w}_s) - \bar{F} |
     \mathcal{F}_r \nobracket] \rangle_{H^{\sigma} \otimes H^{\sigma}}] \]
  \[ =\mathbb{E} [\langle F (\tilde{w}_r) - \bar{F}, (P_{s - r} (F (\cdummy) -
     \bar{F})) (\tilde{w}_r) \rangle_{H^{\sigma} \otimes H^{\sigma}}] \]
  \[ \leqslant (\mathbb{E} [\| F (\tilde{w}_r) - \bar{F} \|_{H^{\sigma}
     \otimes H^{\sigma}}^2])^{1 / 2} (\mathbb{E} [\| (P_{s - r} (F (\cdummy) -
     \bar{F})) (\tilde{w}_r) \|_{H^{\sigma} \otimes H^{\sigma}}^2])^{1 / 2} .
  \]
  By assumption on $F$ and \eqref{est:tilew2} we have
  \[ \sup_{r \in [0, \infty)} \mathbb{E} [\| F (\tilde{w}_r) - \bar{F}
     \|_{H^{\sigma} \otimes H^{\sigma}}^2] \lesssim 1 +\mathbb{E} [\|
     \tilde{w}_r \|_{H^{\sigma}}^4] \lesssim 1 + \| \tilde{w}_0
     \|_{H^{\sigma}}^4, \]
  whereas by \eqref{eq:expmix} and \eqref{est:tilew2} we obtain
  \[ \mathbb{E} [\| (P_{s - r} (F (\cdummy) - \bar{F})) (\tilde{w}_r)
     \|_{H^{\sigma} \otimes H^{\sigma}}^2] \lesssim e^{- 2 \iota (s - r)} (1
     +\mathbb{E} [\| \tilde{w}_r \|_{H^m}^2])^2 \lesssim e^{- 2 \iota (s - r)}
     (1 + \| \tilde{w}_0 \|_{H^{\sigma}}^4) . \]
  Therefore,
  \[ \mathbb{E} \left[ \left\| \frac{1}{t} \int_0^t F (\tilde{w}_s) d s -
     \bar{F} \right\|_{H^{\sigma} \otimes H^{\sigma}}^2 \right] \lesssim
     \frac{1}{t^2} \int_0^t \int_r^t e^{- \iota (s - r)} d s d r (1 + \|
     \tilde{w}_0 \|_{H^{\sigma}}^4) \lesssim \frac{1}{t} (1 + \| \tilde{w}_0
     \|_{H^{\sigma}}^4) \]
  which completes the proof.
\end{proof}

The same proof applies to other quadratic functions $F$. For instance, in
Section~\ref{s:av} we make use the following variant.

\begin{corollary}
  \label{p:erg}Let $F : H^{\sigma} \rightarrow H$ be quadratic in the sense
  that there exists a bounded bilinear operators $a : H^{\sigma} \times
  H^{\sigma} \rightarrow H$ such that $F (w) = a (w, w)$. Let $\tilde{w}$ be a
  solution to \eqref{eq:tildew} with an initial condition $\tilde{w}_0 \in
  H^{\sigma}$. Then for all $t \in [0, \infty)$
  \[ \mathbb{E} \left[ \left\| \frac{1}{t} \int_0^t F (\tilde{w}_s) d s - \int
     F (w) d \mu (w) \right\|^2_H \right] \lesssim \frac{1}{t} (1 + \|
     \tilde{w}_0 \|_{H^{\sigma}}^4) . \]
\end{corollary}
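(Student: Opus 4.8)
The plan is to mimic the proof of Proposition~\ref{p:erg1} essentially verbatim, with the ambient Hilbert space $H^{\sigma} \otimes H^{\sigma}$ replaced by $H$. The key point is that the only structural features of the quadratic nonlinearity $F$ used in the proof of Proposition~\ref{p:erg1} are: bilinearity (to expand the difference $F(\tilde w_t) - F(\tilde w_t(w))$ using $\tilde w_t - \tilde w_t(w) = e^{Ct}(\tilde w_0 - w)$ and the exponential decay \eqref{eq:6} of the semigroup on $H^{\sigma}$), boundedness of the bilinear map (to control the resulting terms pointwise), and a quartic a priori moment bound for $\tilde w$ in $H^{\sigma}$, which is furnished by \eqref{est:tilew2}. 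All three are available here: the bilinear map $a : H^{\sigma} \times H^{\sigma} \to H$ is assumed bounded, so $\|F(w_1) - F(w_2)\|_H \lesssim (\|w_1\|_{H^\sigma} + \|w_2\|_{H^\sigma}) \|w_1 - w_2\|_{H^\sigma}$.

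The steps I would carry out are as follows. First I would record the intermediate exponential-mixing estimate, the analogue of \eqref{eq:expmix}: writing $\mathbb{E}[F(\tilde w_t)] - \int F\,d\mu = \int \mathbb{E}[F(\tilde w_t) - F(\tilde w_t(w))]\,d\mu(w)$ and using bilinearity of $a$ together with $\tilde w_t - \tilde w_t(w) = e^{Ct}(\tilde w_0 - w)$, the decay \eqref{eq:6}, boundedness of $a$ and integrability of $\mu$, one gets $\|\mathbb{E}[F(\tilde w_t)] - \int F\,d\mu\|_H \lesssim e^{-\iota t}(1 + \|\tilde w_0\|_{H^\sigma}^2)$. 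Second, I would expand the time average as
\[
\mathbb{E}\left[\left\| \tfrac1t\int_0^t F(\tilde w_s)\,ds - \bar F\right\|_H^2\right] = \frac{2}{t^2}\int_0^t\int_r^t \mathbb{E}[\langle F(\tilde w_s) - \bar F, F(\tilde w_r) - \bar F\rangle_H]\,ds\,dr,
\]
with $\bar F := \int F\,d\mu$. Third, for $r \le s$ I would use the Markov property to write the inner expectation as $\mathbb{E}[\langle F(\tilde w_r) - \bar F, (P_{s-r}(F(\cdot) - \bar F))(\tilde w_r)\rangle_H]$, apply Cauchy--Schwarz, bound the first factor using the boundedness of $a$ and the quartic moment bound \eqref{est:tilew2}, and bound the second factor by the exponential-mixing estimate from step one applied conditionally, giving a factor $e^{-\iota(s-r)}$. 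Fourth, integrating $\int_0^t\int_r^t e^{-\iota(s-r)}\,ds\,dr \lesssim t$ yields the claimed $O(1/t)$ bound.

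I do not anticipate any serious obstacle: the proof is a routine transcription. The only mild points of care are (i) checking that the pointwise bound on $F(w_1) - F(w_2)$ genuinely only costs one power of the $H^{\sigma}$-norm of the difference (which it does, since $a$ is assumed bounded $H^\sigma \times H^\sigma \to H$ and bilinear, so the polarization into a symmetric argument and a difference argument works), and (ii) noting that although $F$ here is $H$-valued rather than $H^\sigma \otimes H^\sigma$-valued, one can still set up the component-wise action of the Markov semigroup $P_t$ on $H$-valued functions by expanding in an orthonormal basis of $H$ exactly as in the proof of Proposition~\ref{p:erg1}. Neither of these requires any new idea, which is why the corollary is stated as a direct consequence.
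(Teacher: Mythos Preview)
Your proposal is correct and matches the paper's approach exactly: the paper does not give a separate proof of the corollary but simply states that ``the same proof applies to other quadratic functions $F$,'' which is precisely the transcription you outline.
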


\section{Convergence of the rough driver}\label{s:2}

The goal of this section is to prove Theorem~\ref{l:11}. 

\begin{proof}
  We intend to apply the Kolmogorov criterion for rough path distance,
  Theorem~3.3 in {\cite{FH14}}, to deduce that for all $q \in [2, \infty)$,
  $\theta \in (2 / 3, 1]$ and $\beta \in (1 / 3, \theta / 2)$
  \begin{equation}
    (\mathbb{E} [\| Y^{\varepsilon, 1} - B^1 \|_{C^{\beta}_2 ([0, T] ;
    H^{\sigma})}^q])^{1 / q} + (\mathbb{E} [\| Y^{\varepsilon, 2} - B^2
    \|_{C^{2 \beta}_2 ([0, T] ; H^{\sigma} \otimes H^{\sigma})}^q])^{1 / q}
    \lesssim \varepsilon^{(1 - \theta) / 2} . \label{eq:bound}
  \end{equation}
  This then readily implies the first statement of the theorem. First, we
  observe that the processes $w^{\varepsilon}, y^{\varepsilon}, B$ are jointly
  Gaussian as being linear combinations of $W$. Accordingly, $Y^{\varepsilon,
  1} - B^1$ and $Y^{\varepsilon, 2} - B^2$, respectively, live in the Wiener
  chaos of order 1 and 2, respectively. Hence all the $L^q (\Omega)$-norms are
  equivalent and by Theorem~3.3 in {\cite{FH14}} it is enough to prove for \rmb{all}
  $\theta \in [0, 1]$ 
  \begin{equation}
    \mathbb{E} [\| Y_{s t}^{\varepsilon, 1} - B_{s t}^1 \|^2_{H^{\sigma}}]
    \lesssim \varepsilon^{1 - \theta} | t - s |^{\theta}, \qquad \mathbb{E}
    [\| Y^{\varepsilon, 2}_{s t} - B^2_{s t} \|^2_{H^{\sigma} \otimes
    H^{\sigma}}] \lesssim \varepsilon^{1 - \theta} | t - s |^{2 \theta} .
    \label{eq:yy7}
  \end{equation}

  Step 1: Regarding the first bound in \eqref{eq:yy7}, we note that according
  to \eqref{eq:89} and \eqref{eq:tr} $B$ is an $H^{\sigma}$-valued Wiener
  process and in view of \eqref{eq:w} and \eqref{eq:y} it holds for $t \in [0,
  T]$
  \begin{equation}
    y^{\varepsilon}_t = B_t - \varepsilon^{1 / 2} (- C)^{- 1}
    w^{\varepsilon}_t . \label{eq:oo}
  \end{equation}
  Consequently, by \eqref{eq:89} and \eqref{est:tilew2} it holds for all $t
  \in [0, T]$ (using again the notation $\tilde{w}_t = w_{\varepsilon
  t}^{\varepsilon}$)
  \begin{equation}
    \mathbb{E} [\| y^{\varepsilon}_t - B_t \|_{H^{\sigma}}^2] = \varepsilon
    \mathbb{E} [\| (- C)^{- 1} w^{\varepsilon}_t \|_{H^{\sigma}}^2] =
    \varepsilon \mathbb{E} [\| (- C)^{- 1} \tilde{w}_{\varepsilon^{- 1} t}
    \|_{H^{\sigma}}^2] \lesssim \varepsilon . \label{eq:888}
  \end{equation}
  \rmb{This proves the first bound in \eqref{eq:yy7} for $\theta=0$.}
  On the other hand, \rmb{aiming for the first bound in \eqref{eq:yy7} with $\theta=1$,} we write
  \begin{equation}
    \mathbb{E} [\| \delta y_{s t}^{\varepsilon} - \delta B_{s t}
    \|_{H^{\sigma}}^2] = \varepsilon \mathbb{E} [\| (- C)^{- 1} \delta w_{s
    t}^{\varepsilon} \|_{H^{\sigma}}^2] = \varepsilon \mathbb{E} [\| (-
    C)^{- 1} \delta \tilde{w}_{\varepsilon^{- 1} s, \varepsilon^{- 1} t}
    \|^2_{H^{\sigma}}]  \label{eq:dd}
  \end{equation}
  and observe that
  \[ \delta \tilde{w}_{s t} = (e^{C t} - e^{C s}) \tilde{w}_0 + (e^{C (t - s)}
     - \tmop{Id}) \int_0^s e^{C (s - r)} Q^{1 / 2} d \tilde{W}_r + \int_s^t
     e^{C (t - r)} Q^{1 / 2} d \tilde{W}_r \]
  \[ = (e^{C (t - s)} - \tmop{Id}) \tilde{w}_s + \int_s^t e^{C (t - r)} Q^{1 /
     2} d \tilde{W}_r, \]
  which implies by \eqref{eq:89}
  \[ \mathbb{E} [\| (- C)^{- 1} \delta \tilde{w}_{s t} \|_{H^{\sigma}}^2]
     \lesssim \mathbb{E} [\| (- C)^{- 1} (e^{C (t - s)} - \tmop{Id})
     \tilde{w}_s \|_{H^{\sigma}}^2] +\mathbb{E} \left[ \left\| \int_s^t e^{C
     (t - r)} Q^{1 / 2} d \tilde{W}_r \right\|_{H^{\sigma}}^2 \right] . \]
  Here the second term is bounded using \eqref{eq:tr} as
  \[ \mathbb{E} \left[ \left\| \int_s^t e^{C (t - r)} Q^{1 / 2} d \tilde{W}_r
     \right\|_{H^{\sigma}}^2 \right] =\mathbb{E} \int_s^t \| (-
     \Delta)^{\sigma / 2} e^{C (t - r)} Q^{1 / 2} \|_{L_2 (H)}^2 d r \lesssim
     | t - s | . \]
  For the first one, we use \eqref{eq:77} and \eqref{est:tilew2} to get
  \[ \mathbb{E} [\| (- C)^{- 1} (e^{C (t - s)} - \tmop{Id}) \tilde{w}_s \|_{H^{\sigma}}^2] \lesssim | t - s | \mathbb{E} \left[ \left\|  \tilde{w}_s \right\|^2_{H^{\sigma}}  \right] \lesssim | t - s | . \]
  Plugging this into \eqref{eq:dd} yields
  \begin{equation}
    \mathbb{E} [\| \delta y_{s t}^{\varepsilon} - \delta B_{s t}
    \|_{H^{\sigma}}^2] \lesssim | t - s | . \label{eq:ddd}
  \end{equation}
  \rmb{In other words, the first bound in \eqref{eq:yy7} holds with $\theta=1$.
Finally, let $\theta\in[0,1]$ be arbitrary. Then by interpolation,} \eqref{eq:888} and \eqref{eq:ddd} imply
  \[ \mathbb{E} [\| Y_{s t}^{\varepsilon, 1} - B_{s t}^1 \|^2_{H^{\sigma}}]
     \leqslant (\mathbb{E} [\| \delta y_{s t}^{\varepsilon} - \delta B_{s t}
     \|_{H^{\sigma}}^2])^{1 - \theta} {{(\mathbb{E} [\| \delta y_{s
     t}^{\varepsilon} - \delta B_{s t} \|_{H^{\sigma}}^2])^{\theta}} } 
     \lesssim \varepsilon^{1 - \theta} | t - s |^{\theta} \]
  and the first bound in \eqref{eq:yy7} is verified.
  
  As in Theorem~3.3 in {\cite{FH14}}, this readily implies
  \[ (\mathbb{E} [\| Y^{\varepsilon, 1} - B^1 \|_{C^{\beta}_2 ([0, T] ;
     H^{\sigma})}^q])^{1 / q} \lesssim \varepsilon^{(1 - \theta) / 2} \]
  and in particular also proves the convergence of $Y^{\varepsilon, 1}
  \rightarrow B^1$ in $L^q (\Omega ; C^{\beta}_2 ([0, T] ; H^{\sigma}))$.
  
  Step 2: To establish the second bound in \eqref{eq:yy7} we proceed similarly
  as in Step 1. Towards the interpolation argument, we first show the bound
  for $\theta = 1$, afterwards for $\theta = 0$.
  
  Step 2.1: For $\theta = 1$, we recall that $B^2$ is a rough path lift of $B$
  defined in \eqref{eq:B2ito}. Here the martingale part is given by It{\^o}'s
  iterated integral, which is well-defined in $H^{\sigma} \otimes H^{\sigma}$
  due to \eqref{eq:89} and \eqref{eq:tr}, the finite variation part is
  controlled due to the properties of the invariant measure $\mu$. Therefore
  \[ \mathbb{E} [\| B^2_{s t} \|^2_{H^{\sigma} \otimes H^{\sigma}}] \lesssim |
     t - s |^2 . \]
  Hence, we shall prove
  \begin{equation}
    \mathbb{E} [\| Y^{\varepsilon, 2}_{s t} \|^2_{H^{\sigma} \otimes
    H^{\sigma}}] \lesssim | t - s |^2 . \label{eq:mmm23}
  \end{equation}
  To this end, we apply \eqref{eq:oo} to get
  \[ Y^{\varepsilon, 2}_{s t} = \int_s^t \delta y^{\varepsilon}_{s r} \otimes
     d y^{\varepsilon}_r = \int_s^t \delta y^{\varepsilon}_{s r} \otimes d B_r
     - \varepsilon^{1 / 2} \int_s^t \delta y^{\varepsilon}_{s r} \otimes (-
     C)^{- 1} d w_r^{\varepsilon} . \]
  Integrating by parts in the second term yields
  \[ \varepsilon^{1 / 2} \int_s^t \delta y^{\varepsilon}_{s r} \otimes (-
     C)^{- 1} d w_r^{\varepsilon} = \varepsilon^{1 / 2} \int_s^t
     y^{\varepsilon}_r \otimes (- C)^{- 1} d w_r^{\varepsilon} -
     \varepsilon^{1 / 2} y^{\varepsilon}_s \otimes (- C)^{- 1} \delta
     w^{\varepsilon}_{s t} \]
  \[ = - \varepsilon^{1 / 2} \int_s^t d y^{\varepsilon}_r \otimes (- C)^{- 1}
     w^{\varepsilon}_r + \varepsilon^{1 / 2} [y^{\varepsilon}_r \otimes (-
     C)^{- 1} w^{\varepsilon}_r]_{r = s}^{r = t} - \varepsilon^{1 / 2}
     y^{\varepsilon}_s \otimes (- C)^{- 1} \delta w^{\varepsilon}_{s t} \]
  \[ = - \varepsilon^{1 / 2} \int_s^t d y^{\varepsilon}_r \otimes (- C)^{- 1}
     w^{\varepsilon}_r + \varepsilon^{1 / 2} y^{\varepsilon}_t \otimes (-
     C)^{- 1} w^{\varepsilon}_t - \varepsilon^{1 / 2} y^{\varepsilon}_s
     \otimes (- C)^{- 1} w^{\varepsilon}_t \]
  \[ = - \varepsilon^{1 / 2} \int_s^t d y^{\varepsilon}_r \otimes (- C)^{- 1}
     w^{\varepsilon}_r + \varepsilon^{1 / 2} \delta y^{\varepsilon}_{s t}
     \otimes (- C)^{- 1} w^{\varepsilon}_t \]
  \[ = - \int_s^t w^{\varepsilon}_r \otimes (- C)^{- 1} w^{\varepsilon}_r d r
     + \int_s^t w^{\varepsilon}_r d r \otimes (- C)^{- 1} w^{\varepsilon}_t .
  \]
  We estimate each term separately. The first term is controlled by It{\^o}'s
  isometry as
  \[ \mathbb{E} \left[ \left\| \int_s^t \delta y^{\varepsilon}_{s r} \otimes d
     B_r \right\|^2_{H^{\sigma} \otimes H^{\sigma}} \right] =\mathbb{E} \left[
     \left\| \int_s^t \delta y^{\varepsilon}_{s r} \otimes (- C)^{- 1} Q^{1 /
     2} d W_r \right\|^2_{H^{\sigma} \otimes H^{\sigma}} \right] \]
  \[ =\mathbb{E} \left[ \int_s^t \| \delta y^{\varepsilon}_{s r} \otimes (-
     C)^{- 1} Q^{1 / 2} \cdummy \|_{L_2 (H, H^{\sigma} \otimes H^{\sigma})}^2
     d r \right], \]
  where
  \[ \| \delta y^{\varepsilon}_{s r} \otimes (- C)^{- 1} Q^{1 / 2} \cdummy
     \|_{L_2 (H, H^{\sigma} \otimes H^{\sigma})}^2 = \sum_{k \in \mathbb{N}}
     \| \delta y^{\varepsilon}_{s r} \otimes (- C)^{- 1} Q^{1 / 2} g_k
     \|_{H^{\sigma} \otimes H^{\sigma}}^2 \]
  \[ = \| \delta y^{\varepsilon}_{s r} \|_{H^{\sigma}}^2 \sum_{k \in
     \mathbb{N}} \| (- C)^{- 1} Q^{1 / 2} e_k \|_{H^{\sigma}}^2 = \| \delta
     y^{\varepsilon}_{s r} \|_{H^{\sigma}}^2 \| (- C)^{- 1} Q^{1 / 2}
     \|^2_{L_2 (H ; H^{\sigma})} . \]
  Hence by \eqref{eq:89} and \eqref{eq:tr} we deduce
  \[ \mathbb{E} \left[ \left\| \int_s^t \delta y^{\varepsilon}_{s r} \otimes d
     B_r \right\|^2_{H^{\sigma} \otimes H^{\sigma}} \right] \lesssim
     \mathbb{E} \left[ \int_s^t \| \delta y^{\varepsilon}_{s r}
     \|_{H^{\sigma}}^2 d r \right] \lesssim \int_s^t (r - s) d r \lesssim | t
     - s |^2, \]
  where we used the estimate for the first component proved in Step~1 in the
  second to last inequality.
  
  For the second term, we use \eqref{eq:89} together with \eqref{est:tilew2}
  to obtain
  \[ \mathbb{E} \left[ \left\| \int_s^t w^{\varepsilon}_r \otimes (- C)^{- 1}
     w^{\varepsilon}_r d r \right\|^2_{H^{\sigma} \otimes H^{\sigma}} \right]
     \leqslant | t - s | \int_s^t \mathbb{E} [\| w^{\varepsilon}_r
     \|^4_{H^{\sigma}}] d r \lesssim | t - s |^2 . \]
  Finally, by the same assumptions we bound the last term as
  \[ \mathbb{E} \left[ \left\| \int_s^t w^{\varepsilon}_r d r \otimes (- C)^{-
     1} w^{\varepsilon}_t \right\|^2_{H^{\sigma} \otimes H^{\sigma}} \right]
     \lesssim | t - s | \int_s^t \mathbb{E} [\| w^{\varepsilon}_r
     \|^2_{H^{\sigma}} \| w^{\varepsilon}_t \|^2_{H^{\sigma}}] d r \lesssim |
     t - s |^2 . \]
  This completes the proof of \eqref{eq:mmm23} and consequently the second
  bound \eqref{eq:yy7} with $\theta = 1$ follows.
  
  Step 2.2: We aim at proving the second bound in \eqref{eq:yy7} with $\theta
  = 0$. Proceeding as in the analogous part of Step 1, we want a rate of
  convergence of the form
  \begin{equation}
    \sup_{s, t \in [0, T]} \mathbb{E} [\| Y_{s t}^{\varepsilon, 2} - B_{s t}^2
    \|^2_{H^{\sigma} \otimes H^{\sigma}}]  \lesssim \varepsilon .
    \label{eq:mmm2}
  \end{equation}
  To this end, it is enough to prove for all $t \in [0, T]$
  \begin{equation}
    \mathbb{E} [\| Y_{0 t}^{\varepsilon, 2} - B_{0 t}^2 \|^2_{H^{\sigma}
    \otimes H^{\sigma}}]  \lesssim \varepsilon . \label{eq:0t}
  \end{equation}
  Indeed, by Chen's relation for $0 \leqslant r \leqslant t$
  \[ \delta (Y^{\varepsilon, 2} - B^2)_{0 r t} = (Y_{0 t}^{\varepsilon, 2} -
     B_{0 t}^2) - (Y_{0 r}^{\varepsilon, 2} - B_{0 r}^2) - (Y_{r
     t}^{\varepsilon, 2} - B_{r t}^2) = Y^{\varepsilon, 1}_{0 r} \otimes
     Y^{\varepsilon, 1}_{r t} - B^1_{0 r} \otimes B^1_{r t} \]
  \[ = (Y^{\varepsilon, 1}_{0 r} - B^1_{0 r}) \otimes Y^{\varepsilon, 1}_{r t}
     + B^1_{0 r} \otimes (Y^{\varepsilon, 1}_{r t} - B^1_{r t}), \]
  meaning, if we already know the bound \eqref{eq:0t} for all $t$ we may
  combine it with \eqref{eq:888} to get \eqref{eq:mmm2}.
  
  Let us fix $t \in [0, T]$ and prove \eqref{eq:0t}. We recall that by
  \eqref{eq:oo} it holds
  \[ Y_{0 t}^{\varepsilon, 2} = \int_0^t y^{\varepsilon}_s \otimes d
     y_s^{\varepsilon} = \int_0^t y^{\varepsilon}_s \otimes d B_s -
     \varepsilon^{1 / 2} \int_0^t y^{\varepsilon}_s \otimes (- C)^{- 1} d
     w_s^{\varepsilon} . \]
  Integrating by parts yields
  \[ Y_{0 t}^{\varepsilon, 2} = \int_0^t y^{\varepsilon}_s \otimes d B_s -
     \varepsilon^{1 / 2} y^{\varepsilon}_t \otimes (- C)^{- 1}
     w^{\varepsilon}_t + \varepsilon^{1 / 2} \int_0^t d y^{\varepsilon}_s
     \otimes (- C)^{- 1} w_s^{\varepsilon} \]
  \begin{equation}
    = \int_0^t y^{\varepsilon}_s \otimes d B_s - \varepsilon^{1 / 2}
    y^{\varepsilon}_t \otimes (- C)^{- 1} w^{\varepsilon}_t + \int_0^t
    w^{\varepsilon}_s \otimes (- C)^{- 1} w_s^{\varepsilon} d s. \label{eq:15}
  \end{equation}
  In order to treat the last term, we apply the ergodic theorem from
  Proposition~\ref{p:erg1}. It shall converge to the finite variation term in
  the It{\^o} expression of $B^2$, i.e. \eqref{eq:B2ito}. Namely, we need to
  bound
  \[ \mathbb{E} \left[ \left\| \int_0^t w^{\varepsilon}_s \otimes (- C)^{- 1}
     w_s^{\varepsilon} d s - t \int w \otimes (- C)^{- 1} w d \mu (w)
     \right\|^2_{H^{\sigma} \otimes H^{\sigma}} \right], \]
  which by the change of variables $r = \varepsilon^{- 1} s$, $d r =
  \varepsilon^{- 1} d s$, $w^{\varepsilon}_s = \tilde{w}_{\varepsilon^{- 1}
  s}$ and Proposition~\ref{p:erg1} rewrites as
  \[ =\mathbb{E} \left[ \left\| \varepsilon \int_0^{\varepsilon^{- 1} t}
     \tilde{w}_r \otimes (- C)^{- 1} \tilde{w}_r d r - t \int w \otimes (-
     C)^{- 1} w d \mu (w) \right\|^2_{H^{\sigma} \otimes H^{\sigma}} \right]
  \]
  \[ = t\mathbb{E} \left[ \left\| \frac{1}{\varepsilon^{- 1} t}
     \int_0^{\varepsilon^{- 1} t} \tilde{w}_r \otimes (- C)^{- 1} \tilde{w}_r
     d r - \int w \otimes (- C)^{- 1} w d \mu (w) \right\|^2_{H^{\sigma}
     \otimes H^{\sigma}} \right] \lesssim \varepsilon . \]

  For the convergence of the stochastic integral in \eqref{eq:15}, we have by
  It{\^o}'s isometry, \eqref{eq:tr} and \eqref{eq:888}
  \[ \mathbb{E} \left[ \left\| \int_0^t y^{\varepsilon}_s \otimes d B_s -
     \int_0^t B_s \otimes d B_s \right\|^2_{H^{\sigma} \otimes H^{\sigma}}
     \right] \lesssim \mathbb{E} \left[ \int_0^t \| y^{\varepsilon}_s - B_s
     \|_{H^{\sigma}}^2 d s \right] \lesssim \varepsilon . \]
  The remaining term in \eqref{eq:15} is controlled as follows
  \[ \mathbb{E} [\| \varepsilon^{1 / 2} y^{\varepsilon}_t \otimes (- C)^{- 1}
     w^{\varepsilon}_t \|^2_{H^{\sigma} \otimes H^{\sigma}}] = \varepsilon 
     \mathbb{E} [\| y^{\varepsilon}_t \|^2_{H^{\sigma}} \| (- C)^{- 1}
     w^{\varepsilon}_t \|^2_{H^{\sigma}}] \]
  \[ \leqslant \varepsilon (\mathbb{E} [\| y^{\varepsilon}_t
     \|_{H^{\sigma}}^4])^{1 / 2} (\mathbb{E} [\| w^{\varepsilon}_t
     \|^4_{H^{\sigma}}])^{1 / 2} \lesssim \varepsilon, \]
  where we used Gaussianity and \eqref{eq:888} to get
  \[ \mathbb{E} [\| y^{\varepsilon}_t \|_{H^{\sigma}}^4] \lesssim \mathbb{E}
     [\| y^{\varepsilon}_t - B_t \|_{H^{\sigma}}^4] +\mathbb{E} [\| B_t
     \|_{H^{\sigma}}^4] \lesssim 1. \]

  This completes the proof of \eqref{eq:0t} where $B^2$ is given by the
  It{\^o} formulation \eqref{eq:B2ito}. It also completes \ the proof of the
  second bound in \eqref{eq:yy7} for $\theta = 0$. Interpolating with the
  bound for $\theta = 1$, the general version of \eqref{eq:yy7} follows.
  
  Step 2.3: Now, it remains to verify the Stratonovich formulation
  \eqref{eq:B2str}. Rewriting It{\^o}'s stochastic integral into
  Stratonovich's form we obtain
  \begin{equation}
    B^2_{0 t} = \int_0^t B_s \otimes \circ d B_s - \frac{t}{2} \sum_{m \in
    \mathbb{N}} (- C)^{- 1} Q^{1 / 2} e_m \otimes (- C)^{- 1} Q^{1 / 2} e_m +
    t \int w \otimes (- C)^{- 1} w d \mu (w) . \label{eq:11}
  \end{equation}
  We intend to prove that the symmetric part of the very last term precisely
  cancels with the It{\^o}--Stratonovich corrector, i.e. the second term in
  \eqref{eq:11}. To this end, we first observe that the It{\^o}--Stratonovich
  corrector projected at $e_k \otimes e_{\ell}$ reads as
  \[ - \frac{t}{2} \left\langle \sum_{m \in \mathbb{N}} (- C)^{- 1} Q^{1 / 2}
     e_m \otimes (- C)^{- 1} Q^{1 / 2} e_m, e_k \otimes e_{\ell} \right\rangle
  \]
  \[ = - \frac{t}{2} \sum_{m \in \mathbb{N}} \langle (- C)^{- 1} Q^{1 / 2}
     e_m, e_k \rangle \langle (- C)^{- 1} Q^{1 / 2} e_m, e_{\ell} \rangle \]
  \[ = - \frac{t}{2} \langle Q^{1 / 2} ((- C)^{- 1})^{\ast} e_k, Q^{1 / 2} ((-
     C)^{- 1})^{\ast} e_{\ell} \rangle = - \frac{t}{2} \langle (- C)^{- 1} Q
     ((- C)^{- 1})^{\ast} e_k, e_{\ell} \rangle . \]
  We perform the same projection to the last term in \eqref{eq:11}. Since $\mu
  =\mathcal{N} (0, Q_{\infty})$ with $Q_{\infty}$ given in \eqref{eq:mu} and
  since it holds for any $g, h \in H$
  \[ \langle Q_{\infty} g, h \rangle = \int \langle g, w \rangle \langle h, w
     \rangle d \mu (w), \]
  we obtain
  \[ t \left\langle \int w \otimes (- C)^{- 1} w d \mu (w), e_k \otimes
     e_{\ell} \right\rangle = t \int \langle w, e_k \rangle \langle w, ((-
     C)^{- 1})^{\ast} e_{\ell} \rangle d \mu (w) \]
  \[ = t \langle Q_{\infty} e_k, ((- C)^{- 1})^{\ast} e_{\ell} \rangle = t
     \langle (- C)^{- 1} Q_{\infty} e_k, e_{\ell} \rangle . \]
  By integration by parts we obtain
  \[ (- C)^{- 1} Q_{\infty} = \int_0^{\infty} (- C)^{- 1} e^{C t} Q
     e^{C^{\ast} t} d t \]
  \[ = [(- C)^{- 1} e^{C t} Q e^{C^{\ast} t} ((- C)^{- 1})^{\ast}]_{t = 0}^{t
     = \infty} - \int_0^{\infty} e^{C t} Q e^{C^{\ast} t} ((- C)^{- 1})^{\ast}
     d t \]
  \[ = (- C)^{- 1} Q ((- C)^{- 1})^{\ast} - Q_{\infty} ((- C)^{- 1})^{\ast} \]
  hence the symmetric part reads as
  \[ \tmop{Sym} ((- C)^{- 1} Q_{\infty}) = \frac{1}{2} [(- C)^{- 1} Q_{\infty}
     + Q_{\infty} ((- C)^{- 1})^{\ast}] = \frac{1}{2} (- C)^{- 1} Q ((- C)^{-
     1})^{\ast}, \]
  which cancels the It{\^o}--Stratonovich corrector, as claimed. As a
  consequence, we deduce that
  \[ B^2_{0 t} = \int_0^t B_s \otimes \circ d B_s + t M, \]
  where $M$ is antisymmetric and given by
  \[ \langle M, e_k \otimes e_{\ell} \rangle = \langle \tmop{Ant} ((- C)^{- 1}
     Q_{\infty}) e_k, e_{\ell} \rangle = \left\langle \frac{1}{2} [(- C)^{- 1}
     Q_{\infty} - Q_{\infty} ((- C)^{- 1})^{\ast}] e_k, e_{\ell} \right\rangle
  \]
  or alternatively
  \[ \langle M, e_k \otimes e_{\ell} \rangle = \langle (- C)^{- 1} Q_{\infty}
     e_k, e_{\ell} \rangle - \left\langle \frac{1}{2} (- C)^{- 1} Q ((- C)^{-
     1})^{\ast} e_k, e_{\ell} \right\rangle . \]
  By using Chen's relation again to derive a formula for $B^2_{s t}$ for
  general $s, t \in [0, T]$, \eqref{eq:B2str} follows.

  Step 3: Finally, we shall prove the H\"older continuity of the mapping $\varepsilon\mapsto (Y^{\varepsilon,1}, Y^{\varepsilon,2})$ and the pathwise uniform bound  \eqref{eq:new5}.
  
    Step 3.1: First component of the rough path. For $0 < \varepsilon < \eta
\leqslant 1$ we have
  \[ w^{\varepsilon} (t) - w^{\eta} (t) = (\varepsilon^{- 1 / 2} - \eta^{- 1 /
     2}) \int_0^t e^{\varepsilon^{- 1} C (t - s)} Q^{1 / 2} d W_s \]
  \[ + \eta^{- 1 / 2} \int_0^t (e^{\varepsilon^{- 1} C (t - s)} - e^{\eta^{-
     1} C (t - s)}) Q^{1 / 2} d W_s \backassign I_1 + I_2 . \]
  Using \eqref{eq:6} and \eqref{eq:tr} we obtain
  \[ \mathbb{E} [\| I_1 \|_{H^{\sigma}}^2] = (\varepsilon^{- 1 / 2} - \eta^{-
     1 / 2})^2 \int_0^t \| e^{\varepsilon^{- 1} C (t - s)} Q^{1 / 2} \|_{L_2
     (H ; H^{\sigma})}^2 d s \]
  \[ \lesssim \varepsilon^{- 3} | \varepsilon - \eta |^2 \int_0^t e^{- 2
     \varepsilon^{- 1} \iota (t - s)} d s \lesssim \varepsilon^{- 2} |
     \varepsilon - \eta |^2, \]
  and by \eqref{eq:new2}, \eqref{eq:6} and \eqref{eq:new}  
  \[ \mathbb{E} [\| I_2 \|_{H^{\sigma}}^2] \lesssim \eta^{- 1} \int_0^t \|
     (e^{\varepsilon^{- 1} C (t - s)} - e^{\eta^{- 1} C (t - s)}) Q^{1 / 2}
     \|_{L_2 (H ; H^{\sigma})}^2 d s \]
  \[ \leqslant \eta^{- 1} \int_0^t \| (- C)^{- \vartheta} (e^{(\varepsilon^{- 1}
     - \eta^{- 1}) C (t - s)} - \tmop{Id}) \|_{\mathcal{L} (H^{\sigma} ;
     H^{\sigma})}^{{2}} \| e^{\eta^{- 1} C (t - s)}
     \|_{\mathcal{L} (H^{\sigma} ; H^{\sigma})}^{{2}} \| (-
     C)^{\vartheta} Q^{1 / 2} \|_{L_2 (H ; H^{\sigma})}^{{2}} d s \]
  \[ \lesssim \eta^{- 1} | \varepsilon^{- 1} - \eta^{- 1}
     |^{{\vartheta}} \int_0^t | t - s |^{{\vartheta}} e^{-
{2 \iota \eta^{- 1}} (t - s)} d s \lesssim
     \varepsilon^{{- 2 \vartheta}} | \varepsilon - \eta
     |^{{\vartheta}} . \]
  We deduce
  \begin{equation}
    \mathbb{E} [\| w^{\varepsilon} (t) - w^{\eta} (t) \|_{H^{\sigma}}^2]
    \lesssim \varepsilon^{- 2} | \varepsilon - \eta |^{{\vartheta}}
    . \label{eq:new3}
  \end{equation}
  We proceed with an estimate for $Y^{\varepsilon, 1} - Y^{\eta, 1}$. It holds
  \[ Y^{\varepsilon, 1}_{s t} - Y^{\eta, 1}_{s t} = (\varepsilon^{- 1 / 2} -
     \eta^{- 1 / 2}) \int_s^t w^{\varepsilon}_r d r + \eta^{- 1 / 2} \int_s^t
     (w^{\varepsilon}_r - w^{\eta}_r) d r \]
  hence in view of \eqref{eq:new3} 
  \[ \mathbb{E} [\| Y^{\varepsilon, 1}_{s t} - Y^{\eta, 1}_{s t}
     \|_{H^{\sigma}}^2] \lesssim (\varepsilon^{- 1 / 2} - \eta^{- 1 / 2})^2
     \mathbb{E} \left[ \left( \int_s^t \| w^{\varepsilon}_r \|_{H^{\sigma}} d
     r \right)^2 \right] + \eta^{- 1} \mathbb{E} \left[ \left( \int_s^t \|
     w^{\varepsilon}_r - w^{\eta}_r \|_{H^{\sigma}} d r \right)^2 \right] \]
  \[ \lesssim \varepsilon^{- 3} | \varepsilon - \eta |^2 | t - s |^{{2}} + \eta^{-
     1} \varepsilon^{- 2} | \varepsilon - \eta |^{{\vartheta}} | t -
     s |^{{2}} \]
  \[ \lesssim \varepsilon^{- 3} | \varepsilon - \eta |^{{\vartheta}}
     | t - s |^{{2}} . \]
  By Gaussianity and Kolmogorov's criterion, this leads to
  \begin{equation}
    (\mathbb{E} [\| Y^{\varepsilon, 1} - Y^{\eta, 1} \|_{C^{\beta}_2 ([0, T] ;
    H^{\sigma})}^q])^{1 / q} \lesssim \varepsilon^{{- 3 / 2}} |
    \varepsilon - \eta |^{{\vartheta / 2}} \label{eq:new4}
  \end{equation}
  with $\beta$ as in \eqref{eq:bound}.
  Define $Y^{0, 1} \assign B^1$ so that the first bound in \eqref{eq:bound}
  reads as
  \[ (\mathbb{E} [\| Y^{\varepsilon, 1} - Y^{0, 1} \|_{C^{\beta}_2 ([0, T] ;
     H^{\sigma})}^q])^{1 / q} \lesssim \varepsilon^{(1 - \theta) / 2}, \]
  where $\theta \in (2 / 3, 1]$. This implies the folowing.
  
  First case: if $2 \varepsilon < \eta$ then $\eta < 2 | \varepsilon - \eta |$
  and therefore
  \[ (\mathbb{E} [\| Y^{\varepsilon, 1} - Y^{\eta, 1} \|_{C^{\beta}_2 ([0, T]
     ; H^{\sigma})}^q])^{1 / q} \leqslant (\mathbb{E} [\| Y^{\varepsilon, 1} -
     Y^{0, 1} \|_{C^{\beta}_2 ([0, T] ; H^{\sigma})}^q])^{1 / q} \]
  \[ + (\mathbb{E} [\| Y^{\eta, 1} - Y^{0, 1} \|_{C^{\beta}_2 ([0, T] ;
     H^{\sigma})}^q])^{1 / q} \lesssim \eta^{(1 - \theta) / 2} \lesssim |
     \varepsilon - \eta |^{(1 - \theta) / 2} . \]
 
  Second case: if $\varepsilon < \eta < 2 \varepsilon$ and $\varepsilon
  \leqslant | \varepsilon - \eta |^{\gamma_0}$ for some $\gamma_0$ to be
  chosen, then
  \[ (\mathbb{E} [\| Y^{\varepsilon, 1} - Y^{\eta, 1} \|_{C^{\beta}_2 ([0, T]
     ; H^{\sigma})}^q])^{1 / q} \lesssim \varepsilon^{(1 - \theta) / 2}
     \lesssim | \varepsilon - \eta |^{\gamma_0 (1 - \theta) / 2} . \]
 
  Third case: if $\varepsilon < \eta < 2 \varepsilon$ and $\varepsilon > |
  \varepsilon - \eta |^{\gamma_0}$ then by \eqref{eq:new4}
  \[ (\mathbb{E} [\| Y^{\varepsilon, 1} - Y^{\eta, 1} \|_{C^{\beta}_2 ([0, T]
     ; H^{\sigma})}^q])^{1 / q} \lesssim \varepsilon^{{- 3 / 2}}
     | \varepsilon - \eta |^{{\vartheta / 2}} \lesssim | \varepsilon
     - \eta |^{{\vartheta / 2 - 3 \gamma_0 / 2}} \lesssim |
     \varepsilon - \eta |^{{\vartheta / 4}} \]
  provided ${\gamma_0 = \vartheta / 6}$.
  
  It follows that there exists $\kappa_1 > 0$ so that for every $\varepsilon,
  \eta \in [0, 1]$
  \[ (\mathbb{E} [\| Y^{\varepsilon, 1} - Y^{\eta, 1} \|_{C^{\beta}_2 ([0, T]
     ; H^{\sigma})}^q])^{1 / q} \lesssim {| \varepsilon - \eta
     |^{\kappa_1}}. \]
  By Kolmogorov's criterion we finally deduce the H{\"o}lder continuity of the mapping  
  \[ [0, 1] \rightarrow C^{\beta}_2 ([0, T] ; H^{\sigma}), \qquad \varepsilon
     \mapsto Y^{\varepsilon, 1} \]
     as well as the first bound in
  \eqref{eq:new5}.
  
  Step 3.2: Second component of the rough path. We proceed similarly as before.
  We have
  \[ Y^{\varepsilon, 2}_{s t} = \varepsilon^{- 1} \int_s^t \int_s^r
     w^{\varepsilon}_{\theta} \otimes w^{\varepsilon}_r d \theta d r \]
  and for $0 < \varepsilon < \eta \leqslant 1$
  \[ Y^{\varepsilon, 2}_{s t} - Y^{\eta, 2}_{s t} = (\varepsilon^{- 1} -
     \eta^{- 1}) \int_s^t \int_s^r w^{\varepsilon}_{\theta} \otimes
     w^{\varepsilon}_r d \theta d r \]
  \[ + \eta^{- 1} \int_s^t \int_s^r (w^{\varepsilon}_{\theta} -
     w^{\eta}_{\theta}) \otimes w^{\varepsilon}_r d \theta d r + \eta^{- 1}
     \int_s^t \int_s^r w^{\eta}_{\theta} \otimes (w^{\varepsilon}_r -
     w^{\eta}_r) d \theta d r \backassign I_1 + I_2 + I_3 . \]
  We obtain 
  \[ \mathbb{E} [\| I_1 \|_{H^{\sigma} \otimes H^{\sigma}}^2] \leqslant |
     \varepsilon^{- 1} - \eta^{- 1} |^2 \mathbb{E} \left[ \left( \int_s^t
     \int_s^r \| w^{\varepsilon}_{\theta} \otimes w^{\varepsilon}_r
     \|_{H^{\sigma} \otimes H^{\sigma}} d \theta d r \right)^2 \right]
     \lesssim \varepsilon^{- 4} | \varepsilon - \eta |^2 | t - s |^{{4}} . \]
  Then
  \[ \mathbb{E} [\| I_2 \|_{H^{\sigma} \otimes H^{\sigma}}^2] \lesssim \eta^{-
     1} \mathbb{E} \left[ \left( \int_s^t \| w^{\varepsilon}_{\theta} -
     w^{\eta}_{\theta} \|_{H^{\sigma}} d \theta \int_s^t \| w^{\varepsilon}_r
     \|_{H^{\sigma}} d r \right)^2 \right] ,\]
by Minkowski's integral
  inequality
  \[ \lesssim \eta^{- 1} \left( \int_s^t \int_s^r (\mathbb{E} [\|
     w^{\varepsilon}_{\theta} - w^{\eta}_{\theta} \|^2_{H^{\sigma}} \|
     w^{\varepsilon}_r \|_{H^{\sigma}}^2])^{1 / 2} d \theta d r \right)^2 \]
  \[ \lesssim \eta^{- 1} \left( \int_s^t \int_s^r (\mathbb{E} [\|
     w^{\varepsilon}_{\theta} - w^{\eta}_{\theta} \|^4_{H^{\sigma}}])^{1 / 4}
     (\mathbb{E} [\| w^{\varepsilon}_r \|_{H^{\sigma}}^4])^{1 / 4} d \theta d
     r \right)^2 \]
  and by Gaussianity and \eqref{eq:new3}
  \[ \lesssim {\varepsilon^{- 3} | \varepsilon - \eta |^{\vartheta}
     | t - s |^4} . \]
  The bound for $I_3$ is the same. Altogether, we obtain 
  \[ \mathbb{E} [\| Y^{\varepsilon, 2}_{s t} - Y^{\eta, 2}_{s t}
     \|^2_{H^{\sigma} \otimes H^{\sigma}}] \lesssim
     {\varepsilon^{- {4}} | \varepsilon - \eta |^{\vartheta}} | t - s
     |^{{4}} \]
  hence by Gaussianity and Kolmogorov's criterion
  \[ (\mathbb{E} [\| Y^{\varepsilon, 2}_{s t} - Y^{\eta, 2}_{s t} \|^q_{C^{2
     \beta}_2 ([0, T] ; H^{\sigma} \otimes H^{\sigma})}])^{1 / q} \lesssim
{\varepsilon^{- 2} | \varepsilon - \eta |^{\vartheta/2}} . \]
  Using now the second bound in \eqref{eq:bound} similarly to Step 3.1, we
  deduce that that there exists $\kappa_2 > 0$ so that for every $\varepsilon,
  \eta \in [0, 1]$
  \[ (\mathbb{E} [\| Y^{\varepsilon, 2} - Y^{\eta, 2} \|_{C^{2 \beta}_2 ([0,
     T] ; H^{\sigma})}^q])^{1 / q} \lesssim | \varepsilon - \eta |^{\kappa_2}
  \]
  and consequently the H\"older continuity of the mapping
  \[ [0, 1] \rightarrow C^{2\beta}_2 ([0, T] ; H^{\sigma}), \qquad \varepsilon
     \mapsto Y^{\varepsilon, 2} \]
     and the second bound in \eqref{eq:new5} follow.
\end{proof}

\section{Convergence of the It{\^o}--Stokes drift}\label{s:IS}

\subsection{Uniform estimate of $u^{\varepsilon}$}\label{s:bdu}

Recall that for a fixed $\varepsilon$ all the time integrals involved in
\eqref{eq:u} are classical Lebesgue integrals. Hence after a preliminary step
of Galerkin approximation, the usual energy inequality immediately implies a
uniform bound for $u^{\varepsilon}$. This relies on the cancellation property
of $b$ and yields a.s.
\begin{equation}
  \sup_{t \in [0, T]} \| u^{\varepsilon}_t \|_H^2 + 2 \nu \int_0^T \| \nabla
  u_r^{\varepsilon} \|_H^2 d r \leqslant \| u^{\varepsilon}_0 \|_H^2 .
  \label{eq:estu}
\end{equation}
We note that the inequality comes from the fact that we need to proceed via
Galerkin approximation and use lower semicontinuity, as for 3D Navier--Stokes
equations it is not possible to directly test the equation by the solution
itself.

\subsection{Uniform estimate of $w^{\varepsilon}$}

Using again the change of variables $\tilde{w}_t =
w^{\varepsilon}_{\varepsilon t}$ together with \eqref{est:tilew2}, we have
\begin{equation}
  \sup_{t \in [0, T]} \mathbb{E} [\| w^{\varepsilon}_t \|_{H^{\sigma}}^2] =
  \sup_{t \in [0, T]} \mathbb{E} [\| \tilde{w}_{\varepsilon^{- 1} t}
  \|_{H^{\sigma}}^2] \lesssim 1 \label{eq:estw1}
\end{equation}
and by Gaussianity this also bounds higher moments.

\subsection{Uniform estimate of $r^{\varepsilon}$}\label{s:bdr}

We proceed again via the energy inequality. Testing \eqref{eq:r} by the
solution itself after the preliminary step of Galerkin approximation,
multiplying by $\varepsilon$, taking expectation and applying the cancellation
property of $b$ as well as Young's inequality leads to
\[ \frac{\varepsilon}{2} \partial_t \mathbb{E} [\| r^{\varepsilon} \|_H^2]
   -\mathbb{E} [\langle r^{\varepsilon}, C r^{\varepsilon} \rangle] +
   \varepsilon \mathbb{E} [\| (- A)^{1 / 2} r^{\varepsilon} \|_H^2] \]
\[ \leqslant \varepsilon \mathbb{E} [\langle r^{\varepsilon}, A \varepsilon^{-
   1 / 2} w^{\varepsilon} \rangle] + \varepsilon \mathbb{E} [\langle b
   (u^{\varepsilon}, \varepsilon^{- 1 / 2} w^{\varepsilon}), r^{\varepsilon}
   \rangle] + \varepsilon \mathbb{E} [\langle b (v^{\varepsilon},
   \varepsilon^{- 1 / 2} w^{\varepsilon}), r^{\varepsilon} \rangle] \]
\[ \leqslant \frac{1}{2} \mathbb{E} [\| r^{\varepsilon} \|_H^2] + c\mathbb{E}
   [\| w^{\varepsilon} \|_{H^2}^2] + c\mathbb{E} [\| u^{\varepsilon} \|^4_H]
   {+ c \varepsilon^2}  \mathbb{E} [\| v^{\varepsilon} \|^4_H] + c\mathbb{E}
   \left[ \| w^{\varepsilon} \|_{H^{\theta_0}}^4 \right] . \]
Thus by \eqref{eq:999}
\begin{equation}
  \int_0^T \mathbb{E} [\| r^{\varepsilon} \|_{H^{\gamma}}^2] d t \lesssim
  {\varepsilon} \mathbb{E} [\| r_0^{\varepsilon} \|_H^2] +
  \int_0^T (\mathbb{E} [\| u^{\varepsilon} \|^4_H] + \varepsilon^2 \mathbb{E}
  [\| v^{\varepsilon} \|^4_H] + (\mathbb{E} [\| w^{\varepsilon}
  \|_{H^{\sigma}}^2])^2 + 1) d t. \label{eq:estr}
\end{equation}

While $u^{\varepsilon}$ is already controlled uniformly in $\varepsilon$ and
$\omega$ by \eqref{eq:estu} and $w^{\varepsilon}$ is controlled by
\eqref{eq:estw1}, we still need to establish the necessary bound for
$v^{\varepsilon}$. Again after the preliminary Galerkin approximation, we test
\eqref{eq:v} by $\| v^{\varepsilon} \|_H^{p - 2} v^{\varepsilon}$ to obtain
\[ \frac{1}{p} \partial_t \mathbb{E} [\| v^{\varepsilon} \|_H^p] -
   \varepsilon^{- 1} \mathbb{E} [\| v^{\varepsilon} \|_H^{p - 2} \langle
   v^{\varepsilon}, C v^{\varepsilon} \rangle] +\mathbb{E} [\| v^{\varepsilon}
   \|_H^{p - 2} \| (- A)^{1 / 2} v^{\varepsilon} \|_H^2] \leqslant
   \varepsilon^{- 2} \frac{p - 1}{2} \tmop{Tr} Q\mathbb{E} [\| v^{\varepsilon}
   \|_H^{p - 2}] . \]
Consequently, in view of \eqref{eq:999} it holds for $p = 2$
\[ \varepsilon \int_0^T \mathbb{E} [\| v^{\varepsilon} \|_H^2] d t \leqslant
   \frac{\varepsilon^2}{2} \mathbb{E} [\| v_0^{\varepsilon} \|_H^2] +
   \frac{T}{2} \tmop{Tr} Q \]
and for $p = 4$
\[ \varepsilon^2 \int_0^T \mathbb{E} [\| v^{\varepsilon} \|_H^4] d t \leqslant
   \frac{\varepsilon^3}{2} \mathbb{E} [\| v_0^{\varepsilon} \|_H^4] +
   \frac{3}{2} \tmop{Tr} Q \left( \frac{\varepsilon^2}{2} \mathbb{E} [\|
   v_0^{\varepsilon} \|_H^2] + \frac{T}{2} \tmop{Tr} Q \right) . \]

Plugging this bound as well as \eqref{eq:estu} and \eqref{eq:estw1} into
\eqref{eq:estr} and using the fact that the initial conditions
$r^{\varepsilon}_0 = v^{\varepsilon}_0$ are {such that
$(\varepsilon^{1 / 2} v^{\varepsilon}_0)_{\varepsilon \in (0, 1)}$ is} bounded
in $H$ uniformly in $\varepsilon$, we conclude
\begin{equation}
  \int_0^T \mathbb{E} [\| r^{\varepsilon} \|_{H^{\gamma}}^2] d t \lesssim 1.
  \label{eq:2.5}
\end{equation}
\subsection{Convergence of $r^{\varepsilon}$}\label{s:av}

We shall prove the convergence as $\varepsilon \rightarrow 0$
\begin{equation}
  r^{\varepsilon} \rightarrow \overline{r} \assign \int (- C)^{- 1} b (w, w) d
  \mu (w) \quad \tmop{weakly} \tmop{in} L^2 (0, T ; H^{\gamma}) \tmop{in} L^1
  (\Omega), \label{eq:pp}
\end{equation}
meaning that for every $\psi \in L^2 (0, T)$ and every $\varphi \in H^{-
\gamma}$,
\[ \int_0^T \psi \langle r^{\varepsilon}, \varphi \rangle d t \rightarrow
   \int_0^T \psi \left\langle \overline{r}, \varphi \right\rangle d t \quad
   \tmop{in} L^1 (\Omega) . \]
We also show that for a subsequence the convergence can be strengthened to
\begin{equation}
  r^{\varepsilon} \rightarrow \overline{r} \qquad \tmop{in} H^{- \delta} (0, T
  ; H^{\gamma - \delta})  \text{ a.s.} \label{eq:pp2}
\end{equation}
First, we recall that the uniform bound \eqref{eq:2.5} implies for a
subsequence the weak convergence of $r^{\varepsilon}$ in $L^2 (\Omega \times
[0, T] ; H^{\gamma})$, so for \eqref{eq:pp} we shall identify the limit,
strengthen the convergence in $\omega$ and show that it is not necessary to
pass to a subsequence. To this end, we observe that it follows from
\eqref{eq:r} for all $\psi \in C_c^1 ((0, T))$
\[ \int_0^T \psi r^{\varepsilon} d t = \varepsilon \int_0^T \psi (- C)^{- 1} A
   (\varepsilon^{- 1 / 2} w^{\varepsilon} + r^{\varepsilon}) d t \]
\[ + \varepsilon \int_0^T \psi (- C)^{- 1} b (u^{\varepsilon} + \varepsilon^{-
   1 / 2} w^{\varepsilon} + r^{\varepsilon}, \varepsilon^{- 1 / 2}
   w^{\varepsilon} + r^{\varepsilon}) d t + \varepsilon \int_0^T \partial_t
   \psi (- C)^{- 1} r^{\varepsilon} d t \]
\[ = \int_0^T \psi (- C)^{- 1} b (w^{\varepsilon}, w^{\varepsilon}) d t \]
\[ + \varepsilon^{1 / 2} \int_0^T \psi (- C)^{- 1} [A w^{\varepsilon} + b
   (u^{\varepsilon}, w^{\varepsilon}) + b (w^{\varepsilon}, r^{\varepsilon}) +
   b (r^{\varepsilon}, w^{\varepsilon})] d t \]
\[ + \varepsilon \int_0^T \psi (- C)^{- 1} [A r^{\varepsilon} + b
   (u^{\varepsilon}, r^{\varepsilon}) + b (r^{\varepsilon}, r^{\varepsilon})]
   d t + \varepsilon \int_0^T \partial_t \psi (- C)^{- 1} r^{\varepsilon} d t
   \backassign I^{\varepsilon}_1 + \cdots + I^{\varepsilon}_4 . \]
We claim that as $\varepsilon \rightarrow 0$
\begin{equation}
  I^{\varepsilon}_1 \rightarrow \int_0^T \psi d t \overline{r} \quad \text{in
  } L^2 (\Omega ; H), \label{eq:7}
\end{equation}
whereas $I^{\varepsilon}_2 + I^{\varepsilon}_3 + I^{\varepsilon}_4 \rightarrow
0$ in $H^{- \theta_0}$ in $L^1 (\Omega ; H^{- \theta_0})$.

For the latter convergence, we estimate by \eqref{eq:89}
\[ \mathbb{E} \left[ \| I^{\varepsilon}_2 \|_{H^{- \theta_0}} \right] \lesssim
   \varepsilon^{1 / 2} \mathbb{E} \left[ \int_0^T \| A w^{\varepsilon} + b
   (u^{\varepsilon}, w^{\varepsilon}) + b (w^{\varepsilon}, r^{\varepsilon}) +
   b (r^{\varepsilon}, w^{\varepsilon}) \|_{H^{- \theta_0}} d t \right] \]
\[ \lesssim \varepsilon^{1 / 2} \mathbb{E} \left[ \int_0^T (\| w^{\varepsilon}
   \|_H + \| u^{\varepsilon} \|_H^2 + \| w^{\varepsilon} \|_H^2 + \|
   r^{\varepsilon} \|_H^2) d t \right] \lesssim \varepsilon^{1 / 2}, \]
where the implicit constant does not depend on $\varepsilon$ and the last
inequality follows from \eqref{eq:estu}, \eqref{eq:estw1}, \eqref{eq:2.5}.
Similarly,
\[ \mathbb{E} \left[ \| I^{\varepsilon}_3 \|_{H^{- \theta_0}} + \|
   I^{\varepsilon}_4 \|_{H^{- \theta_0}} \right] \lesssim \varepsilon
   \mathbb{E} \left[ \int_0^T (\| r^{\varepsilon} \|_H + \| u^{\varepsilon}
   \|_H^2 + \| r^{\varepsilon} \|_H^2) d t \right] \lesssim \varepsilon . \]
In order to establish \eqref{eq:7}, we make use of the ergodic theorem from
Corollary \ref{p:erg}. More precisely, we rewrite
\begin{equation}
  I^{\varepsilon}_1 = \int_0^T \psi (- C)^{- 1} b (w^{\varepsilon},
  w^{\varepsilon}) d t = - \int_0^T \partial_t \psi \int_0^t (- C)^{- 1} b
  (w_s^{\varepsilon}, w_s^{\varepsilon}) d s d t \label{eq:mmm}
\end{equation}
and observe that $\tilde{w}_t = w^{\varepsilon}_{\varepsilon t}$ leads to
\[ \int_0^t (- C)^{- 1} b (w_s^{\varepsilon}, w_s^{\varepsilon}) d s =
   \varepsilon \int_0^{\varepsilon^{- 1} t} (- C)^{- 1} b (\tilde{w}_s,
   \tilde{w}_s) d s. \]
Letting $F (w) \assign (- C)^{- 1} b (w, w)$ we see that by \eqref{eq:89} $F$
satisfies the assumption of Corollary~\ref{p:erg}. Hence we deduce for every
$t \in [0, T]$
\[ \lim_{\varepsilon \rightarrow 0} \int_0^t (- C)^{- 1} b (w_s^{\varepsilon},
   w_s^{\varepsilon}) d s = t \int (- C)^{- 1} b (w, w) d \mu (w)  \quad
   \tmop{in} L^2 (\Omega ; H) . \]
Plugging this into \eqref{eq:mmm} we use \eqref{eq:estw1} and apply dominated
convergence theorem to obtain \eqref{eq:7}. Finally, using this together with
a density argument and the uniform bound \eqref{eq:2.5}, we obtain
\eqref{eq:pp}.

We note that by weak--strong convergence this is enough to pass to the limit
in the It{\^o}--Stokes drift in \eqref{eq:u}, namely in the term $b
(r^{\varepsilon}, u^{\varepsilon}) d t$, provided e.g. the sequence
$u^{\varepsilon}$ converges strongly in $L^2 (0, T ; H)$ a.s., which we prove
in Section~\ref{s:tight} below using the stochastic compactness method.

It remains to verify \eqref{eq:pp2}, i.e. to show that the convergence can be
strengthened to an a.s. convergence but in a slightly worse topology. The
uniform bound \eqref{eq:2.5} implies in particular tightness of
$r^{\varepsilon}$ in $H^{- \delta} (0, T ; H^{\gamma - \delta})$ for $\delta >
0$. Consequently, up to a subsequence, by Skorokhod representation theorem,
$r^{\varepsilon}$ converges in law in $H^{- \delta} (0, T ; H^{\gamma -
\delta})$. Since the limit, i.e. $\overline{r}$, is deterministic, the
convergence $r^{\varepsilon} \rightarrow \overline{r}$ holds true in in $H^{-
\delta} (0, T ; H^{\gamma - \delta})$ in probability. Hence, taking a further
subsequence, $r^{\varepsilon} \rightarrow \overline{r}$ in $H^{- \delta} (0, T
; H^{\gamma - \delta})$ a.s.

\section{Tightness and passage to the limit}\label{s:tight1}

This section is devoted to the completion of the proof of
Theorem~\ref{thm:main}, which proceeds in several steps. First, it is
necessary to establish further uniform estimates in the rough path formulation
\eqref{eq:urp}. Namely, we shall prove a uniform estimate for the remainders
$u^{\varepsilon, \natural}$ as well as a uniform time regularity of
$u^{\varepsilon}$. We present these results in Section~\ref{s:rem} and
Section~\ref{s:timereg} below. We note that since $r^{\varepsilon}$ is only
controlled in expectation, cf. \eqref{eq:2.5}, also the bounds from
Section~\ref{s:rem} and Section~\ref{s:timereg} are only uniform after taking
expectations. This makes the final passage to the limit argument in
Section~\ref{s:tight} delicate.

\begin{remark}
  \label{r:11}
  For future use, we recall that by \eqref{eq:new5}
  and the estimates in Section~\ref{s:rr}, we have a pathwise control of the
  required bounds of $\mathbb{A}^{\varepsilon}$, namely \eqref{eq:a1a1},
  \eqref{eq:a2a2} and \eqref{eq:a2a22} and they hold uniformly in
  $\varepsilon$. More precisely, there exists $K \in L^q (\Omega)$ for all $q
  \in [1, \infty)$ such that $\tmmathbf{P}$-a.s.
  \[ \| \mathbb{A}_{s t}^{\varepsilon, 1} \|^{1 / \alpha}_{\mathcal{L} (H^{-
     n}, H^{- (n + 1)})} + \| \mathbb{A}_{s t}^{\varepsilon, 2} \|^{1 / (2
     \alpha)}_{\mathcal{L} (H^{- n}, H^{- (n + 2)})} + \| \mathbb{A}_{s
     t}^{\varepsilon, 2, \ast} \|^{1 / (2 \alpha)}_{\mathcal{L} (H^{\theta_0 +
     1}, L^{\infty})} \]
  \begin{equation}
    \lesssim (\| Y^{\varepsilon, 1} \|_{C^{\alpha}_2 ([0, T] ; H^{\sigma})}^{1
    / \alpha} + \| Y^{\varepsilon, 2} \|_{C^{2 \alpha}_2 ([0, T] ; H^{\sigma}
    \otimes H^{\sigma})}^{1 / (2 \alpha)}) | t - s | \leqslant K | t - s |
    \backassign \omega_{\mathbb{A}} (s, t) . \label{eq:contr}
  \end{equation}
\end{remark}

\subsection{Uniform estimate of $u^{\varepsilon, \natural}$}\label{s:rem}

For a fixed $\varepsilon$, the solution $u^{\varepsilon}$ satisfies the rough
path form of \eqref{eq:u}, namely \eqref{eq:urp} in the sense of
Definition~\ref{d:sol}. We know a priori that the remainder $u^{\varepsilon,
\natural}$ belongs to $C^{p / 3 - \tmop{var}}_2 ([0, T] ; H^{- 3})$. As a
matter of fact, it is even much better as there is no rough-in-time term in
\eqref{eq:u}. But this regularity depends on $\varepsilon$ and we seek an
estimate on $u^{\varepsilon, \natural}$ which is uniform in $\varepsilon$. The
equation \eqref{eq:urp} is very similar to the rough path formulation of the
3D Navier--Stokes equations perturbed by a transport noise in
{\cite{MR3918521}}, see Section 2.5 and particularly (2.18) in
{\cite{MR3918521}}. The difference is that the drift part $\mu^{\varepsilon}$
in our case includes the additional It{\^o}--Stokes drift $\mu^{\varepsilon,
2}$, while $\mu^{\varepsilon, 1}$ coincides with the drift in
{\cite{MR3918521}}.

In Section~3 in {\cite{MR3918521}} and particularly in Lemma~3.1 and
Lemma~3.3, two bounds on $\mu^{\varepsilon, 1}$ were used and they are
satisfied in our case as well, namely,
\begin{equation*}
  \| \delta \mu^{\varepsilon, 1}_{s t} \|_{H^{- 1}} \lesssim \int_s^t (1 + \|
  u^{\varepsilon}_{\tau} \|_{H^1})^2 d \tau \label{eq:43}
\end{equation*}
and
\begin{equation*}
  \| \delta \mu^{\varepsilon, 1}_{s t} \|_{H^{- \theta_0}} \lesssim | t - s |
  (1 + \| u^{\varepsilon} \|_{L^{\infty}_T H})^2 . \label{eq:44}
\end{equation*}
In the case of $\mu^{\varepsilon, 2}$, we estimate for a fixed $\varepsilon$
\[ | \delta \mu^{\varepsilon, 2}_{s t} (\varphi) | = \left| \int_s^t \langle b
   (r^{\varepsilon}_{\tau}, u^{\varepsilon}_{\tau}), \varphi \rangle d \tau
   \right| \lesssim \| \varphi \|_{L^{\infty}} \int_s^t \|
   r^{\varepsilon}_{\tau} \|_H \| u^{\varepsilon}_{\tau} \|_{H^1} d \tau \]
\begin{equation*}
  \lesssim \| \varphi \|_{L^{\infty}} \left( \int_s^t \|
  r^{\varepsilon}_{\tau} \|_H^2 d \tau \right)^{1 / 2} \left( \int_s^t \|
  u^{\varepsilon}_{\tau} \|^2_{H^1} d \tau \right)^{1 / 2} . \label{eq:mur1}
\end{equation*}
While a.s. the right hand side defines a control and hence $\mu^{\varepsilon,
2}$ is of finite variation e.g. in $H^{- \theta_0 + 1}$ by Sobolev embedding,
this control is not uniform in $\varepsilon$. Indeed, in view of
\eqref{eq:2.5} a uniform in $\varepsilon$ bound can only be obtained in
expectation and the a.s. bound following from \eqref{eq:pp2} is in a worse
space. Hence we prefer to use \eqref{eq:2.5} in what follows. This is a
striking difference from the setting of {\cite{MR3918521}}. Another difference
lies in the weaker spatial regularity of $\mu^{\varepsilon, 2}$ compared to
$\mu^{\varepsilon, 1}$, namely, $H^{- \theta_0 + 1}$ instead of $H^{- 1}$. As
a consequence of this fact, we require the additional conditions on the
unbounded rough driver $(\mathbb{A}^{\varepsilon, 1}, \mathbb{A}^{\varepsilon,
2})$, cf. \eqref{eq:a1a11}, \eqref{eq:a2a22} compared to (2.10) in
{\cite{MR3918521}}.

We denote the control associated to the $C^{1 - \tmop{var}} ([0, T] ; H^{-
\theta_0 + 1})$ norm of $\mu^{\varepsilon}$ by
\begin{equation}
  \omega_{\mu^{\varepsilon}} (s, t) = \int_s^t (1 + \| u^{\varepsilon}_{\tau}
  \|_{H^1})^2 d \tau + \left( \int_s^t \| r^{\varepsilon}_{\tau} \|_H^2 d \tau
  \right)^{1 / 2} \left( \int_s^t \| u^{\varepsilon}_{\tau} \|^2_{H^1} d \tau
  \right)^{1 / 2} . \label{eq:26a}
\end{equation}
With this in hand, we repeat the proof of Lemma~3.1 in {\cite{MR3918521}} in
order to obtain the counterpart of (3.3). Applying the increment operator
$\delta$ to \eqref{eq:urp} for $s \leqslant \theta \leqslant t$ and using the
Chen's relation yields
\[ \delta u^{\varepsilon, \natural}_{s \theta t} (\varphi) = \delta
   u^{\varepsilon}_{s \theta} (\mathbb{A}^{\varepsilon, 2, \ast}_{\theta t}
   \varphi) + u^{\varepsilon, \sharp}_{s \theta} (\mathbb{A}^{\varepsilon, 1,
   \ast}_{\theta t} \varphi), \]
where
\begin{equation}
  u^{\varepsilon, \sharp}_{s \theta} = \delta u^{\varepsilon}_{s \theta}
  -\mathbb{A}^{\varepsilon, 1}_{s \theta} u_s^{\varepsilon} = \delta
  \mu^{\varepsilon}_{s \theta} +\mathbb{A}^{\varepsilon, 2}_{s \theta}
  u^{\varepsilon}_s + u^{\varepsilon, \natural}_{s \theta} . \label{eq:sharp}
\end{equation}
To exploit the interplay between time and space regularity, we let
$(J^{\eta})_{\eta \in (0, 1]}$ be a family of smoothing operators as in
{\cite{MR3918521}} and we split
\begin{equation}
  \delta u^{\varepsilon, \natural}_{s \theta t} (\varphi) = \delta
  u^{\varepsilon, \natural}_{s \theta t} ((\tmop{Id} - J^{\eta}) \varphi) +
  \delta u^{\varepsilon, \natural}_{s \theta t} (J^{\eta} \varphi) .
  \label{eq:split}
\end{equation}
In the estimate of the first term in \eqref{eq:split}, we make use of the
middle expression in \eqref{eq:sharp}, i.e. for $\varphi \in H^3$, $\| \varphi
\|_{H^3} \leqslant 1$
\[ | \delta u^{\varepsilon, \natural}_{s \theta t} ((\tmop{Id} - J^{\eta})
   \varphi) | \leqslant | \delta u^{\varepsilon}_{s \theta}
   (\mathbb{A}^{\varepsilon, 2, \ast}_{\theta t} (\tmop{Id} - J^{\eta})
   \varphi) | + | (\delta u^{\varepsilon}_{s \theta} -\mathbb{A}^{\varepsilon,
   1}_{s \theta} u_s^{\varepsilon}) (\mathbb{A}^{\varepsilon, 1, \ast}_{\theta
   t} (\tmop{Id} - J^{\eta}) \varphi) | \]
\[ \leqslant \| u^{\varepsilon} \|_{L_T^{\infty} H} (\|
   \mathbb{A}^{\varepsilon, 2, \ast}_{\theta t} (\tmop{Id} - J^{\eta}) \varphi
   \|_H + \| \mathbb{A}^{\varepsilon, 1, \ast}_{\theta t} (\tmop{Id} -
   J^{\eta}) \varphi \|_H + \| \mathbb{A}^{\varepsilon, 1, \ast}_{s \theta}
   \mathbb{A}^{\varepsilon, 1, \ast}_{\theta t} (\tmop{Id} - J^{\eta}) \varphi
   \|_H) . \]
Recalling the notation \eqref{eq:contr}, the mollification estimates (2.7) in
{\cite{MR3918521}} and letting $p = 1 / \alpha$, we obtain
\[ | \delta u^{\varepsilon, \natural}_{s \theta t} ((\tmop{Id} - J^{\eta})
   \varphi) | \lesssim \| u^{\varepsilon} \|_{L_T^{\infty} H}
   (\omega_{\mathbb{A}} (s, t)^{2 / p} \| (\tmop{Id} - J^{\eta}) \varphi
   \|_{H^2} + \omega_{\mathbb{A}} (s, t)^{1 / p} \| (\tmop{Id} - J^{\eta})
   \varphi \|_{H^1}) \]
\begin{equation}
  \lesssim \| u^{\varepsilon} \|_{L_T^{\infty} H} (\omega_{\mathbb{A}} (s,
  t)^{2 / p} \eta + \omega_{\mathbb{A}} (s, t)^{1 / p} \eta^2) .
  \label{eq:ppp}
\end{equation}
In order to estimate the second term in \eqref{eq:split}, we apply the
expression on the very right hand side of \eqref{eq:sharp} for
$u^{\varepsilon, \sharp}$ and we also use the equation \eqref{eq:urp} for
$\delta u^{\varepsilon}$. This gives
\[ \delta u^{\varepsilon, \natural}_{s \theta t} (J^{\eta} \varphi) = \delta
   \mu^{\varepsilon}_{s \theta} (\mathbb{A}^{\varepsilon, 2, \ast}_{\theta t}
   J^{\eta} \varphi) + u^{\varepsilon}_s (\mathbb{A}_{s \theta}^{\varepsilon,
   1, \ast} \mathbb{A}^{\varepsilon, 2, \ast}_{\theta t} J^{\eta} \varphi) +
   u^{\varepsilon}_s (\mathbb{A}_{s \theta}^{\varepsilon, 2, \ast}
   \mathbb{A}^{\varepsilon, 2, \ast}_{\theta t} J^{\eta} \varphi) +
   u^{\varepsilon, \natural}_{s \theta} (\mathbb{A}^{\varepsilon, 2,
   \ast}_{\theta t} J^{\eta} \varphi) \]
\[ + \delta \mu^{\varepsilon}_{s \theta} (\mathbb{A}^{\varepsilon, 1,
   \ast}_{\theta t} \varphi) + u^{\varepsilon}_s (\mathbb{A}^{\varepsilon, 2,
   \ast}_{s \theta} \mathbb{A}^{\varepsilon, 1, \ast}_{\theta t} \varphi) +
   u^{\varepsilon, \natural}_{s \theta} (\mathbb{A}^{\varepsilon, 1,
   \ast}_{\theta t} \varphi) . \]
Let us first focus on the terms with $\mu^{\varepsilon, 1}$, since they did
not appear in {\cite{MR3918521}}. In particular, we estimate using
\eqref{eq:a1a11}
\[ | \delta \mu^{\varepsilon, 1}_{s \theta} (\mathbb{A}_{\theta
   t}^{\varepsilon, 1, \ast} J^{\eta} \varphi) | \leqslant
   \omega_{\mu^{\varepsilon}} (s, t) \| \mathbb{A}_{\theta t}^{\varepsilon, 1,
   \ast} J^{\eta} \varphi \|_{L^{\infty}} \lesssim \omega_{\mu^{\varepsilon}}
   (s, t) \omega_{\mathbb{A}} (s, t)^{1 / p} \| \varphi \|_{H^3} \lesssim
   \omega_{\mu^{\varepsilon}} (s, t) \omega_{\mathbb{A}} (s, t)^{1 / p}, \]
and similarly using \eqref{eq:a2a22}
\[ | \delta \mu^{\varepsilon, 1}_{s \theta} (\mathbb{A}_{\theta
   t}^{\varepsilon, 2, \ast} J^{\eta} \varphi) | \leqslant
   \omega_{\mu^{\varepsilon}} (s, t) \| \mathbb{A}_{\theta t}^{\varepsilon, 2,
   \ast} J^{\eta} \varphi \|_{L^{\infty}} \]
\[ \lesssim \omega_{\mu^{\varepsilon}} (s, t) \omega_{\mathbb{A}} (s, t)^{2 /
   p} \| J^{\eta} \varphi \|_{H^{\theta_0 + 1}} \lesssim
   \omega_{\mu^{\varepsilon}} (s, t) \omega_{\mathbb{A}} (s, t)^{2 / p}
   \eta^{2 - \theta_0} . \]
The other terms can be estimated as in {\cite{MR3918521}}.

Recall that we aim at showing a bound of $u^{\varepsilon, \natural}$ in $C^{p
/ 3 - \tmop{var}}_{2, \tmop{loc}} ([0, T] ; H^{- 3})$, that is, we intend to
bound the control
\[ \omega_{\varepsilon, \natural} (s, t) \assign \| u^{\varepsilon, \natural}
   \|^{p / 3}_{C_2^{p / 3 - \tmop{var}} ([s, t] ; H^{- 3})} . \]
With this notation, we obtain
\[ | \delta u^{\varepsilon, \natural}_{s \theta t} (J^{\eta} \varphi) |
   \leqslant \omega_{\mu^{\varepsilon}} (s, t) \omega_{\mathbb{A}} (s, t)^{2 /
   p} (1 + \eta^{2 - \theta_0}) + \| u^{\varepsilon} \|_{L_T^{\infty} H}
   \omega_{\mathbb{A}} (s, t)^{3 / p} \]
\[ + \| u^{\varepsilon} \|_{L_T^{\infty} H} \omega_{\mathbb{A}} (s, t)^{4 / p}
   \eta^{- 1} + \omega_{\varepsilon, \natural} (s, t)^{3 / p}
   \omega_{\mathbb{A}} (s, t)^{2 / p} \eta^{- 2} \]
\[ + \omega_{\mu^{\varepsilon}} (s, t) \omega_{\mathbb{A}} (s, t)^{1 / p} + \|
   u^{\varepsilon} \|_{L_T^{\infty} H} \omega_{\mathbb{A}} (s, t)^{3 / p} +
   \omega_{\varepsilon, \natural} (s, t)^{3 / p} \omega_{\mathbb{A}} (s, t)^{1
   / p} \eta^{- 1} . \]
With the choice of $\eta = \omega_{\mathbb{A}} (s, t)^{1 / p} \lambda$ for
some constant $\lambda > 0$ to be chosen below, the above bound reads as
\[ | \delta u^{\varepsilon, \natural}_{s \theta t} (J^{\eta} \varphi) |
   \lesssim \omega_{\mu^{\varepsilon}} (s, t) (\omega_{\mathbb{A}} (s, t)^{2 /
   p} + \omega_{\mathbb{A}} (s, t)^{(4 - \theta_0) / p} \lambda^{2 - \theta_0}
   + \omega_{\mathbb{A}} (s, t)^{1 / p}) \]
\[ + \| u^{\varepsilon} \|_{L_T^{\infty} H} \omega_{\mathbb{A}} (s, t)^{3 / p}
   (1 + \lambda^{- 1}) + \omega_{\varepsilon, \natural} (s, t)^{3 / p}
   (\lambda^{- 2} + \lambda^{- 1}), \]
where the implicit constant is universal and independent of $\varepsilon, s,
\theta, t$ or other data of the equations. Combining this with \eqref{eq:ppp}
we obtain
\[ \| \delta u^{\varepsilon, \natural}_{s \theta t} \|_{H^{- 3}} \lesssim
   \omega_{\mu^{\varepsilon}} (s, t) (\omega_{\mathbb{A}} (s, t)^{2 / p} +
   \omega_{\mathbb{A}} (s, t)^{(4 - \theta_0) / p} \lambda^{2 - \theta_0} +
   \omega_{\mathbb{A}} (s, t)^{1 / p}) \]
\[ + \| u^{\varepsilon} \|_{L^{\infty} H} \omega_{\mathbb{A}} (s, t)^{3 / p}
   (1 + \lambda^{- 1} + \lambda + \lambda^2) + \omega_{\varepsilon, \natural}
   (s, t)^{3 / p} (\lambda^{- 2} + \lambda^{- 1}) . \]

To get the desired estimate of $u^{\varepsilon, \natural}$, we require that
all the terms on the right hand side are written as $\omega (s, t)^{3 / p}$
for some control $\omega$. Recall that for every two controls $\omega_1$ and
$\omega_2$, $\omega_1^a \omega_2^b$ is a control provided $a + b \geqslant 1$.
Thus particularly for the first term on the right hand side, we need $1 + (4 -
\theta_0) / p \geqslant 3 / p$. If $\theta_0 = 1 + 3 / 2 + \delta$ with
$\delta > 0$ small, this boils down to $1 / p \leqslant 2 / (3 + 2 \delta)$
which is indeed possible for every $p \in (2, 3)$ by choosing $\delta$
sufficiently small.

Applying the sewing lemma, Lemma B.1 in {\cite{MR3918521}}, we deduce
\[ \| u^{\varepsilon, \natural}_{s t} \|_{H^{- 3}} \lesssim
   \omega_{\mu^{\varepsilon}} (s, t) (\omega_{\mathbb{A}} (s, t)^{2 / p} +
   \omega_{\mathbb{A}} (s, t)^{(4 - \theta_0) / p} \lambda^{2 - \theta_0} +
   \omega_{\mathbb{A}} (s, t)^{1 / p}) \]
\[ + \| u^{\varepsilon} \|_{L_T^{\infty} H} \omega_{\mathbb{A}} (s, t)^{3 / p}
   (1 + \lambda^{- 1} + \lambda + \lambda^2) + \omega_{\varepsilon, \natural}
   (s, t)^{3 / p} (\lambda^{- 2} + \lambda^{- 1}) \]
and accordingly
\[ \omega_{\varepsilon, \natural} (s, t) \leqslant c
   [\omega_{\mu^{\varepsilon}} (s, t)^{p / 3} (\omega_{\mathbb{A}} (s, t)^{2 /
   3} + \omega_{\mathbb{A}} (s, t)^{(4 - \theta_0) / 3} \lambda^{p (2 -
   \theta_0) / 3} + \omega_{\mathbb{A}} (s, t)^{1 / 3}) \nobracket \]
\[ + \| u^{\varepsilon} \|^{p / 3}_{L_T^{\infty} H} \omega_{\mathbb{A}} (s, t)
   (1 + \lambda^{- 1} + \lambda + \lambda^2)^{p / 3} + \nobracket
   \omega_{\varepsilon, \natural} (s, t) (\lambda^{- 2} + \lambda^{- 1})^{p /
   3}] . \]
To close the estimate, we choose $\lambda$ such that $c (\lambda^{- 2} +
\lambda^{- 1})^{p / 3} = 1 / 2$. Then we can absorb $\omega_{\varepsilon,
\natural}$ from the right hand side into the left hand side. In addition, for
the mollifier estimates, we need to guarantee $\eta \in (0, 1]$. In view of
the definition of $\eta$ above and $\omega_{\mathbb{A}}$ and $K$ in
\eqref{eq:contr} we have
\[ \eta = (K | t - s |)^{1 / p} \lambda . \]
Hence with the above choice of (deterministic) $\lambda$, there exists $L = L
(\omega)$ so that $\eta \leqslant 1$ provided $| t - s | \leqslant L
(\omega)$. In particular, we may choose $L = \frac{1}{K \lambda^p} \sim
\frac{1}{K}$.

This leads us to the following analog of (3.3) in {\cite{MR3918521}}
\begin{equation}
  \omega_{\varepsilon, \natural} (s, t) \lesssim \omega_{\mu^{\varepsilon}}
  (s, t)^{p / 3} (\omega_{\mathbb{A}} (s, t)^{2 / 3} + \omega_{\mathbb{A}} (s,
  t)^{(4 - \theta_0) / 3} + \omega_{\mathbb{A}} (s, t)^{1 / 3}) + \|
  u^{\varepsilon} \|_{L^{\infty}_T H}^{p / 3} \omega_{\mathbb{A}} (s, t),
  \label{eq:3.3}
\end{equation}
which holds true provided $| t - s |$ was sufficiently small, given by the
random but $\varepsilon$-independent upper bound $L$. This is also the reason
why the remainder only belongs to the local space $C^{p / 3 - \tmop{var}}_{2,
\tmop{loc}} ([0, T] ; H^{- 3})$.

Finally, we note that for a given constant $a > 0$, it holds
$\omega_{\varepsilon, \natural} (s, t) \leqslant a$ provided $| t - s |
\leqslant \tilde{L}^{\varepsilon}$ where $\tilde{L}^{\varepsilon}$ is random
and chosen in dependence of $a, \omega_{\mu^{\varepsilon}} (0, T), \|
u^{\varepsilon} \|_{L_T^{\infty} H}$ and $K$ based on \eqref{eq:3.3} and
\eqref{eq:contr}. For instance, we may define
\begin{equation}
  \tilde{L}^{\varepsilon} \assign \frac{1}{K} \min \left\{ \left( \frac{a}{4
  \omega_{\mu^{\varepsilon}} (0, T)^{p / 3}} \right)^{3 / 2}, \left(
  \frac{a}{4 \omega_{\mu^{\varepsilon}} (0, T)^{p / 3}} \right)^{3 / (4 -
  \theta_0)}, \left( \frac{a}{4 \omega_{\mu^{\varepsilon}} (0, T)^{p / 3}}
  \right)^3, \frac{a}{4 \| u^{\varepsilon} \|^{p / 3}_{L_T^{\infty} H}}
  \right\} . \label{eq:L}
\end{equation}
\subsection{Uniform time regularity of $u^{\varepsilon}$}\label{s:timereg}

As the next step, we proceed with an analog of Lemma~3.3 in {\cite{MR3918521}}
proving time regularity of $u^{\varepsilon}$. Due to the worse spatial
regularity of $\mu^{\varepsilon, 2}$ we estimate only
$\omega_{u^{\varepsilon}} (s, t) \assign \| u \|^p_{C^{p - \tmop{var}} ([s, t]
; H^{- \theta_0 + 1})}$ as
\begin{equation}
  \omega_{u^{\varepsilon}} (s, t) \lesssim (1 + \| u^{\varepsilon}
  \|_{L^{\infty}_T H})^p (\omega_{\varepsilon, \natural} (s, t) +
  \omega_{\mu^{\varepsilon}} (s, t)^p + \omega_{\mathbb{A}} (s, t)),
  \label{eq:80}
\end{equation}
which holds true provided $| t - s | \leqslant \tilde{L}^{\varepsilon}
(\omega)$ for a random bound $\tilde{L}^{\varepsilon}$ given by (cf.
\eqref{eq:L})
\[ \tilde{L}^{\varepsilon} \assign \frac{1}{K} \min \left\{ \left( \frac{1 /
   2^p}{4 \omega_{\mu^{\varepsilon}} (0, T)^{p / 3}} \right)^{3 / 2}, \left(
   \frac{1 / 2^p}{4 \omega_{\mu^{\varepsilon}} (0, T)^{p / 3}} \right)^{3 / (4
   - \theta_0)}, \left( \frac{1 / 2^p}{4 \omega_{\mu^{\varepsilon}} (0, T)^{p
   / 3}} \right)^3, \frac{1 / 2^p}{4 \| u^{\varepsilon} \|^{p /
   3}_{L_T^{\infty} H}}, 1 / 2^p \right\} . \]
Within this estimate we made use of the bound for $\varphi \in H^{\theta_0 -
1}$, $\| \varphi \|_{H^{\theta_0}} \leqslant 1$
\[ | \delta \mu^{\varepsilon}_{s t} (J^{\eta} \varphi) | \leqslant
   \omega_{\mu^{\varepsilon}} (s, t) \| J^{\eta} \varphi \|_{H^{\theta_0 - 1}}
   \lesssim \omega_{\mu^{\varepsilon}} (s, t) \]
and we chose $\eta = \omega_{\varepsilon, \natural} (s, t)^{1 / p} +
\omega_{\mathbb{A}} (s, t)^{1 / p}$ as in Lemma~3.3 in {\cite{MR3918521}}. The
requirement $\eta \in (0, 1]$ is the reason for the upper bound
$\tilde{L}^{\varepsilon} (\omega)$.

Accordingly, we can bound
\[ \| u^{\varepsilon} \|^p_{C^{p - \tmop{var}} ([0, T] ; H^{- \theta_0 + 1})}
   = \sup_{\pi \in \mathcal{P} ([0, T])} \sum_{(s, t) \in \pi} \| \delta
   u^{\varepsilon}_{s t} \|_{H^{- \theta_0 + 1}}^p, \]
where the supremum is taken over all partitions $\pi$ of the interval $[0,
T]$. Here, we can apply \eqref{eq:80} whenever the mesh size of a partition
$\pi$ is at most $\tilde{L}^{\varepsilon}$. If the mesh size of $\pi$ is
bigger, we can refine it so that the sum of norms of increments is bounded by
a sum over the finer partition times an implicit constant which is at most $(T
/ \tilde{L}^{\varepsilon})^p$. We deduce
\[ \| u^{\varepsilon} \|^p_{C^{p - \tmop{var}} ([0, T] ; H^{- \theta_0 + 1})}
   \lesssim \left( \frac{T}{\tilde{L}^{\varepsilon}} \right)^p (1 + \|
   u^{\varepsilon} \|_{L_T^{\infty} H})^p (\omega_{\varepsilon, \natural} (0,
   T) + \omega_{\mu^{\varepsilon}} (0, T)^p +
   \omega_{\mathbb{A}} (0, T)), \]
and we used the fact that controls are super-additive and for a control
$\omega$ and $a \geqslant 1$, also $\omega^a$ is a control, i.e.
super-additive.

In view of \eqref{eq:26a}, \eqref{eq:3.3}, \eqref{eq:estu} and \eqref{eq:2.5}
and using the form of $\tilde{L}^{\varepsilon}$ we therefore obtain for some
$\kappa > 0$
\[ \mathbb{E} \left[ \| u^{\varepsilon} \|^{\kappa}_{C^{p - \tmop{var}} ([0,
   T] ; H^{- \theta_0 + 1})} \right] \lesssim_T {(1 + \| u^{\varepsilon}_0
   \|_H)^{\kappa}}  \]
\[ \times \mathbb{E} [K^{\kappa} (\omega_{\mu^{\varepsilon}} (0, T)^{\kappa p
   / 2} + \omega_{\mu^{\varepsilon}} (0, T)^{\kappa p / (4 - \theta_0)} +
   \omega_{\mu^{\varepsilon}} (0, T)^{\kappa p} + \| u^{\varepsilon}
   \|^{\kappa p / 3}_{L_T^{\infty} H}) \nobracket \]
\[ \nobracket \times (\omega_{\varepsilon, \natural} (0, T)^{\kappa / p} +
   \omega_{\mu^{\varepsilon}} (0, T)^{\kappa} + \omega_{\mathbb{A}} (0,
   T)^{\kappa / p})] . \]
The reason we included $\kappa$ is to guarantee the integrability of
$\omega_{\mu^{\varepsilon}}$ which due to the limited bound of
$r^{\varepsilon}$ only admits the second moment (cf. \eqref{eq:2.5} and
\eqref{eq:26a}). Choosing $\kappa$ small enough, recalling that $K$ and
$\omega_{\mathbb{A}}$ admit moments of all orders by Remark~\ref{r:11}, we
further estimate as
\begin{equation}
  \mathbb{E} \left[ \| u^{\varepsilon} \|^{\kappa}_{C^{p - \tmop{var}} ([0, T]
  ; H^{- \theta_0 + 1})} \right] \lesssim 1 +\mathbb{E}
  [\omega_{\mu^{\varepsilon}} (0, T)^2] \lesssim 1, \label{eq:timeregu}
\end{equation}
where we bounded uniformly in $\varepsilon$ as follows
\[ \mathbb{E} [\omega_{\mu^{\varepsilon}} (0, T)^2] \lesssim \mathbb{E} \left[
   \left( \int_0^T (1 + \| u^{\varepsilon}_{\tau} \|_{H^1})^2 d \tau \right)^2
   + \int_0^T \| r^{\varepsilon}_{\tau} \|_H^2 d \tau \int_0^T \|
   u^{\varepsilon}_{\tau} \|^2_{H^1} d \tau \right] \]
\[ \lesssim 1 + \| u^{\varepsilon}_0 \|_H^4 + \| u^{\varepsilon}_0 \|_H^2
   \mathbb{E} \left[ \int_0^T \| r^{\varepsilon}_{\tau} \|_H^2 d \tau \right]
   \lesssim 1. \]
This shows a uniform bound of $u^{\varepsilon}$ in $C^{p - \tmop{var}} ([0, T]
; H^{- \theta_0 + 1})$, which by Markov inequality readily implies tightness
of $u^{\varepsilon}$ needed in the next section.

\subsection{Passage to the limit}\label{s:tight}

With the above results at hand, we are able to perform the final passage to
the limit in the rough path formulation \eqref{eq:urp} of \eqref{eq:u} and
obtain a probabilistically weak rough path solution to \eqref{eq:ulim} in the
sense of Definition~\ref{d:sol2}. This will complete the proof of our main
result, Theorem~\ref{thm:main}.

Even though a number of terms converge directly on any given probability space
where the approximate system is solved, for $u^{\varepsilon}$ we only obtain
uniform estimates implying compactness. Recall also that the bound of
$r^{\varepsilon}$ in $L^2 (0, T ; H^{\gamma})$ and consequently also the time
regularity of $u^{\varepsilon}$ as well as the bound for the remainder
$u^{\varepsilon, \natural}$ are only uniform in $\varepsilon$ after taking
expectations.

We base our compactness argument on Jakubowski--Skorokhod's representation
theorem and change the probability space. We use Jakubowski--Skorokhod's
representation theorem instead of the classical Skorokhod's representation
theorem as one of our function spaces below is not Polish, but falls in the
category of the so-called sub-Polish space where Jakubowski--Skorokhod's
theorem applies, see Section 2.7 in {\cite{BFH18}} for more details.

More precisely, we claim that the above results imply tightness of
$(u^{\varepsilon}, r^{\varepsilon}, Q^{1 / 2} W^{\varepsilon}, Y^{\varepsilon,
1}, Y^{\varepsilon, 2})$ in the path space
\[ \mathcal{X} \assign (C_{\tmop{weak}} ([0, T] ; H) \cap L^2 (0, T ; H))
   \times (L^2 (0, T ; H^{\gamma}), w) \times C ([0, T] ; H) \times
   \mathcal{C}^{\alpha} ([0, T] ; H^{\sigma}), \]
where $(L^2 (0, T ; H^{\gamma}), w)$ denotes $L^2 (0, T ; H^{\gamma})$
equipped with the weak topology. Indeed, the tightness of $u^{\varepsilon}$
follows from \eqref{eq:estu} and \eqref{eq:timeregu} by Lemma~A.2 in
{\cite{MR3918521}}, the tightness of $r^{\varepsilon}$ is a consequence of
\eqref{eq:estr} since bounded sets are relatively compact with respect to the
weak topology, the tightness of $Q^{1 / 2} W^{\varepsilon}$ is immediate since
the law does not depend on $\varepsilon$, and the tightness of the rough path
$(Y^{\varepsilon, 1}, Y^{\varepsilon, 2})$ follows from Theorem~\ref{l:11}.

Accordingly, Jakubowski--Skorokhod's representation theorem yields a
subsequence (still indexed by $\varepsilon$ for notational simplicity) and a
new probability space $(\bar{\Omega}, \bar{\mathcal{F}},
\overline{\tmmathbf{P}})$ with $\mathcal{X}$-valued random variables
$(\bar{u}^{\varepsilon}, \bar{r}^{\varepsilon}, Q^{1 / 2}
\bar{W}^{\varepsilon}, \bar{Y}^{\varepsilon, 1}, \bar{Y}^{\varepsilon, 2})$
and $\left( \bar{u}, \overline{\overline{r}}, Q^{1 / 2} \bar{W}, \bar{Y}^1,
\bar{Y}^2 \right)$ such that $\overline{\tmmathbf{P}}$-a.s.
\[ (\bar{u}^{\varepsilon}, \bar{r}^{\varepsilon}, Q^{1 / 2}
   \bar{W}^{\varepsilon}, \bar{Y}^{\varepsilon, 1}, \bar{Y}^{\varepsilon, 2})
   \rightarrow \left( \bar{u}, \overline{\overline{r}}, Q^{1 / 2} \bar{W},
   \bar{Y}^1, \bar{Y}^2 \right) \qquad \tmop{in} \qquad \mathcal{X}. \]
Then $(\bar{u}^{\varepsilon}, \bar{r}^{\varepsilon}, \bar{Y}^{\varepsilon, 1},
\bar{Y}^{\varepsilon, 2})$ satisfies \eqref{eq:urp}. For a detailed argument
identifying the rough path $(\bar{Y}^{\varepsilon, 1}, \bar{Y}^{\varepsilon,
2})$ we refer to Step 2 in the proof of Proposition~15 in {\cite{FHLN20}}.
Consequently, we obtain also the analogous estimate to \eqref{eq:3.3} for the
corresponding remainder $\bar{u}^{\varepsilon, \natural}$ on the new
probability space. Here $\omega_{\mathbb{A}^{\varepsilon}},
\omega_{\mu^{\varepsilon}}$ are replaced by their counterparts on the new
probability space defined as in \eqref{eq:contr} and \eqref{eq:26a} using
$(\bar{u}^{\varepsilon}, \bar{r}^{\varepsilon}, \bar{Y}^{\varepsilon, 1},
\bar{Y}^{\varepsilon, 2})$ in place of $(u^{\varepsilon}, r^{\varepsilon},
Y^{\varepsilon, 1}, Y^{\varepsilon, 2})$. The result of Theorem~\ref{l:11}
also holds true on the new probability
space.

One significant difference now follows from the $\overline{\tmmathbf{P}}$-a.s.
convergence of $\bar{r}^{\varepsilon}$: in comparison to \eqref{eq:estr} we
gained the following $\overline{\tmmathbf{P}}$-a.s. uniform bound for
$\bar{r}^{\varepsilon}$:
\begin{equation}
  \int_0^{ T} \| \bar{r}^{\varepsilon} \|_{H^{\gamma}}^2 d t \leqslant N
  \label{eq:rnew}
\end{equation}
for some random and $\overline{\tmmathbf{P}}$-a.s. finite constant $N = N
(\bar{\omega}) > 0.$ We employ this fact below to obtain the desired
regularity of the limit remainder.

Now, we can pass to the limit in \eqref{eq:urp}. First, we note that all the
terms except for the remainder $\bar{u}^{\varepsilon, \natural}$ converge
$\overline{\tmmathbf{P}}$-a.s. to the desired limits. More precisely, based on
the strong convergence in $L^2 (0, T ; H)$ of $\bar{u}^{\varepsilon}$ we are
able to pass to the limit in the Stokes as well as the convective term and
combined with the weak convergence of $\bar{r}^{\varepsilon}$ in $L^2 (0, T ;
H^{\gamma})$ we also pass to the limit in the It{\^o}--Stokes drift. As in
Section~\ref{s:av} we obtain that $r^{\varepsilon} \rightarrow \overline{r}$
weakly in $L^2 (0, T ; H^{\gamma})$ $\overline{\tmmathbf{P}}$-a.s., where
$\overline{r}$ is deterministic and time independent, hence $\bar{r} =
\overline{\overline{r}}$. This identifies the It{\^o}--Stokes drift velocity
in \eqref{eq:ulim}. The identification of the driving rough path $(\bar{Y}^1,
\bar{Y}^2)$ as the corresponding lift of $\bar{B} = (- C)^{- 1} Q^{1 / 2}
\bar{W}$ follows as in Theorem~\ref{l:11}.

Accordingly, also $\bar{u}^{\varepsilon, \natural}$ converges
$\overline{\tmmathbf{P}}$-a.s. to some $\bar{u}^{\natural}$ satisfying the
formula from Definition~\ref{d:sol2}. Now, it only remains to prove that
$\bar{u}^{\natural}$ is an honest remainder, namely, it belongs to $C^{p / 3 -
\tmop{var}}_{2, \tmop{loc}} ([0, T] ; H^{- 3})$ $\overline{\tmmathbf{P}}$-a.s.

Recall Remark~\ref{r:11} and note that the analogous claim also holds true on
the new probability space. We denote the random constant analogous to $K$ as
$\bar{K}$ and denote by $\bar{L} \sim 1 / \bar{K}$ the constant determining
the length of admissible intervals in the counterpart of \eqref{eq:3.3} on the
new probability space. Let $I = I (\bar{\omega}) = [\sigma (\bar{\omega}),
\tau (\bar{\omega})] \subset [0, T]$ be an arbitrary random time interval of
length at most $\bar{L} (\omega)$. Then by the analogous estimate to
\eqref{eq:3.3} and \eqref{eq:26a} on the new probability space,
\eqref{eq:estu} and \eqref{eq:rnew}, we have the uniform
$\overline{\tmmathbf{P}}$-a.s. bound
\begin{equation}
  \| \bar{u}^{\varepsilon, \natural} \|_{C_2^{p / 3 - \tmop{var}} (I ; H^{-
  3})} = \bar{\omega}_{\varepsilon, \natural} (\sigma, \tau)^{3 / p} \lesssim
  1 + \bar{\omega}_{\mu^{\varepsilon}} (0, T) \leqslant M, \label{eq:65}
\end{equation}
where $M$ is random and $\overline{\tmmathbf{P}}$-a.s. finite and
$\bar{\omega}_{\varepsilon, \natural}$ and $\bar{\omega}_{\mu^{\varepsilon}}$,
respectively, denote the controls associated to the remainder
$\bar{u}^{\varepsilon, \natural}$ and the drift $\bar{\mu}^{\varepsilon}$
defined as in \eqref{eq:drift} but on the new probability space.

Based on this, we can use lower semicontinuity to conclude that
$\bar{u}^{\natural}$ is $\overline{\tmmathbf{P}}$-a.s. a remainder, namely, it
belongs to $C_{2, \tmop{loc}}^{p / 3 - \tmop{var}} ([0, T] ; H^{- 3})$
$\overline{\tmmathbf{P}}$-a.s. Fix an arbitrary $\bar{\omega}$. Then it holds
for every smooth $\varphi$ and every $s, t \in I (\bar{\omega})$ (all random
variables in the sequel are implicitly evaluated at $\bar{\omega}$)
\[ | \bar{u}^{\natural}_{s t} (\varphi) | = \lim_{\varepsilon \rightarrow 0}
   | \bar{u}_{s t}^{\varepsilon, \natural} (\varphi) | \leqslant \| \varphi
   \|_{H^3} \liminf_{\varepsilon \rightarrow 0} \| \bar{u}^{\varepsilon,
   \natural} \|_{C_2^{p / 3 - \tmop{var}} ([s, t] ; H^{- 3})}, \]
which implies
\[ \| \bar{u}_{s t}^{\natural} \|^{p / 3}_{H^{- 3}} \leqslant
   \liminf_{\varepsilon \rightarrow 0} \| \bar{u}^{\varepsilon, \natural}
   \|^{p / 3}_{C_2^{p / 3 - \tmop{var}} ([s, t] ; H^{- 3})} =
   \liminf_{\varepsilon \rightarrow 0}  \bar{\omega}_{\varepsilon, \natural}
   (s, t) . \]
If $\pi$ is partition of $I (\bar{\omega})$, then by Fatou's lemma,
superadditivity of $\bar{\omega}_{\varepsilon, \natural}$ and \eqref{eq:65},
we obtain
\[ \sum_{(s, t) \in \pi} \| \bar{u}_{s t}^{\natural} \|^{p / 3}_{H^{- 3}}
   \leqslant \liminf_{\varepsilon \rightarrow 0} \sum_{(s, t) \in \pi} 
   \bar{\omega}_{\varepsilon, \natural} (s, t) = \liminf_{\varepsilon
   \rightarrow 0}  \bar{\omega}_{\varepsilon, \natural} (\sigma, \tau)
   \leqslant M, \]
hence finally
\[ \| \bar{u}^{\natural} \|^{p / 3}_{C_2^{p / 3 - \tmop{var}} (I ; H^{- 3})} =
   \sup_{\pi \in \mathcal{P} (I)} \sum_{(s, t) \in \pi} \| \bar{u}_{s
   t}^{\natural} \|^{p / 3}_{H^{- 3}} \leqslant M. \]
Accordingly, we deduce that $\bar{u}^{\natural}$ belongs to $C_{2,
\tmop{loc}}^{p / 3 - \tmop{var}} ([0, T] ; H^{- 3})$
$\overline{\tmmathbf{P}}$-a.s., meaning there is a set $\bar{\Omega}_0 \subset
\bar{\Omega}$ of full probability $\overline{\tmmathbf{P}}$ and a random
covering $(I_k)_{k = 1, \ldots, [T / \bar{L}]}$ of $[0, T]$ such that for
every $\bar{\omega} \in \bar{\Omega}_0$ we have $\bar{u}^{\natural}
(\bar{\omega}) \in C_2^{p / 3 - \tmop{var}} (I_k (\bar{\omega}) ; H^{- 3})$
for every $k = 1, \ldots, [T / \bar{L}]$.

Thus, we have proved that \eqref{eq:ulim} is satisfied by $(\bar{u}, \bar{B})$
in the sense of Definition~\ref{d:sol2}. The proof of Theorem~\ref{thm:main}
is therefore complete.

\

\end{document}